\documentclass[12pt]{article}
\usepackage{geometry}
\usepackage{amsthm,amsmath,amsfonts,amssymb,amscd,mathtools}
\usepackage{graphics,caption,fancyhdr,float,xcolor,hyperref}
\usepackage{appendix,enumerate,latexsym,extarrows}
\usepackage[nottoc]{tocbibind}  

\usepackage[scr]{rsfso}
\usepackage{tikz-cd}
\setlength{\unitlength}{1mm}

\allowdisplaybreaks

\newtheorem{thm}{Theorem}[section]
\newtheorem{conj}[thm]{Conjecture}
\newtheorem{lem}[thm]{Lemma}
\newtheorem{prop}[thm]{Proposition}
\newtheorem{crl}[thm]{Corollary}
\theoremstyle{definition}
\newtheorem{rem}[thm]{Remark}

\newtheorem{df}[thm]{Definition}

\newtheorem{example}[thm]{Example}

\newcommand{\ov}{\overline}
\newcommand{\lan}{\langle}
\newcommand{\ran}{\rangle}
\newcommand{\wt}{\widetilde}
\newcommand{\ot}{\otimes}
\newcommand{\hra}{\hookrightarrow}
\newcommand{\<}{\langle}
\renewcommand{\>}{\rangle}
\newcommand{\N}{\mathbb{N}}
\newcommand{\Z}{\mathbb{Z}}
\newcommand{\Zt}{\Z/2\Z}

\newcommand{\C}{\mathbb{C}}
\newcommand{\A}{\mathbb{A}}
\newcommand{\Gm}{\C^*}

\newcommand{\SG}{{\!\mathsf{SG}}}
\newcommand{\MF}{\mathrm{MF}}
\newcommand{\PV}{\mathsf{\!MF}}  
\newcommand{\PVs }{MF\ }  
\newcommand{\Ch}{\mathrm{Ch}}

\newcommand{\cC}{\mathcal{C}}
\newcommand{\cF}{\mathscr{F}}
\newcommand{\cH}{\mathscr{H}}
\newcommand{\cHw}{\cH(\bw,G_\bw)}
\newcommand{\cJ}{\mathscr{J}}

\newcommand{\cL}{\mathcal{L}}
\newcommand{\cM}{\mathscr{M}}
\newcommand{\oM}{\overline{\cM}}
\newcommand{\cN}{\mathcal{N}}
\newcommand{\cO}{\mathcal{O}}
\newcommand{\cQ}{\mathscr{Q}}
\newcommand{\cS}{\mathcal{S}}
\newcommand{\cSr}{\cS}

\newcommand{\bK}{{\boldsymbol{K}}}
\newcommand{\bl}{{\boldsymbol{\ell}}}
\newcommand{\bm}{{\boldsymbol{m}}}
\newcommand{\bo}{{\boldsymbol{\omega}}}
\newcommand{\bp}{{\boldsymbol{p}}}
\newcommand{\bq}{{\boldsymbol{q}}}
\newcommand{\brho}{{\boldsymbol{\rho}}}
\newcommand{\bs}{{\boldsymbol{s}}}
\newcommand{\bt}{{\boldsymbol{t}}}
\newcommand{\bw}{{\boldsymbol{w}}}

\newcommand{\bxi}{{\boldsymbol{\xi}}}

\newcommand{\fC}{\mathfrak{C}}
\newcommand{\fF}{\mathfrak{F}}
\newcommand{\fI}{\mathfrak{I}}
\newcommand{\fS}{\mathfrak{B}}
\newcommand{\de}{\delta}
\newcommand{\Ga}{\Gamma}
\newcommand{\ga}{\gamma}
\newcommand{\gb}{{\boldsymbol\ga}} 
\newcommand{\ogamma}{\overline{\gb}}
\renewcommand{\th}{\theta}
\newcommand{\bth}{{\boldsymbol{\theta}}}
\newcommand{\one}{{\boldsymbol{1}}}
\newcommand{\zb}{\boldsymbol{\zeta}}

\newcommand{\id}{\operatorname{id}}
\newcommand{\Res}{{\operatorname{Res}}}
\newcommand{\soc}{\mathrm{soc}}
\newcommand{\HH}{\operatorname{HH}}

\begin{document}
\title
{A Landau-Ginzburg mirror theorem via matrix factorizations}
\date{}
\author{Weiqiang He, Alexander Polishchuk, Yefeng Shen and Arkady Vaintrob}

\maketitle

\begin{abstract}
For an invertible quasihomogeneous
polynomial  $\bw$ we prove an all-genus mirror theorem
relating two cohomological field theories of the Landau-Ginzburg type.
On the $B$-side it is the Saito-Givental theory for a specific
choice of a primitive form. On the $A$-side, it is the matrix factorization CohFT
for the dual singularity $\bw^T$ with the maximal diagonal symmetry group.
\end{abstract}

{\hypersetup{linkcolor=black}
\setcounter{tocdepth}{2}
\tableofcontents
}

\section{Introduction}
Mirror symmetry, which started with a discovery by theoretical physicists that different
geometric inputs can produce equivalent string theory models, brought spectacular
developments in mathematics. In particular, it predicted that Gromov-Witten invariants of
a Calabi-Yau manifold $X$  (observables in the $A$-model topological strings  on $X$)
depending only on the symplectic structure of $X$ can be expressed in terms of the
$B$-model observables for another Calabi-Yau manifold $Y$ which depend on the complex
structure of $Y$.

To describe the formal structure  of Gromov-Witten invariants, Kontsevich and Manin~\cite{KM}
introduced the notion of a \emph{cohomological field theory} (CohFT) which is a
finite-dimensional vector space $\cH$ (the \emph{state space} of the theory) with a
nondegenerate symmetric pairing and a collection of operations
\begin{equation}
  \Lambda_{g,r}: \cH^{\ot r}\to H^{*}(\oM_{g,r})
\end{equation}
with values in the cohomology of the Deligne-Mumford moduli space $\oM_{g,r}$ of stable curves of
genus $g$ with $r$ marked points. These operations satisfy some natural factorization axioms
(see  Sec.\ \ref{sec:cohft}).
A CohFT is a very rich structure. A substantial part of it
is encoded in numerical invariants called the \emph{correlators}
(see~\eqref{eq:correlators}) which are obtained by intersecting the classes
$\Lambda_{g,r}(h_1,\ldots,h_r)$ with some tautological classes on $\oM_{g,r}$.
In particular, the $g=0, r=3$ correlators turn the state space $\cH$ into a Frobenius
algebra and the collection of all genus-zero correlators equips it with a structure of a
formal Frobenius manifold.

This formalism allows to give a mathematical interpretation of mirror symmetry as a
statement about isomorphism between two CohFTs constructed from different geometric data.
Most of the known mirror symmetry results compare only parts of the structures of the
corresponding CohFTs. For example, we can speak of mirror symmetry at the state space
level, or of Frobenius algebra isomorphisms, or of a genus-zero mirror symmetry.
Complete mirror symmetry results, valid for all genera, are very rare.
In this paper we prove such a theorem for CohFTs corresponding to Landau-Ginzburg models
coming from invertible singularities.

Besides topological strings, another common source of CohFTs
are Landau-Ginzburg models whose geometric input is a holomorphic function
$$\bw:\C^n\to \C$$
with an isolated singularity at the origin.
There are numerous examples of mirror symmetry phenomena involving Landau-Ginzburg
models and theories of Gromov-Witten type.
Most of them deal with the Landau-Ginzburg $B$-model which in various forms existed since 1990s.
The operations~\eqref{eq:cohft} of the corresponding CohFT are constructed using Saito's
theory of primitive forms~\cite{Sai} and Givental's quantization  procedure~\cite{Giv}.

The first mathematical theory of a Landau-Ginzburg $A$-model was constructed much
later by Fan, Jarvis and Ruan~\cite{FJR07,FJR}.
Based on an earlier idea of Witten~\cite{Wit},
these CohFTs became known as FJRW theories.
Their construction
paved the way for a mathematical study of mirror symmetry between different Landau-Ginzburg models
whose existence was earlier suggested by physicists.
An FJRW theory depends on a choice of a \emph{nondegenerate} quasihomogeneous polynomial
function  $\bw:\C^n\to \C$ with an isolated singularity at the origin and an \emph{admissible}
group $G$ of diagonal symmetries of $\bw$
(see Section~\ref{sec:lg-mirror} for details).
An LG/LG mirror symmetry starts with an
 \emph{invertible polynomial}
 \begin{equation}
   \label{eq:invertible_poly}
\bw=\sum_{i=1}^n\prod_{j=1}^nx_j^{a_{ij}}
 \end{equation}
(a nondegenerate quasihomogeneous polynomial on $\C^n$ with exactly $n$ nonzero monomials)
determined by the \emph{exponent matrix}
\begin{equation}  \label{eq:exp-matrix}
E_\bw=(a_{ij}).
\end{equation}
The mirror partner of $\bw$ introduced by Berglund and H\"ubsch~\cite{BH} is the \emph{dual polynomial}
\begin{equation}
 \label{eq:dual_poly}
\bw^T:=\sum_{i=1}^n\prod_{j=1}^nx_j^{a_{ji}}
\end{equation}
with the transposed exponent matrix $E_{\bw^T}=E_\bw^T$.
Later Berglund and Henningson~\cite{BHe} extended this construction to include
admissible groups of symmetries and provided initial evidence supporting the conjecture that the $A$
and $B$ models for  the dual LG pairs $(\bw,G)$ and $(\bw^T,G^T)$ are equivalent.
In~\cite{Kr9,Kr10}  Krawitz established the base case of  this LG/LG mirror symmetry by constructing an
explicit isomorphism between the FJRW $A$-model state space
for $(\bw,G)$ and the $B$-model state space  for the dual pair $(\bw^T,G^T)$.

The FJRW CohFT has been constructed for all admissible pairs $(\bw,G)$, but its $B$-side
counterpart, the Saito-Givental theory, currently is known only for pairs $(\bw^T,G^T)$
with the trivial group $G^T$, i.e.\ when $G=G_\bw$ is the
maximal diagonal symmetry group of $\bw$. Thus, to extend the LG/LG mirror symmetry beyond the level
of Frobenius algebras, we must restrict ourselves to the case $G=G_\bw$.
Over the last two decades this correspondence has been established for several families of
invertible polynomials, starting with the singularities of type $A$~\cite{JKV, FSZ} (in
which case it is equivalent to the generalized Witten's conjecture~\cite{Wit}), then
continuing with singularities of the types $D$ and $E$~\cite{FJR}, simple elliptic
singularities~\cite{KSh, MS}, exceptional unimodal singularities~\cite{LLSS}, and
culminating in the work~\cite{HLSW} which proved LG/LG mirror symmetry for almost all
invertible polynomials.
\begin{thm}[\sf FJRW-SG mirror symmetry~{\cite[Theorem 1.2]{HLSW}}]
 \label{thm-fjrw}
 Let $\bw$ be an invertible polynomial without chain variables of weight $1/2$.  Then
 there exists a primitive form $\zeta$ for the dual polynomial $\bw^T$ and an isomorphism
   \begin{equation}
     \label{eq:kraw-map}
     \th:  \cQ_{\bw^T} \to \cH(\bw,G_\bw),
       \end{equation}
between the Milnor algebra $\cQ_{\bw^T}$ of the singularity $\bw^T$ and the FJRW state space which
identifies the FJRW correlators for $(\bw,G_\bw)$ with the  corresponding correlators of the
Saito-Givental CohFT for the pair $(\bw^T,\zeta)$, for all $g$ and $r$.

In particular, the map $\th$ induces an isomorphism of the corresponding Frobenius manifolds.
 \end{thm}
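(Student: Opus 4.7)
\medskip
\noindent\emph{Proof proposal.} My plan is to establish the all-genus statement via the Givental-Teleman classification of semisimple cohomological field theories, which reduces an all-genus mirror theorem to two ingredients: (i) an isomorphism of the underlying Frobenius manifolds, and (ii) a matching of R-matrices on a common semisimple locus of the deformation space.

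First I would construct $\th$ as a lift of Krawitz's mirror map. Narrow sectors of $\cHw$ correspond under Krawitz's prescription to specific monomial representatives in the Milnor algebra $\cQ_{\bw^T}$, and this should be extended to broad sectors using a basis adapted to the orbifold decomposition of $\cHw$. I would then verify directly that $\th$ intertwines the residue pairing on $\cQ_{\bw^T}$ with the FJRW pairing, recovering Krawitz's isomorphism of Frobenius algebras at the $(g,r)=(0,3)$ level.

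Next I would promote $\th$ to an isomorphism of germs of Frobenius manifolds. On the $B$-side, a choice of primitive form $\zeta$ for $\bw^T$ endows the base of the universal unfolding with a Frobenius manifold structure via Saito's theory; on the $A$-side, the genus-zero FJRW potential for $(\bw,G_\bw)$ equips $\cHw$ with a formal Frobenius manifold structure. Matching the two amounts to computing enough genus-zero FJRW correlators with broad insertions (using WDVV, the string and dilaton equations, concavity of the Witten-type virtual class, and Ramond/Neveu-Schwarz selection rules) to pin down the structure constants, and then selecting $\zeta$ so that its flat coordinates reproduce them. Existence of such a $\zeta$ follows from Saito's theorem on primitive forms; producing it in sufficiently explicit form is where the bulk of the calculation concentrates.

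With the Frobenius manifold isomorphism in hand, I would pass to a generic semisimple point in the universal unfolding and invoke Teleman's uniqueness theorem. Both CohFTs carry Euler vector fields with matching homogeneity data, so the R-matrix of each is determined by the semisimple Frobenius manifold structure via the asymptotic expansion at the irregular singularity of the Dubrovin connection, forcing the two quantizations to coincide for every $(g,r)$. The principal obstacle I anticipate is the control of broad-sector FJRW invariants and the passage off the origin: the state space is not semisimple at $0$, so one must deform into the universal unfolding while retaining analytic dependence of the virtual class on the deformation parameters. The failure of this control for chain variables of weight $1/2$ is precisely why such variables are excluded from the hypothesis, and it is the step at which a direct comparison of the FJRW and Saito-Givental structures meets the most serious technical resistance.
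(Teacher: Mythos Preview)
This theorem is not proved in the present paper; it is quoted from \cite{HLSW} as background (note the attribution ``[HLSW, Theorem 1.2]'' in the statement). The paper's own contribution is the analogous result for the matrix factorization CohFT (Theorem~\ref{main-theorem}), and its proof strategy---Frobenius algebra isomorphism, then WDVV reconstruction to a handful of four-point correlators, then Givental--Teleman plus Milanov's analyticity~\cite{Mi}---is indeed the same template as in \cite{HLSW}. So your outline is broadly the right one, and in that sense matches how this theorem is actually proved in the literature.

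That said, two points in your proposal deserve correction. First, you underplay the reconstruction step: ``computing enough genus-zero FJRW correlators'' is not a matter of accumulating data but of a structured WDVV argument (cf.\ Proposition~\ref{reconstruction} here and its counterpart in \cite{HLSW}) that reduces the entire genus-zero theory to the Frobenius algebra plus specific four-point invariants $\fF_i$, which are then computed by concavity or index-zero techniques. Second, and more substantively, you misdiagnose the role of the weight-$1/2$ hypothesis. The obstruction is not in the semisimplicity/analyticity passage off the origin---that step, handled by Milanov's theorem~\cite{Mi}, works uniformly once the Frobenius manifold isomorphism is in hand. The exclusion of chain variables of weight $1/2$ enters already at the Frobenius \emph{algebra} level: Krawitz's ring isomorphism~\cite{Kr9} is not established in those cases because the FJRW three-point correlators involving the relevant broad generators cannot be computed with available analytic tools (see the discussion around~\eqref{chain-weight-half} in the introduction). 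So the gap sits at $(g,r)=(0,3)$, not in the higher-genus extension. This is precisely what the present paper circumvents by replacing FJRW with the MF CohFT, where those broad three-point correlators become algebraically accessible.
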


The remaining cases are exactly those missing in Krawitz's theorem~\cite[Theorem 4.1]{Kr9} on mirror
symmetry for $(\bw,G_\bw)$ at the Frobenius algebra level,
when the invertible polynomial $\bw$ contains a chain summand of the form
\begin{equation}
\label{chain-weight-half}
\sum_{i=1}^{k-1}x_i^{a_i}x_{i+1}+x_k^2.
\end{equation}
Here $x_k$ is a chain variable of weight ${1\over 2}$.
The main difficulty here is that for such polynomials the FJRW state space
$\cH(\bw,G_\bw)$ contains so-called \emph{broad generators}
whose structure constants cannot be computed using the formal algebraic methods
of~\cite{FJR} or other available tools.
\\[4pt]

In this paper we use a different approach to circumvent this problem and to prove
an Landau-Ginzburg mirror symmetry theorem for all invertible polynomials without exceptions.

In~\cite{PV16} two of the current authors
gave a different construction of a
Landau-Ginzburg $A$-model CohFT with the same input as the FJRW theory.
The main technical tool of the construction is the categories of matrix factorizations and
for this reason we call the corresponding theory the \emph{matrix factorizations} (MF) CohFT.
The MF CohFT has the same state space  $\cH(\bw,G)$ as the FJRW CohFT,
and conjecturally the two theories are isomorphic.
However, because of the analytic difficulties of computing general FJRW
correlators, this conjecture has been verified only in some special cases
(see~\cite{CLL,Gue}).

The algebraic nature of the construction of the MF CohFT
makes it much more amenable for computations. This allowed us to overcome the
difficulties related to the existence of broad generators
and to prove a general Landau-Ginzburg mirror symmetry theorem valid for all invertible
polynomials and for all genera.

The key step of our proof is the following result establishing mirror symmetry
at the level of Frobenius algebras.

\begin{thm}[{\sf Mirror symmetry at the topological level}]
\label{frob-isom}
Let $\bw$ be an invertible polynomial.
There exists a linear map (the \emph{mirror map}
defined by explicit formulas in Definition~\ref{mirror-map})
\begin{equation}
  \label{eq:frob-iso}
  \th: \cQ_{\bw^T}\to \cH(\bw,G_\bw)
\end{equation}
which is an isomorphism of the Frobenius algebra structures on
the state space $\cH(\bw,G_\bw)$ of the MF CohFT and
on the Milnor ring  $\cQ_{\bw^T}$ of the dual polynomial $\bw^T$, the state space of
the Saito-Givental CohFT.
\end{thm}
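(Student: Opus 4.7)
The plan is to exploit the Kreuzer--Skarke classification, which writes any invertible polynomial $\bw$, up to a permutation of variables, as a sum in disjoint variables of Fermat, loop, and chain atomic summands, and to reduce the statement to each atomic case. On the $B$-side the Milnor ring $\cQ_{\bw^T}$ is the tensor product of the Milnor rings of the duals of the atomic summands, with the residue pairing factoring accordingly. On the $A$-side the maximal symmetry group $G_\bw$ decomposes block-diagonally, so the state space $\cH(\bw,G_\bw)$ tensor-decomposes as well; a K\"unneth-type property for the MF CohFT, proved in~\cite{PV16} via orthogonal decompositions of matrix factorization categories, ensures that the Frobenius product and the residue pairing on the $A$-side are compatible with the tensor decomposition. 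It therefore suffices to treat each atomic type separately, the main case being the chain.

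For each atomic type I would define the mirror map $\th$ explicitly on generators (Definition~\ref{mirror-map}). Basis elements of $\cH(\bw,G_\bw)$ are labeled by pairs $(\ga,m)$, where $\ga\in G_\bw$ determines a sector and $m$ is a monomial in the variables fixed by $\ga$; sectors with no fixed variables are \emph{narrow}, the others are \emph{broad}. For Fermat and loop atoms only narrow sectors arise, and $\th$ essentially matches monomial bases of $\cQ_{\bw^T}$ with narrow generators of $\cH(\bw,G_\bw)$, extending Krawitz's formula. For chain atoms, and in particular for the weight-$1/2$ summand~\eqref{chain-weight-half}, broad generators are unavoidable, and $\th$ must be prescribed by explicit closed-form expressions, chosen to track the derivatives of the chain monomials appearing in the Jacobian relations of $\bw^T$.

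The core of the argument is then to check that $\th$ intertwines the two algebra structures, i.e.\ preserves the unit, the residue pairing, and the product. The pairing match reduces in each atomic case to a direct comparison of residues with the explicit pairing formulas for the MF CohFT from~\cite{PV16}. For the product I would compute the genus-$0$ three-point MF correlators using the trace/Chern-character description of the MF construction: unlike its FJRW analogue, this description is purely algebraic and reduces each structure constant to a finite-dimensional trace of a composition of Koszul-type morphisms in the equivariant matrix factorization category. The $G_\bw$-grading and its associated selection rules will kill most correlators on sight; the nontrivial ones should be evaluated and matched term-by-term with polynomial products in $\cQ_{\bw^T}$ modulo the Jacobian ideal of $\bw^T$.

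The main obstacle is exactly the obstacle that blocked the FJRW-based approach: chain summands containing a weight-$1/2$ variable, where broad--broad--broad and broad--broad--narrow correlators contribute nontrivially, and where the FJRW formal algebraic methods of~\cite{FJR} give no access to these structure constants. The whole point of working with the MF CohFT rather than FJRW is that these correlators become computable matrix-factorization traces; the hard work is to carry out those computations for each weight-$1/2$ chain and to recognize the output as the image under $\th$ of the appropriate product in $\cQ_{\bw^T}$. Once this is done, the unit axiom is immediate from the normalization in Definition~\ref{mirror-map}, and the reduction to atomic types in the first paragraph then yields the theorem in full generality.
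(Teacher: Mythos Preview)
Your overall strategy---reduce to atomic types via Kreuzer--Skarke, then check that the explicit mirror map $\th$ of Definition~\ref{mirror-map} preserves the pairing and the product---matches the paper's approach exactly. The paper's proof of Theorem~\ref{ring-iso} cites the tensor decomposition on the $A$-side from~\cite[Theorem~5.8.1]{PV16} and then invokes the pairing computation (Proposition~\ref{pairing-preserving}), the generator product formula $\th_j\bullet\th(\bm)=\th(\bm+v_j)$ (Proposition~\ref{additive-relation}), and the verification of the Jacobian relations in $\cHw$ (Proposition~\ref{injectiveness}).

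There is, however, a genuine factual error in your plan. You write that for Fermat and loop atoms ``only narrow sectors arise.'' This is false: a loop polynomial in an \emph{even} number of variables has a two-dimensional broad sector at $\gb=1\in G_\bw$, spanned by the Chern characters $\Ch(K_{\rm odd})$ and $\Ch(K_{\rm even})$ of two explicit Koszul matrix factorizations (see~\eqref{chern-odd}--\eqref{loop-metric} and Example~\ref{example-loop}). The mirror map sends the special monomials $x^{\bm^{\rm odd}}$ and $x^{\bm^{\rm even}}$ to these broad classes, and the corresponding three-point correlators with broad insertions must be computed explicitly (Case~3 of Proposition~\ref{additive-relation} and Case~(iii)(2) of Proposition~\ref{injectiveness}). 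If you proceed believing loops are purely narrow, your argument will miss these computations and the map $\th$ will not be shown to be a ring homomorphism in the even-loop case.

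A second, softer point: your description of the broad-correlator computations as ``finite-dimensional traces of compositions of Koszul-type morphisms'' is too loose to carry the argument. The paper does not compute these correlators by direct trace evaluation; it uses a structured reduction (Proposition~\ref{P622-prop}, Corollary~\ref{P622-cor}, and the chain-type reduction Proposition~\ref{chain-reduction-prop}) that identifies the relevant fundamental matrix factorization as a regular Koszul one, reduces the functor $R\Ga\circ\iota^*$ to a pairing on a smaller fixed locus, and then evaluates via the residue formula~\eqref{mukai-pairing}. Also note that broad sectors occur for \emph{every} chain atom (whenever $\bm\in\fS^{k\ge 1}_{\bw^T_{\rm chain}}$, Lemma~\ref{chain-broad}), not just those with a weight-$1/2$ variable; the latter is the obstruction specific to the FJRW approach, whereas the MF computations here treat all chain broad cases uniformly.
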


Theorem~\ref{frob-isom} serves as a starting point for the proof of our main
result, which can be seen as an improvement of Theorem~\ref{thm-fjrw}.
\begin{thm}
[{\sf MF-SG Mirror Symmetry}]
\label{main-theorem}
Let $\bw$ be an invertible polynomial. Then for a specific choice of a
primitive form $\zeta$ for the dual polynomial $\bw^T$, the mirror map~\eqref{eq:frob-iso}
identifies the correlators~\eqref{eq:correlators} of
the MF CohFT
with the  corresponding correlators of the  Saito-Givental CohFT
for all $g$ and $r$.
\end{thm}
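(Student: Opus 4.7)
The strategy is to reduce the all-genus statement to a genus-zero statement via Givental--Teleman reconstruction. A homogeneous, generically semisimple CohFT is uniquely determined by its underlying Frobenius manifold together with its Euler field; once the two sides are matched as conformal Frobenius manifolds under the mirror map $\th$, the identification of higher-genus correlators follows automatically from the uniqueness of the $R$-matrix quantization. Hence the real task is to upgrade Theorem~\ref{frob-isom}, which gives an isomorphism of Frobenius algebras at the origin, to an isomorphism of full Frobenius manifolds.

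First I would assemble both sides into Frobenius manifolds. On the $B$-side, once a primitive form $\zeta$ is fixed, Saito's theory produces a Frobenius manifold structure on the base of a miniversal unfolding of $\bw^T$ with tangent space $\cQ_{\bw^T}$ at the origin. On the $A$-side, the MF CohFT of~\cite{PV16} equips $\cH(\bw,G_\bw)$ with a formal Frobenius manifold via its genus-zero three-point bulk-deformed correlators. I would then match Euler fields: the Euler field on each side is determined by the quasihomogeneous weights of $\bw$ (respectively $\bw^T$), and the duality between $\bw$ and $\bw^T$, together with the bidegree-preserving features of the mirror map of Definition~\ref{mirror-map}, should make $\th$ intertwine them. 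With a canonical primitive form $\zeta$ chosen so that its leading value recovers the pairing used on the $A$-side, the reconstruction theorem for semisimple conformal Frobenius manifolds rigidifies the structure, so the Frobenius algebra isomorphism of Theorem~\ref{frob-isom} extends uniquely to the deformations.

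Next I would establish generic semisimplicity of both theories. On the $B$-side this follows from the generic Morsification of $\bw^T$ inside its miniversal family; on the $A$-side semisimplicity is then transported across $\th$ once the Frobenius manifold isomorphism is in place. Applying Teleman's reconstruction at a semisimple point identifies the full $R$-matrices, and the homogeneity (matching of Euler fields) propagates the identification to the entire theory; in particular all correlators in $H^*(\oM_{g,r})$ match for every $g$ and $r$.

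The main obstacle is precisely the upgrade from Frobenius algebra to Frobenius manifold, i.e.\ showing that $\th$ extends compatibly with the bulk deformations. A priori the deformed theories need not agree beyond the tangent space at the origin, and matching them reduces to a comparison of higher-order genus-zero data. Two approaches are available: a direct algebraic computation of the relevant MF genus-zero correlators, feasible precisely because the matrix-factorization construction is algebraic (the advantage over FJRW emphasized by the authors, and what allows the treatment of the chain summands \eqref{chain-weight-half} that produce broad generators in $\cH(\bw,G_\bw)$); or a conceptual rigidity argument in which the Euler field plus the explicit normalization of $\zeta$ force the Frobenius manifold extension to be unique. I would expect the proof to combine both: an explicit choice of $\zeta$ for which the flat coordinates on the $B$-side are constrained to agree with the $\th$-image of the MF deformation directions, together with an algebraic verification of the three-point functions that exploits the matrix-factorization formalism to tame the broad-generator contributions which had blocked the FJRW approach in the excluded cases of Theorem~\ref{thm-fjrw}.
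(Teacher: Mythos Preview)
Your overall architecture is right—reduce to genus zero and then invoke higher-genus reconstruction—but there is a genuine gap in the middle step, and the higher-genus step needs an ingredient you do not mention.

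The rigidity claim that ``the Euler field plus the explicit normalization of $\zeta$ force the Frobenius manifold extension to be unique'' is not correct: a conformal Frobenius algebra at the origin does not determine the full Frobenius manifold, even generically semisimple. The paper instead uses the WDVV equations together with the ring structure of Theorem~\ref{frob-isom} and the nonvanishing constraints of Proposition~\ref{nonvanishing} to show (Proposition~\ref{reconstruction}) that the entire genus-zero prepotential on each side is determined by the Frobenius algebra plus a short explicit list of \emph{four}-point correlators $\fF_i=\langle \th_i,\th_i,\th_{i-1}\th_i^{a_i-2},\th_{J^{-1}}\rangle_{0,4}$. These are then computed directly (Proposition~\ref{mirror-invariant-prop}) and shown to equal $-q_i$, matching the $B$-side values obtained in~\cite{HLSW} from the perturbative primitive form of~\cite{LLS}. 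Your plan to verify ``three-point functions'' only recovers the Frobenius algebra, which is already Theorem~\ref{frob-isom}; the new content is the four-point computation, and for the nonconcave cases (precisely the broad-generator situations you flag) it requires Gu\'er\'e's virtual class formula~\eqref{guere-formula}, not an abstract uniqueness principle.

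For higher genus, Givental--Teleman applies only on the semisimple locus, but the origin of the deformation space—where the mirror map $\th$ and the correlators are defined—is not semisimple. The paper closes this by invoking Milanov's analyticity result~\cite[Theorem~1.1]{Mi}, which shows that the Givental--Teleman higher-genus potential for an isolated singularity extends uniquely across the non-semisimple locus to the origin. Without this step, an isomorphism of Frobenius manifolds does not by itself force equality of correlators at the base point.
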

The proof of the theorem is derived from the mirror symmetry at the Frobenius
algebra level in two stages.
First we use the reconstruction techniques developed in~\cite{HLSW} and
computational tools
from~\cite{PV16, Gue} to prove that for a certain
primitive form $\zeta$ the map $\th$ is an isomorphism of Frobenius manifolds
for the two CohFTs.
Then we obtain the equality of $g>0$ correlators using the result of Milanov~\cite[Theorem
1.1]{Mi} that Givental-Teleman's~\cite{Giv, Tel} higher genus formulas for semisimple
Frobenius manifolds corresponding to isolated singularities uniquely extend to the
origin of the deformation space.

\subsection*{Plan of the paper}
This paper is organized as follows.
In Section~\ref{sec-prelim} we review the preliminaries and fix notation. We start by
reminding the basic notions related to CohFTs, isolated singularities and quasihomogeneous
polynomials.  Then we review the ingredients of two CohFTs related to singularities: the
Saito-Givental CohFT  and the matrix factorizations CohFT constructed in~\cite{PV16}.

In Section~\ref{sec-frob-alg} we compute the three-point correlators
$\langle \alpha,\beta,\gamma \rangle^\PV_{0,3}$  which determine the ring structure of
$\cH(\bw, G_\bw)$.
Then we construct the mirror map $\th$~\eqref{eq:frob-iso} and prove that it is an
isomorphism of Frobenius algebras, thus establishing
the mirror symmetry  at the topological
level (Theorem~\ref{frob-isom}).

In Section~\ref{sec-frob-mfd}, in Proposition~\ref{nonvanishing}, we find combinatorial
conditions for the nonvanishing of genus-zero $r$-point MF correlators.
Using it and the WDVV associativity relations we show in Proposition~\ref{reconstruction}
that the Frobenius manifold structure of the MF CohFT can be reconstructed from some special genus-zero four-point correlators.
Then we compute these correlators in Proposition~\ref{mirror-invariant-prop}
and use the results to identify the Frobenius manifolds of the two CohFTs.
This establishes the main result for $g=0$, which in turn implies our general mirror
symmetry Theorem~\ref{main-theorem}.

Our computations of the MF correlators
for invertible polynomials without weight $1/2$ chain variables match (up to a sign) the
ones performed in~\cite{HLSW}.
This, together with Krawitz's theorem~\cite[Theorem 4.1]{Kr9}, implies that for such polynomials
the FJRW and MF CohFT correlators agree for all $g$ and $r$ thus showing that our
Theorem~\ref{main-theorem} is an extension of Theorem~\ref{thm-fjrw}.

In the appendix~\ref{sec-appendix} we provide a
proof of the reconstruction theorem for polynomials of the chain type.
This proof is slightly shorter than a similar proof given in in~\cite{HLSW} and also provides the proof for the cases~\eqref{chain-weight-half} not considered there.

\subsection*{Acknowledgments}
W.H. and Y.S. would like to thank J\'er\'emy Gu\'er\'e,   Changzheng Li, Si Li,
Kyoji Saito, and Rachel Webb for helpful   discussions on mirror symmetry of LG models.
Y.S. thanks Marc Krawitz who, in a conversation around 2011, brought up an idea of using
matrix factorizations to study his mirror Frobenius algebras~\cite{Kr9}. We also thank
IHES (A.P.) and IMS, ShanghaiTech University (Y.S.) for hospitality and support.
A.P.\ was partially supported by NSF grant DMS-1700642 and Russian Academic Excellence Project 5-100. Y.S.\ was partially supported by the Simons Collaboration Grant 587119.

\section{Preliminaries}
\label{sec-prelim}
In this section we review some necessary facts and constructions about singularities of
functions and cohomological field theories of Landau-Ginzburg type.

\subsection{Cohomological field theories}
\label{sec:cohft}
Let $\oM_{g,r}$ be the Deligne-Mumford moduli space of stable curves of genus
$g$ with $r$ marked points.
Recall that a  ($\C$-valued) \emph{cohomological field theory  (CohFT) with a unit}
consists of
\begin{itemize}
\item
\emph{the state space}  $\cH$,  a finite-dimensional $\Zt$-graded complex vector space;
\item
\emph{the metric} $\langle \cdot, \cdot \rangle: \cH\otimes \cH\to \C,$ a nondegenerate even
  symmetric pairing;
\item  \emph{the unit}, a distinguished element $\one\in \cH$,
\item
\emph{the operations}, a collection of multilinear $S_r$-equivariant even maps
\begin{equation}
  \label{eq:cohft}
  \Lambda_{g,r}: \cH^{\ot r}\to H^{*}(\oM_{g,r})
\end{equation}
for each $g,r\ge 0$ with $2g+r > 2$
with values in the cohomology of $\oM_{g,r}$.
\end{itemize}

These ingredients are required to be compatible with the natural gluing
and forgetful maps
$$\oM_{g,r+1}\times \oM_{g',r'+1}\to \oM_{g+g',r+r'}, \qquad  \oM_{g,r+2}\to \oM_{g+1,r},
\qquad \oM_{g,r+1}\to \oM_{g,r}.$$

To each CohFT there corresponds a collection of \emph{correlators}, the numerical
invariants given by
\begin{equation}
\label{eq:correlators}
\int_{\oM_{g,r}}\!\!\Lambda_{g,r}(\alpha_1,\ldots,\alpha_r) \prod_{j=1}^r\psi_j^{\ell_j}.
\end{equation}
Here
$\alpha_1,\ldots,\alpha_r\in\cH$
and
$\psi_j=c_1(L_j)$ is the Chern class of the $j$th tautological
line bundle whose fiber is the cotangent line at the $j$th marked point.
Correlators without psi-classes
\begin{equation}
\label{eq:primary}
\langle\alpha_1,\cdots,\alpha_r\rangle_g:=
\int_{\oM_{g,r}}\!\!\Lambda_{g,r}(\alpha_1,\ldots,\alpha_r),
\end{equation}
 (i.e.\ with $\ell_j=0$ for all $j$)  are called
 \emph{primary}.
 Elements $\alpha_j\in\cH$  appearing in  a primary
correlator are called \emph{insertions}.
The \emph{prepotential} of a CohFT $\Lambda$ is the exponential generating function of
its genus-zero primary correlators
\begin{equation}
\label{eq:prepot}
\cF_0(t_1,\ldots,t_n):=
\sum_{r \geq 0} \frac{1}{r!}\left\langle
\bxi\cdot\bt,   \ldots, \bxi\cdot\bt
\right\rangle_{\!0}
=
\sum_{r \geq 0} \sum_{i_1,\ldots,i_r}
\frac{
\langle
\xi_{i_1}, \ldots, \xi_{i_r} \rangle_0}{r!}
t_{i_1} \cdots  t_{i_r},
 \end{equation}
 where $t_1,\dots,t_n$ are formal variables,
 $(\xi_1,\ldots,\xi_n)$ is a basis of the  space $\cH$, and
 $\displaystyle\bxi\cdot\bt=\sum_{j=1}^n\xi_jt_j $.

The prepotential satisfies the so-called
WDVV or the \emph{associativity equation}
which is equivalent to saying that
it equips the state space $\cH$ with a structure of a formal Frobenius manifold (see e.g.~\cite{Dub,Man}).
In particular, genus-zero cohomological degree zero
components $\Lambda^0_{0,r}$ of the CohFT
operations~\eqref{eq:cohft} define on $\cH$ a structure of a two-dimensional topological quantum field
theory (TQFT) or, equivalently, a Frobenius algebra structure with the identity
$\one\in\cH$ and the multiplication
$$
\bullet: \cH\otimes \cH\to \cH
$$
determined by the three-point genus-zero correlators
\begin{equation}
\label{eq:froben}
\langle \alpha\bullet \beta,\gamma  \rangle=\langle \alpha, \beta, \gamma\rangle_0.
\end{equation}
In particular
$$
\langle \alpha, \beta \rangle=\langle \one, \alpha, \beta\rangle_0.
$$

By the Kontsevich-Manin reconstruction theorem~\cite{KM}, the prepotential
$\cF_0$ uniquely determines the genus-zero maps $\Lambda_{0,r}$ of a CohFT.
Moreover, by the results of Givental and Teleman~\cite{Giv, Tel} we know that
if the commutative algebra given by~\eqref{eq:froben} is semisimple then the prepotential
uniquely determines the entire CohFT, i.e.\ the maps $\Lambda_{g,r}$ for all $g$ and $r$.

\subsection{Saito-Givental CohFT (Landau-Ginzburg $B$-model)}

The \emph{Saito-Givental} CohFT (Landau-Ginzburg $B$-models in the physical language)
takes as the input a germ of a holomorphic function
$$\bw:\C^n\to \C$$
with an isolated singularity at the origin.
The state space
of this theory with the corresponding
 algebra structure
is the \emph{Milnor ring} (or the \emph{local algebra}) of the singularity  $\bw$
\begin{equation}
  \label{eq:milnor}
 \cQ_\bw
  :=\C[[x_1,\ldots,x_n]]/\cJ_\bw,
\end{equation}
where
\begin{equation} \label{eq:jac_ideal}
\cJ_\bw:=\langle \partial_1\bw,\ldots,\partial_n\bw \rangle
\end{equation}
is the \emph{Jacobian ideal}
generated by the partial derivatives
$\displaystyle\partial_j\bw=\frac{\partial\bw}{\partial x_j}$ of $\bw$.
The algebra $\cQ_\bw$ is finite-dimensional precisely when $\bw$ has isolated singularity. Its
dimension $\mu_\bw:=\dim \cQ_\bw$ is called the \emph{Milnor number} of the singularity $\bw$.
In what follows, we will refer to the generators
$\partial_j\bw=0$ of  $\cJ_\bw$ as the \emph{Jacobian relations}.

The metric
on the space $\cQ_\bw$ is obtained by identifying it with the space
\begin{equation}\label{Hw-def-eq}
 H(\bw):=\Omega^n(\C^n)/\left( d\bw\wedge\Omega^{n-1}(\C^n)\right)
\end{equation}
 via
 \begin{equation}
   \label{eq:milnor-forms}
   \cQ_\bw \xlongrightarrow{\sim}H(\bw), \quad f\mapsto f\bo,
\text{\ \ where\ \ } \bo=dx_1\wedge \cdots\wedge dx_n,
    \end{equation}
and using the \emph{Grothendieck residue pairing} on $H(\bw)$
\begin{equation}
  \label{eq:groth-res}
\langle f, g \rangle = (f\bo, g\bo):= \Res_\bw(f\!g\bo),
\end{equation}
where
\begin{equation}
\label{residue-formula}
\Res_\bw(f \bo)=\Res_{\C[x]/\C}
\begin{bmatrix}
f(x)\cdot \bo\\
\partial_1\bw, \ldots, \partial_n\bw
\end{bmatrix}.
\end{equation}
The Frobenius manifold structure on $\cQ_\bw$
(i.e.\ the genus-zero part of the CohFT)
constructed by Saito~\cite{Sai,Sai2,Sai3}
depends on a choice of a \emph{primitive} form $\zeta$ on $\cQ_\bw$.
The higher genus maps~\eqref{eq:cohft} $\Lambda^\SG(\bw,\zeta)$ are obtained by applying
Givental's quantization procedure extended in~\cite{Sha,Tel} to the CohFT level.

\subsection{Admissible Landau-Ginzburg pairs and FJRW theory}
\label{sec:fjrw}
While the Saito-Givental CohFT is defined for any isolated singularity $\bw$ (and
requires a choice of a primitive form), $A$-model LG CohFTs are known only for a
quasihomogeneous $\bw$ together with a special group of symmetries.

Recall that a polynomial function $\bw: \C^n\to \C$ is called \emph{quasihomogeneous}
if there exists a collection of positive rational numbers $q_1,q_2,\ldots, q_n$,
called \emph{weights}, such that
for all $\lambda\in \C$ we have
\begin{equation}
  \label{eq:weights}
\bw(\lambda_1^{q_1}x_1,\ldots,\lambda_n^{q_1}x_n)=\lambda\bw(x_1,\ldots,x_n).
\end{equation}
Recall some facts about quasihomogeneous singularities
(see e.g.~\cite[Section 12.3]{AGV}).
\begin{itemize}
\item
  The Milnor number of $\bw$ is given by
\begin{equation}
\label{milnor-number}
\mu_\bw:=\dim\cQ_\bw=\prod_{j=1}^n\left( \frac{1}{q_j}-1\right).
\end{equation}

\item The \emph{socle} of the algebra $\cQ_\bw$ (the subspace of elements of the highest
degree  with respect to the grading induced by the  weights $q_j$) is one-dimensional
and is spanned by the
\emph{socle element}
  \begin{equation}
   \label{eq:hess}
   \mathrm{Hess}(\bw):=
   \det\left( \frac{\partial^2\bw}{\partial x_i\partial x_j} \right)\in \cQ_\bw.
  \end{equation}

\item The degree of $\mathrm{Hess}(\bw)$
(the  \emph{central charge} of the theory) is equal to
  \begin{equation}
    \label{eq:centr-charge}
    \widehat{c}:=n-2\sum_{j=1}^nq_j=\sum_{j=1}^n(1-2q_j).
  \end{equation}

\item We have the following relation (see~\cite[Eq.\ (4.25)]{PV12})
\begin{equation}
\label{hessian-residue}
\Res_\bw (\mathrm{Hess}(\bw)\cdot\bo) =\mu_\bw
\end{equation}
which allows to compute the pairing~\eqref{eq:groth-res} by looking at the highest
degree component of the product $fg$.
\end{itemize}

A quasihomogeneous polynomial is called \emph{nondegenerate} if there is a unique choice
of weights satisfying~\eqref{eq:weights}.
The \emph{group of diagonal symmetries}
\begin{equation}
  \label{eq:diag}
  G_\bw:=\{(\ga_1,\ldots,\ga_n)\in (\C^*)^n \,|\,
  \bw(\ga_1x_1,\ldots,\ga_nx_n)=\bw(x_1,\ldots,x_n)\}
\end{equation}
for a quasihomogeneous $\bw$ is nontrivial and contains
the {\em exponential grading element}
\begin{equation}
\label{exponential-grading}
J_\bw:=  \left(e^{2\pi iq_1}, \ldots, e^{2\pi i q_n}\right).
\end{equation}
If $\bw$ is a nondegenerate polynomial, then the group $G_\bw$ is finite.
A subgroup  $G\subset G_\bw$ is called \emph{admissible} if it contains the
element $J_\bw$.
\begin{df}
A pair  $(\bw, G)$, where $\bw$ is a nondegenerate quasihomogeneous singularity and $G$ is
an admissible subgroup of $G_\bw$ is called an \emph{admissible LG pair}.
\end{df}

In~\cite{FJR07,FJR} Fan, Jarvis and Ruan constructed for every admissible LG pair $(\bw,
G)$ a CohFT with the state space
 \begin{equation}
   \label{eq:fjr}
  \cH(\bw,G):=\bigoplus_{\gb\in G}H(\bw_\gb)^G,
 \end{equation}
 where $\bw_\gb:=\bw|_{V^\gb}$ is the restriction of $\bw$ to the fixed subspace
 $V^\gb:=(\C^n)^\gb$  of $\gb\in G$, and
 the spaces $H(\bw_\gb)$ are defined in~\eqref{Hw-def-eq}.
Notice that, even though $H(\bw_\gb)$ and $\cQ_{\bw_\gb}$ are isomorphic as vector spaces,
the actions of $G$ on them  are not the same and so the invariant subspaces may be
different.
This CohFT is called the FJRW theory since its construction is based on an analysis
of the so-called  Witten equation.

An element  $\gb\in G$ with  a trivial fixed subspace
$V^\gb=0$ is called \emph{narrow}. Elements $\gb\in G$ with $V^\gb\ne0$ are called \emph{broad}.
The summands $H(\bw_\gb)^G  $ of $ \cH(\bw,G)$ are called \emph{sectors} and elements of a sector
corresponding to a broad (resp.~narrow) $\gb\in G$ are called \emph{broad} (resp.~\emph{narrow}).
Correlators~\eqref{eq:primary} $\langle\alpha_1,\cdots,\alpha_r\rangle_g$
with only narrow insertions can be calculated using CohFT formalism and algebro-geometric tools
described in~\cite{FJR07}. However, computations involving broad elements often lead to
insurmountable analytic difficulties and even the three-point correlators defining the FJRW  Frobenius
algebra structure~\eqref{eq:froben} on  $\cH(\bw,G)$ are not known in general.

\subsection{Matrix Factorizations CohFT}
\label{sec:mf-cohft}

In~\cite{PV16} the second and the fourth authors constructed a different $A$-model
Landau-Ginzburg CohFT whose input is also an admissible pair $(\bw, G)$ as in the FJRW theory.
The construction is based on the study of categories of matrix factorizations and is purely
algebraic in nature. We will refer to it as
the \emph{matrix factorization} (MF) CohFT.
Conjecturally, the MF and FJRW CohFTs for the same admissible pair $(\bw,G)$ are equivalent.
Due to technical difficulties of computing FJRW correlators, this
conjecture has been verified only in special cases.
In particular, it is known that the state spaces and the metrics of both theories coincide.
However in the MF CohFT the state space is defined not geometrically,
as a space of Lefschetz thimbles as in the FJRW theory,
but algebraically as the \emph{Hochschild homology of the differential-graded category
  of equivariant matrix factorizations} with the canonical metric.

Let us review the elements of this construction which we will need below.

\subsubsection{The state space and metric of the MF CohFT}

Let $\bw\in\C[x_1,\ldots,x_n]$ be a quasihomogeneous polynomial with an isolated singularity at $0$.
The group $G_\bw$ of diagonal symmetries of $\bw$ (defined by \eqref{eq:diag}) is contained in
the bigger algebraic group $\Ga_\bw$ of diagonal transformations of $\C^n$
preserving $\bw$ up to a scalar. The group $\Ga_\bw$
is equipped with a natural character
\begin{equation}
  \label{eq:char}
  \chi:\Ga_\bw\to \Gm
\end{equation}
 such that $\ker(\chi)=G_\bw$.

Now given any commutative algebraic group $\Ga$ with a homomorphism $\Ga\to \Ga_\bw$
denote by $\chi$ the induced character $\chi:\Ga\to \Gm$ with the kernel $G:=\ker(\chi)$.
Let $\MF_\Ga(\bw)$ be the dg-category
of $\Ga$-equivariant matrix factorizations of $\bw$.
By definition, such a matrix factorization $\ov{E}=(E,\de_E)$ consists of
a $\Zt$-graded $\Ga$-equivariant free $\C[x_1,\ldots,x_n]$-module of finite rank, $E=E_0\oplus
E_1$, together with $\Ga$-equivariant
module maps
$$
\de_1:E_1\to E_0, \de_0:E_0\to E_1\ot \chi, \ \text{ such that } \de_0\de_1=\bw\cdot\id,
\de_1\de_0=\bw\cdot\id.
$$

Assuming that the group $G=\ker( \chi)$ is finite,
the Hochschild homology of the dg-category $\MF_\Ga(\bw)$
has been shown  in~\cite[Eq.\ (2.11)]{PV16} to be
\begin{equation}
\label{hochschild-MF-eq}
\HH_*(\MF_\Ga(\bw))\simeq\bigoplus_{\gb\in G}H(\bw_\gb)^G,
\end{equation}
where the space $H(\bw)$ is given by~\eqref{Hw-def-eq}.
The space $\HH_*(\MF_\Ga(\bw))$ is naturally a module over the dual group $\widehat{G}$,
and the decomposition \eqref{hochschild-MF-eq} is precisely the decomposition into
isotypical components (where elements of $G$ are viewed as characters of $\widehat{G}$).
Furthermore, the map on Hochschild homology induced by the forgetful functor $\MF_\Ga(\bw)\to
\MF(\bw)$ is given by the projection
onto the sector of $\gb=1$ in~\eqref{hochschild-MF-eq}
\begin{equation}
  \label{eq:projection}
\HH_*(\MF_\Ga(\bw))\to H(\bw)^G\subset H(\bw)=\HH_*(\MF(\bw)).
\end{equation}
For each $\Ga$-equivariant matrix factorization $\ov{E}=(E,\de_E)$ of $\bw$, there is a
categorical Chern character $\Ch_G(\ov{E})$ with values in $\HH_*(\MF_\Ga(\bw))$. It is
calculated in~\cite[Eq.\ (3.17)]{PV12}, in terms of the above identification of
the Hochschild homology. In particular, its component in $H(\bw)^G$, which coincides
with the non-equivariant Chern character of $\ov{E}$, is given by
\begin{equation}
\label{super-trace}
\Ch(\ov{E})=\mathrm{str}(\partial_n\de_E\cdots\partial_1\de_E) \cdot \bo \in
\cQ_\bw \cdot \bo=H(\bw)
\end{equation}
(here $\mathrm{str}$ denotes the supertrace of an endomorphism of a $\Zt$-graded vector bundle).

The Hochschild homology $\HH_*(\MF_\Ga(\bw))$
is equipped with a {\em canonical bilinear form}
given by a general categorical construction (see~\cite[Def.\ 2.7.1]{PV16}).
The decomposition \eqref{hochschild-MF-eq} is orthogonal with respect to this form
which was explicitly computed in \cite[Sec.\ 2.7]{PV16}.
Here we will only need the form induced on $H(\bw)^G$ by the projection~\eqref{eq:projection}:
\begin{equation}
\label{mukai-pairing}
\langle f\bo, g\bo \rangle_{\bw}=(-1)^{n\choose 2}\Res_{\bw}(fg \bo),
\end{equation}
where $\Res_{\bw}$ is given by \eqref{residue-formula}.

For an admissible pair $(\bw, G)$, the state space of the \PVs
CohFT coincides with the state space~\eqref{eq:fjr} of the FJRW theory.
Comparing with~\eqref{hochschild-MF-eq}, we see that this space
coincides with the Hochschild homology of the category $\MF_\Ga(\bw)$.
However, in~\cite{PV16} it appears, from the identification
\begin{equation}\label{reduced-HH-eq}
\HH_*(\MF_\Ga(\bw))\ot_R \C \simeq H(\bw)^G,
\end{equation}
as the direct sum of specializations of Hochschild homology spaces
$$\cH(\bw,G)=\bigoplus_{\ga\in G}\HH_*(\MF_{\Ga}(\bw_\ga))\ot_R \C.$$
Here $R=\C[\widehat{G}]$ is the character ring of $G$ acting on $\C$ via the
specialization at $1\in G$ homomorphism $\pi_1:R\to \C$.

Denote by $\cH_\gb:=H(\bw_\gb)^G$ the sector in $\cH(\bw,G)$ corresponding to $\gb\in G$
and by $\lan\cdot,\cdot\ran_{\bw_\gb}$ the pairing~\eqref{mukai-pairing} for the function $\bw_\gb$.

Let
 \begin{equation}\label{zeta-eq}
  \zb=(e^{\pi i q_1},\ldots,e^{\pi i q_n}) \in (\Gm)^n
 \end{equation}
be a special square root  of the exponential grading element $J_\bw \in
G_{\bw}$~\eqref{exponential-grading}.

We equip the state space $\cH(\bw,G)$ with the metric $\lan\cdot,\cdot\ran$ which pairs
the sectors $\cH_\gb$ and $\cH_{\gb^{-1}}$ as follows:
\begin{equation}
\label{PV-pairing}
\lan x_\gb,y_{\gb^{-1}}\ran:= \lan \zb_*x_\gb, y_{\gb^{-1}}\ran_{\bw_\gb},
\end{equation}
where $x_\gb\in \cH_\gb$, $y_{\gb^{-1}}\in \cH_{\gb^{-1}}$.

\subsubsection{Operations of the MF CohFT}
Let us provide some details of the construction of the MF CohFT which will be needed later.
Below we will only consider the case when the group $G$ is the maximal diagonal symmetry
group $G_\bw$ of $\bw$.

The main geometric ingredient of the theory is the collection of moduli spaces $\cSr_g(\ogamma)$
for $g\ge 0$ and $\ogamma=(\gb_1,\ldots,\gb_r)\in G_\bw^r$.
These spaces parametrize rigidified $\Ga_\bw$-spin structures over stacky $r$-pointed
stable curves of genus $g$.
Roughly speaking, such structure is a principal $\Ga_\bw$-bundle $P$ on a curve $\cC$
together with an isomorphism of $\chi_*P$ with the $\Gm$-torsor corresponding to
$\omega^{\log}_\cC$, where $\chi$ is the character~\eqref{eq:char}.
The embedding $\Ga_\bw\subset (\Gm)^n$ associates with a $\Ga_\bw$-spin structure $n$
line bundles $\cL_1,\ldots,\cL_n$ on $\cC$.
For $\bw=x_1^p$ the corresponding line bundle $\cL_1$ is a $p$th root of the bundle
$\omega^{\log}_\cC$. So the notion of a $\Ga_\bw$-spin structure generalizes higher spin
structures~\cite{JKV}.

It is known that the moduli space $\cSr_g(\ogamma)$ is non-empty only when
\begin{equation}
  \label{eq:nonempty}
\gb_1\cdot\ldots\cdot \gb_r=J_\bw^{2g-2+r}.
\end{equation}
For a $\Ga_\bw$-spin curve  $\cC$, consider the map
$$ \rho:\cC\to C $$
to the partial coarse moduli space $C$ obtained by forgetting the stacky structure on
$\cC$ at the marked points. This map gives a line bundle
$$L_j=\rho_*\cL_j$$
on $C$ whose degree is given by the
formula  (see~\cite[Proposition 3.3.1]{PV16} and also~\cite[Proposition 2.2.8]{FJR})
 \begin{equation}\label{deg-L-eq}
 \deg L_j=(2g-2+r)q_j-\th^{(j)}_{\gb_1}-\ldots-\th^{(j)}_{\gb_r},
 \end{equation}
 where rational numbers $\th^{(k)}_{\gb_j}$, $k=1,\ldots,n$,  are given by
 $$ \gb_j=\big(e^{2\pi i\th_{\gb_j}^{(1)}},
 \ldots,e^{2\pi i\th_{\gb_j}^{(n)}}\big) \in (\C^*)^n,  \mathrm{\ with\ } 0\le \th_{\gb_j}^{(k)}<1.$$

This  formula implies the following useful \emph{Selection rule}:
\begin{lem}
If the moduli space $\cSr_g(\ogamma)$ is non-empty, then
\begin{equation}
  \label{select}
  (2g-2+r)q_j-\th^{(j)}_{\gb_1}-\ldots-\th^{(j)}_{\gb_r}\in \Z
\mathrm{\ for \ } j=1,\ldots,n.
 \end{equation}
\end{lem}

The key construction in~\cite{PV16} is that of the \emph{fundamental matrix factorization}
which is a $\Ga_\bw$-equivariant matrix factorization of the polynomial
$\oplus_{i=1}^r\bw_{\gb_i}$  viewed as a function on the space
 $$\cSr_g(\ogamma)\times V^{\gb_1}\times\ldots\times V^{\gb_r}.$$
Using the fundamental matrix factorizations as kernels for functors of the Fourier-Mukai
type and passing to the Hochschild homology, we obtain the maps
\begin{equation}
\label{PV-class}
\phi_{g}(\ogamma):\bigotimes_{i=1}^{r}\cH_{\gb_i}\longrightarrow H^*(\cSr_g(\ogamma), \C)
\end{equation}
Now we can define the operations~\eqref{eq:cohft} of the MF CohFT as%
\footnote{The operations here differ from those   in~\cite[Eq.\ (5.16)]{PV16} by a sign.}
\begin{equation}
\label{reduced-cohft}
\Lambda_{g,r}^\PV(\ogamma)=
{1\over \deg ({\rm st}_g)} \cdot ({\rm st}_g)_*
\phi_{g}(\ogamma):
\bigotimes_{i=1}^{r}\cH_{\gb_i}\longrightarrow H^*(\oM_{g,r},\C).
\end{equation}

The \PVs CohFT~\eqref{reduced-cohft} has many nice properties. First of all, it
satisfies the axioms of the CohFT which connect the restrictions of
$\Lambda^\PV_{g,r}(\ogamma)$ to the boundary divisors with the same maps defined for
other values $(g,r)$. It has a flat identity, which is the natural generator $1_J$ in
$\cH_J=\C$.
In particular, the following {\it metric axiom} holds:
\begin{equation}
\label{metric-axiom}
\lan x_\gb, y_{\gb^{-1}}, 1_J \ran_{0,3}^\PV=\lan (\zb)_*x_\gb, y_{\gb^{-1}}\ran_{\bw_\gb},
\end{equation}
where $x_\gb\in \cH_\gb$, $y_{\gb^{-1}}\in \cH_{\gb^{-1}}$ (see~\cite[Lemma 6.1.1]{PV16}).

Another important property is that, for a polynomial $\bw$ which splits as a disjoint (Thom-Sebastiani) sum,
$$\bw=\bw_1\oplus\bw_2,$$
the corresponding \PVs CohFT decomposes into the tensor product of the CohFTs of the
summands $\bw_1$ and $\bw_2$.
Namely, we have natural identifications
$$G_\bw\simeq G_{\bw_1}\times G_{\bw_2} \mathrm{\ \ and \ \ }
\cH(\bw,G_\bw)\simeq \cH(\bw,G_{\bw_1})\otimes \cH(\bw,G_{\bw_2})$$
under which the map
$\Lambda_{g,r}^\PV(\ogamma_1,\ogamma_2)$ becomes the tensor product of
$\Lambda_{g,r}^\PV(\ogamma_1) $ and $\Lambda_{g,r}^\PV(\ogamma_2)$ (see~\cite[Sec.\
5.8]{PV16}). This property will allow us to focus our attention on polynomials $\bw$ of
one of the three atomic types~\eqref{atomic-type}.

 We refer to~\cite[Sections 5, 6]{PV16} for further properties of
 this CohFT and the corresponding correlators.

\subsection{Invertible polynomials}
\label{sec:lg-mirror}
Recall that a quasihomogeneous nondegenerate polynomial $\bw:\C^n\to \C$ is called
\emph{invertible} if it has $n$ nonzero monomials.%
\footnote{Notice that $n$ is the smallest possible number of monomials for a nondegenerate
  polynomial.}
Invertible polynomials have been classified by Kreuzer and Skarke~\cite[Theorem 1]{KS92}.
They proved that $\bw$ is invertible if and only if it is a disjoint (or Sebastiani-Thom)
sum of polynomials of one of the following three \emph{atomic} types
\begin{equation}\label{atomic-type}
\begin{array}{ll}
  \mathrm{Fermat}: & \bw=x_1^{a_1},
  \\[6pt]
  \mathrm{Chain}: & \displaystyle
\bw=\sum_{i=1}^{n-1}x_i^{a_i}x_{i+1}+x_n^{a_n},
 \\[6pt]
\mathrm{Loop}: & \displaystyle
       \bw=\sum_{i=1}^{n-1}x_i^{a_i}x_{i+1}+x_n^{a_n}x_1,
\end{array}
\end{equation}
where $a_i\geq2$ for each $i\in \{1, 2, \ldots, n\}$.

To study mirror symmetry between the dual
pairs $(\bw, \bw^T)$ of invertible polynomials,
we need to consider symmetry groups.

In Table \ref{table-atomic} we present the exponent matrices and the order of the diagonal symmetry
group $G_\bw$ for each of the atomic polynomials.

\begin{table}[H]
  \centering
\caption{Atomic invertible polynomials}
\label{table-atomic}
\renewcommand{\arraystretch}{1}
 \begin{tabular}{|c|c|c|c|}
 \hline
   & Fermat & Loop& Chain\\
 \hline
$\bw$
&$x^a$
& $\displaystyle\sum_{i=1}^{n-1}x_i^{a_i}x_{i+1}+x_n^{a_n}x_1$
&$\displaystyle\sum_{i=1}^{n-1}x_i^{a_i}x_{i+1}+x_n^{a_n}$\\
\hline
 $E_{\bw}$
&   $\begin{pmatrix}a \end{pmatrix}$  &
$
\begin{pmatrix}
a_1 & 1 &  &  \\
 & a_2 & \ddots &  \\
  & &\ddots & 1 \\
1 & &  & a_n
\end{pmatrix}$
&
$
\begin{pmatrix}
a_1 & 1 &  &  \\
 & a_2 & \ddots &  \\
  & &\ddots & 1 \\
& &  & a_n
\end{pmatrix}$
\\
\hline
$|G_\bw|$&$a$&$\prod\limits_{j=1}\limits^{n}a_j+(-1)^{n+1}$&$\prod\limits_{j=1}\limits^{n}a_j$\\
\hline
\end{tabular}
\end{table}

Let $\bw$ be an invertible polynomial.
Following Kreuzer~\cite{K94}, we will use the entries $\rho_j^{(i)}$ of the inverse of the exponent matrix $E_\bw$:
\begin{equation}\label{exponent-inverse}
E_\bw^{-1} =
\left( \begin{array}{ccc}
\rho_1^{(1)} & \cdots & \rho_n^{(1)} \\
\vdots  & \vdots & \vdots  \\
\rho_1^{(n)} & \cdots & \rho_n^{(n)}
\end{array} \right).
\end{equation}
The sum of the entries in the $i$th row gives the weight of the $i$th variable $x_i$ of $\bw$:
\begin{equation}
\label{variable-weight}
q_i = \sum_{j=1}^n \rho_j^{(i)}.
\end{equation}
The columns of $E^{-1}_\bw$ give special elements in the group of diagonal symmetries of $\bw$
\begin{equation}
  \brho_j:=\left(  e^{2\pi i\rho_j^{(1)}}, \ldots, e^{2\pi i\rho_j^{(n)}}   \right)\in G_\bw.
  \label{rhoj}
\end{equation}
Moreover, their product is equal to the exponential grading element $J_\bw\in G_{\bw}$:
\begin{equation*}
J_{\bw}=\prod_{j=1}^{n}\brho_j.
\end{equation*}

\

We recall some facts  about atomic invertible polynomials (see~\cite{K94, Kr10, HLSW}).

\begin{enumerate}
\item
  For a Fermat polynomial $\bw=x_1^{a_1}$, \ $a_1\geq 2$, we have
  $$q_1=\rho_{1}^{(1)}={1\over a_1}.$$
\item
For a polynomial of the chain type
 $\bw = \sum\limits_{i=1}^{n-1}x_i^{a_i}x_{i+1}+x_n^{a_n}$, the $(i,j)$-th entry of $E_\bw^{-1}$ is
\begin{equation}
\label{chain-entry}\rho_j^{(i)}=
\begin{dcases}
\begin{array}{ll}
(-1)^{j-i}\prod\limits_{k=i}^{j}{1\over a_k}, & j\geq i;\\
0, & j<i.
\end{array}
\end{dcases}
\end{equation}
The weight of the variable $x_i$ is equal to
\begin{equation}\label{chain-weight}
 q_i=\sum_{j=1}^{n}\rho_j^{(i)}=\sum_{j=i}^{n}(-1)^{j-i}\prod\limits_{k=i}^{j}{1 \over  a_k}
 ={1\over a_i}-{1\over a_i a_{i+1}}+ \ldots   +(-1)^{n-i}\prod\limits_{k=i}^{n}{1 \over a_k}>0.
\end{equation}
\item   For a polynomial of the loop type
  $\bw =\sum\limits_{i=1}^{n-1}x_i^{a_i}x_{i+1}+x_n^{a_n}x_1$, we have
\begin{equation}
\label{loop-entry}\rho_j^{(i)}=
\left\{
\begin{array}{ll}
 (-1)^{j-i}\prod\limits_{j+1}^{n}a_k\prod\limits_{k=1}^{i-1}a_k \Big/ \Big(\prod
  \limits_{k=1}^n a_k+(-1)^{n+1}\Big), &  j\geq i,\\
 (-1)^{n+j-i}\prod\limits_{k=j+1}^{i-1}a_k \Big/  \Big(\prod\limits_{k=1}^n a_k+(-1)^{n+1}\Big),  & j<i.
\end{array}
\right.
\end{equation}
Here  we use the convention that an empty product is 1.
The weight of $x_i$ is given by
\begin{equation}\label{loop-weight}
q_i = \left(\sum \limits_{j=i}^{n}(-1)^{j-i}\prod\limits_{k=j+1}^{n}a_k\prod\limits_{k=1}^{i-1}a_k
  +
  \sum\limits_{j=1}^{i-1}(-1)^{n+j-i}\prod\limits_{k=j+1}^{i-1}a_k\right)\bigg/\left(\prod\limits_{k=1}^n
  a_k+(-1)^{n+1}\right).
\end{equation}
\end{enumerate}

Using \eqref{chain-entry}, \eqref{chain-weight}, \eqref{loop-entry}, and
\eqref{loop-weight}, the following result of~\cite{K94} can be obtained.
\begin{prop}
\label{broad-relation}
For each atomic polynomial $\bw$
in Table \ref{table-atomic}, we have:
\begin{enumerate}[(i)]
\item
The rational numbers $\rho_j^{(i)}$ satisfy
\begin{equation}
\label{rho-constraint}
\rho_{j-1}^{(i)}+a_j\rho_j^{(i)}=\de_{j}^{i}.
\end{equation}
\item
Let us set $q_{n+1}:=q_1$ when $\bw$ is a loop and $q_{n+1}:=0$ when $\bw$ is a chain.
Then for all $i=1, \ldots, n$, the rational numbers $q_i$ satisfy
\begin{equation}
\label{weight-dimension}
a_iq_i=1-q_{i+1}.
\end{equation}
\item
The rational number $q_i-\rho_j^{(i)}$ is an integer only if
$j=i+1=n=2$ and $a_n=2.$
\end{enumerate}
\end{prop}

\subsection{Some computational tools}
We finish this section by presenting some facts about the MF CohFT from~\cite{PV16} which we will need later.

\subsubsection{Koszul matrix factorizations and homogeneity conjecture}
Here we recall the definition of Koszul matrix factorizations which appear in some computations below.
We also explain the Homogeneity Conjecture from~\cite{PV16} and present a result
which will be essential for the proof of Proposition~\ref{nonvanishing} in Section~\ref{sec-frob-mfd}.

\begin{df}
Let $V$ be a vector bundle on a scheme $X$ and let $V^\vee$ be the dual bundle. To a
pair of sections $\alpha\in H^0(X,V)$ and $\beta\in H^0(X,V^\vee)$ we associate the
\emph{Koszul matrix factorization} $\{\alpha,\beta\}$ of the function $\bw:=\langle
\beta, \alpha \rangle\in H^0(X,\mathcal{O}_X)$ with the $\Zt$-graded module
$E:=\bigwedge^\bullet(V)$ and the differential $\delta=\alpha\wedge \cdot +
\iota(\beta)$.
\end{df}
When $V$ is a trivial bundle of rank $r$, we  will represent  the sections $\alpha$ and
$\beta$ as $r$-tuples of functions and will write $\{a_1,\ldots,a_r; b_1,\ldots,b_r\}$
instead of $\{\alpha,\beta\}$. Notice that the tensor product of several Koszul matrix
factorizations is also a Koszul matrix factorization.

\begin{df}
Let $\gb=(e^{2\pi i\th^{(1)}},\ldots,e^{2\pi i\th^{(n)}})  \in G_\bw\subset (\C^*)^n$ be
an element of the maximal symmetry group of a quasihomogeneous polynomial $\bw$.

The \emph{degree shifting number} $\iota_\gb$ of $\gb$ is defined by
\begin{equation}
\label{degree-shifting-number}
\iota_\gb:=\sum_{j=1}^{n}(\th_\gb^{(j)}-q_j).
\end{equation}
 For an $r$-tuple $\ogamma=(\gb_1,\ldots,\gb_r)\in G_\bw^r$~,
its \emph{twisted dimension} $\wt{D}_g(\ogamma)$ is defined by
\begin{equation}
\label{twist-dim}
\wt{D}_g(\ogamma) := (g-1)\widehat{c}_\bw + \sum_{i=1}^{r}\iota_{\gb_i} + {1\over 2} \cdot
\sum_{i=1}^{r}n_{\gb_i}~,
\end{equation}
where $\widehat{c}_\bw$ is the central charge~\eqref{eq:centr-charge} and
\begin{equation}
\label{broad-dimension}
n_\gb=\dim V^\gb.
\end{equation}
\end{df}

\

The following {\em Homogeneity Conjecture} is stated in~\cite[Section 5.6]{PV16}.
\begin{conj}
  The image of the map
  $\phi_{g}(\ogamma)$~\eqref{PV-class} is contained in $H^{2\wt{D}_{g}(\ogamma)}(\cSr_g(\ogamma), \C)$.
\end{conj}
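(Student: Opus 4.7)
The plan is to introduce an auxiliary weight grading coming from the quasihomogeneity of $\bw$, verify that the construction of $\phi_g(\ogamma)$ is compatible with this grading, and then compute the total degree by a local-to-global argument on the universal curve.

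First, I would use the quasihomogeneity condition \eqref{eq:weights} to define a $\Gm$-action on each $V^{\gb_i}$ with weights $q_1,\ldots,q_n$, under which each restricted polynomial $\bw_{\gb_i}$ becomes homogeneous of weight $1$. The fundamental matrix factorization of $\oplus_{i=1}^r \bw_{\gb_i}$ on the product $\cSr_g(\ogamma)\times V^{\gb_1}\times\cdots\times V^{\gb_r}$, built as a Koszul-type factorization from the universal $\Ga_\bw$-spin structure, admits an equivariant lift of this $\Gm$-action (twisted by the scaling character $\chi$) so that its differentials intertwine the action correctly. This equivariance is the structural input that pins down the cohomological degree of the output.

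Second, I would use the equivariance to control the weight of the Chern character and, through it, the weight of the Hochschild homology classes contributing to $\phi_g(\ogamma)$. For a $\Gm$-equivariant matrix factorization of a quasihomogeneous polynomial, the Chern character formula \eqref{super-trace} lands in a definite weight piece of $H(\bw_{\gb})$ under the normalization in which the Hessian element sits in the top degree. The degree shifting number $\iota_{\gb}$ then appears as the orbifold correction between this intrinsic weight grading and the grading of $\cH_{\gb}$ as a twisted sector of $\cH(\bw,G_\bw)$, analogous to the age grading in orbifold cohomology.

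Third, I would compute the cohomological degree of $\phi_g(\ogamma)$ by tracking the Fourier-Mukai pushforward through a Grothendieck-Riemann-Roch computation on the universal stacky curve $\cC\to \cSr_g(\ogamma)$. The degree shift produced by $R\Gamma(\cC,-)$ applied to the relevant line bundles is governed by their fiberwise degrees \eqref{deg-L-eq} and the Euler characteristic of $\cC$, while the twisted sector contributions account for the shifts $\iota_{\gb_i}$ and the half-dimension corrections $\tfrac{1}{2}n_{\gb_i}$ arising from middle cohomology of the broad Milnor fibers. A direct accounting using the central charge formula \eqref{eq:centr-charge} identifies the total cohomological degree with $2\wt{D}_g(\ogamma)$.

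The main obstacle will be the stacky refinement of these computations. The universal curve $\cC\to \cSr_g(\ogamma)$ carries orbifold structure both at marked points and at nodes, so GRR must be applied in the appropriate twisted setting, with correct contributions from the isotropy groups at the stacky points corresponding to each $\gb_i$. A related subtlety is ensuring that the equivariant lift of the fundamental matrix factorization extends consistently across boundary strata, so that the resulting degree computation respects the CohFT factorization axioms; this requires a local analysis at nodes analogous to the nodal propagator construction of \cite[Sec.\ 5]{PV16}.
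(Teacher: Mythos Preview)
The statement you are attempting to prove is explicitly labeled a \emph{Conjecture} in the paper, not a theorem. The paper does not claim a proof in general; it only establishes the special case of an invertible polynomial $\bw$ with the maximal diagonal symmetry group $G_\bw$ (Proposition~\ref{homogeneity-prop}), and it does so not by the kind of direct GRR computation you outline, but by invoking two cited results: the criterion from \cite[Corollary~5.6.5]{PV16} (Lemma~\ref{homogeneity-lemma}) that the conjecture holds whenever each $\HH_*(\MF(\bw_\gb))^G$ is generated by Chern characters of Koszul matrix factorizations, together with Gu\'er\'e's verification \cite[Lemma~2.2]{Gue} (Lemma~\ref{Guere-lemma}) that this generation property holds for invertible $\bw$ with $G=G_\bw$. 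So the paper's route is: reduce homogeneity to a representability condition on the state space, then check that condition case by case for atomic polynomials.

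Your proposal, by contrast, sketches a direct attack on the general conjecture via equivariance of the fundamental matrix factorization and a stacky GRR argument. The outline is plausible in spirit, but there is a genuine gap at the third step. The map $\phi_g(\ogamma)$ is defined by passing to Hochschild homology of a Fourier--Mukai type functor, and Hochschild homology of $\MF_\Ga(\bw)$ does not a priori carry a cohomological grading compatible with $H^*(\cSr_g(\ogamma))$ in the way your GRR bookkeeping assumes. The whole point of the Koszul generation hypothesis in Lemma~\ref{homogeneity-lemma} is that Koszul matrix factorizations are the objects for which the weight filtration you want actually exists and can be tracked through the pushforward; for arbitrary classes in $\HH_*$ this control is missing, which is exactly why the statement remains conjectural in \cite{PV16}. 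Your claim that the equivariant lift of the fundamental matrix factorization ``pins down the cohomological degree of the output'' elides this: equivariance constrains the weight of the kernel, but translating that into a cohomological degree on the base after applying $R\Gamma$ and the HKR-type identification requires precisely the representability input you have not supplied.
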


\

In~\cite{PV16} a sufficient condition for this conjecture was established.
\begin{lem}
\cite[Corollary 5.6.5]{PV16}
\label{homogeneity-lemma}
The Homogeneity Conjecture holds for the CohFT associated with $\bw$ and $G$ whenever
the space $\HH_*(\MF(\bw_\gb))^G$ is  generated by the Chern characters of Koszul matrix
factorizations for each $\ga\in G$.
\end{lem}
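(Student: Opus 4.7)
The plan is to reduce the homogeneity statement to an explicit Chern-character computation for Koszul matrix factorizations, for which the cohomological degrees can be read off directly from the universal spin line bundles. By hypothesis each invariant sector $H(\bw_\gb)^G$ is spanned by Chern characters $\mathrm{Ch}(\ov E)$ of $G$-equivariant Koszul matrix factorizations $\ov E$, and by averaging one may lift each such $\ov E$ to a $\Gamma_\bw$-equivariant Koszul matrix factorization. Since $\phi_{g}(\ogamma)$ is multilinear in its $r$ arguments, it suffices to verify the homogeneity on an $r$-tuple $(\mathrm{Ch}(\ov E_1),\ldots,\mathrm{Ch}(\ov E_r))$ of such Chern characters.

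The second step is to identify the kernel explicitly. By the construction of \cite{PV16}, the class $\phi_{g}(\ogamma)(\mathrm{Ch}(\ov E_1),\ldots,\mathrm{Ch}(\ov E_r))$ equals the Chern character of the kernel on $\cSr_g(\ogamma)$ obtained by convolving the fundamental matrix factorization on $\cSr_g(\ogamma)\times V^{\gb_1}\times\cdots\times V^{\gb_r}$ with $\ov E_1\boxtimes\cdots\boxtimes\ov E_r$. The fundamental matrix factorization is itself Koszul, and the box product of Koszul matrix factorizations is Koszul, so the convolution is again a Koszul matrix factorization on $\cSr_g(\ogamma)$ whose defining data are controlled by the universal spin line bundles $\cL_1,\ldots,\cL_n$ together with the line bundles entering the $\ov E_i$.

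The third step applies the supertrace formula \eqref{super-trace}, together with the standard Chern-Weil-type computation of the Chern character of a Koszul matrix factorization. The outcome is a polynomial in the first Chern classes $c_1(\cL_j)$ and in classes pulled back from the insertion factors, whose total cohomological degree is a linear combination of $\deg \cL_j$ (given by \eqref{deg-L-eq}) and of the ranks $n_{\gb_i}$ of the fixed subspaces $V^{\gb_i}$ that are being integrated out.

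The main obstacle, and the real content of the argument, is the final bookkeeping: one must verify that this total cohomological degree equals
\[
2\wt D_g(\ogamma)\;=\;2\Bigl[(g-1)\widehat{c}_\bw+\sum_{i=1}^{r}\iota_{\gb_i}+\tfrac12\sum_{i=1}^{r}n_{\gb_i}\Bigr].
\]
Expanding $\deg\cL_j=(2g-2+r)q_j-\sum_i\th^{(j)}_{\gb_i}$ and $\widehat{c}_\bw=\sum_{j=1}^{n}(1-2q_j)$, the $q_j$-contributions on both sides combine to reproduce the term $2(g-1)\widehat{c}_\bw$, while the $\th^{(j)}_{\gb_i}$-contributions assemble into $2\sum_i\iota_{\gb_i}$ via $\iota_{\gb_i}=\sum_j(\th^{(j)}_{\gb_i}-q_j)$. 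The remaining $\tfrac12 n_{\gb_i}$ terms arise from the rank contributions of fiber-integration over each $V^{\gb_i}$, and the selection rule of Lemma \ref{select} ensures that the resulting expression is an integer so the purity is not spoiled. Carefully matching which line bundles $\cL_j$ appear with which multiplicity — and in particular distinguishing those pulled back from the moduli of curves from those arising from the insertion MFs — is the delicate combinatorial point that makes the computation work.
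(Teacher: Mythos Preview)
The paper does not give its own proof of this lemma; it simply quotes \cite[Corollary~5.6.5]{PV16}. So your proposal should be compared against the argument in \cite{PV16}, not against anything in the present paper.

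Your overall strategy---reduce to Chern characters of Koszul matrix factorizations and then track cohomological degree through the Fourier--Mukai kernel---is in the right spirit, but two of the steps do not hold as stated.

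First, the sentence ``by averaging one may lift each such $\ov E$ to a $\Ga_\bw$-equivariant Koszul matrix factorization'' is not correct: $\Ga_\bw$ contains a copy of $\Gm$, so there is no averaging procedure. What is true (and is what \cite{PV16} uses) is that a Koszul matrix factorization built from quasihomogeneous data carries a natural $\Ga_\bw$-equivariant structure, and hence its Chern character has a well-defined weight with respect to the $\Gm\subset\Ga_\bw$-grading. That grading is the thing one actually tracks.

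Second, and more seriously, the claim that ``the fundamental matrix factorization is itself Koszul, and \ldots\ the convolution is again a Koszul matrix factorization on $\cSr_g(\ogamma)$'' is not correct in general. The fundamental matrix factorization of \cite{PV16} is obtained via push-forward from the universal curve and is not a Koszul matrix factorization on the moduli space; consequently your third step---reading off the cohomological degree from the supertrace formula~\eqref{super-trace} applied to a global Koszul MF on $\cSr_g(\ogamma)$---cannot be carried out as written. The actual argument in \cite[Sec.~5.6]{PV16} does not attempt this. Instead it exploits $\Ga_\bw$-equivariance abstractly: the Fourier--Mukai functor induced by the fundamental MF is $\Ga_\bw$-equivariant, hence the induced map on Hochschild homology respects the $\Gm$-weight decomposition, and the homogeneity of $\phi_g(\ogamma)$ follows once one knows the inputs lie in a single weight piece---which is exactly what the Koszul-generation hypothesis guarantees. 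Your final bookkeeping with $\deg\cL_j$, $\iota_{\gb_i}$, and $n_{\gb_i}$ is the shadow of this weight computation, but it cannot be justified by the explicit Koszul description you propose.
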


The following result will be reviewed in Section \ref{chern-basis}.
\begin{lem}
\cite[Lemma 2.2]{Gue}
\label{Guere-lemma}
Let $\bw$ be an invertible polynomial. For every $\ga\in G_{\bw}$  the Hochschild
homology $\HH_*(\MF(\bw_\gb))^{G_{\bw}}$ is  generated by the Chern characters of Koszul
matrix factorizations.
\end{lem}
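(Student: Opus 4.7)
The plan is to reduce to atomic type and then exhibit enough explicit Koszul matrix factorizations whose Chern characters span the $G_\bw$-invariant Hochschild homology. First I would use the Sebastiani--Thom decomposition: an invertible polynomial $\bw$ is a direct sum of atomic blocks $\bw=\bw^{(1)}\oplus\cdots\oplus \bw^{(s)}$ of Fermat, chain, or loop type \eqref{atomic-type}. The group factors as $G_\bw=\prod_\ell G_{\bw^{(\ell)}}$, the restriction $\bw_\gb$ splits accordingly, and the identification $\HH_*(\MF(\bw_\gb))\simeq H(\bw_\gb)$ is multiplicative under tensor product of matrix factorizations; the Chern character \eqref{super-trace} is also multiplicative under external tensor product. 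Hence it suffices to treat each atomic type separately for every $\gb$ in the corresponding factor group.

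Second, fix an atomic polynomial $\bw$ and $\gb\in G_\bw$, and let $I_\gb=\{i:\th_\gb^{(i)}=0\}$ be the set of indices fixed by $\gb$. The restriction $\bw_\gb$ is the sum of those monomials of $\bw$ involving only the variables $x_i$ with $i\in I_\gb$; in the chain and loop cases this produces either the zero polynomial, a Fermat summand, a shorter chain, or a disjoint union of such pieces. Via the isomorphism $H(\bw_\gb)\simeq \cQ_{\bw_\gb}\cdot\bo_\gb$, the subspace $H(\bw_\gb)^{G_\bw}$ is spanned by residues of monomials $x^{\vec m}$ in the variables $x_i$, $i\in I_\gb$, whose multi-degree $\vec m$ satisfies the congruence conditions imposed by the characters of $G_\bw$ on the coordinates.

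Third, for each invariant monomial $x^{\vec m}$ I would exhibit a Koszul matrix factorization $\{\alpha,\beta\}$ of $\bw_\gb$ whose Chern character computed via \eqref{super-trace} equals a nonzero scalar multiple of $x^{\vec m}\bo_\gb$ modulo $\cJ_{\bw_\gb}$. For Koszul factorizations the supertrace formula collapses to a Jacobian-type determinant in the entries of $\alpha$ and $\beta$, so the key is to choose the splitting $\bw_\gb=\sum \alpha_j\beta_j$ adapted to the monomial structure of the atomic block. The natural splittings suggested by the atomic form (e.g.\ $\alpha_i=x_i$, $\beta_i=x_i^{a_i-1}x_{i+1}$ or permutations thereof), together with their twists by characters of $\Ga_\bw$, give a family whose Chern characters one can match against the monomial basis of $\cQ_{\bw_\gb}^{G_\bw}$ using Proposition~\ref{broad-relation} and the formulas \eqref{chain-entry}--\eqref{loop-weight} for the weights. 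Equivariance (to land in the correct isotypical component of \eqref{hochschild-MF-eq}) is then arranged by twisting the $G_\bw$-equivariant structure on $\{\alpha,\beta\}$ by a suitable character.

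The main obstacle lies in the third step: for chain and loop atoms with $I_\gb$ a proper subset of $\{1,\ldots,n\}$ the combinatorics of the surviving monomials, of the Jacobian relations, and of the invariance congruences become entangled, and one must check that \emph{every} invariant monomial is realized, not merely a spanning set that could a priori miss some residues. I would organize this with an induction on the length of the chain (resp.\ loop) together with the reduction in step one, leveraging the fact that the Hessian element \eqref{eq:hess} is always a Chern character (of the stabilization of the residue field) to handle the top-degree socle and then descending by multiplying by lower-degree Chern characters. The delicate matching between Chern character computations and the monomial basis of the invariants is exactly what is carried out in \cite[Lemma 2.2]{Gue}.
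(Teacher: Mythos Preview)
The paper does not give its own proof of this lemma; it is cited from \cite{Gue} and then the relevant Koszul matrix factorizations are written down explicitly in Section~\ref{chern-basis}. So the comparison is really between your outline and the argument of Gu\'er\'e as reviewed in Section~\ref{chern-basis}.

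Your reduction to atomic type via Sebastiani--Thom is correct and is exactly how the paper proceeds. The genuine gap is in your second and third steps. You set up the problem as ``realize every $G_\bw$-invariant monomial in $\cQ_{\bw_\gb}$ as a Chern character'', treating $I_\gb$ as an arbitrary subset of $\{1,\ldots,n\}$. Two things go wrong here. First, your claim that $\bw_\gb$ is always ``a Fermat summand, a shorter chain, or a disjoint union of such pieces'' is false: if $I_\gb$ is, say, an interior segment $\{m,\ldots,k\}$ of a chain with $k<n$, then $\bw_\gb=x_m^{a_m}x_{m+1}+\cdots+x_{k-1}^{a_{k-1}}x_k$ has \emph{non-isolated} critical locus, and the identification $H(\bw_\gb)\simeq \cQ_{\bw_\gb}\cdot\bo_\gb$ you invoke is not available. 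Second, and more importantly, you are attacking a much harder problem than necessary.

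The key step you are missing is the classification (due to Kreuzer, recalled in Lemma~\ref{chain-broad} and Lemma~\ref{broad-classify}) of those $\gb\in G_\bw$ for which $H(\bw_\gb)^{G_\bw}\neq 0$. For an atomic $\bw$ this forces $\gb$ to be either narrow, or to fix exactly a terminal segment $x_{n-2k+1},\ldots,x_n$ of a chain (so $\bw_\gb$ \emph{is} a genuine shorter chain with isolated singularity), or to be the identity in an even-variable loop. In every broad case $\dim H(\bw_\gb)^{G_\bw}$ is $1$ (chain) or $2$ (even loop). With this in hand the lemma becomes a finite check: one simply writes down the single Koszul factorization $K_\bm$ of~\eqref{chain-koszul} in the chain case, or the pair $K_{\rm odd},K_{\rm even}$ in the loop case, computes their Chern characters by the supertrace formula~\eqref{super-trace} (see~\eqref{chern-chain}, \eqref{chern-odd}, \eqref{chern-even}), and observes they are nonzero, hence span. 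No induction and no ``realize every monomial'' matching is needed.
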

As a consequence of the above two lemmas we have
\begin{prop}
\label{homogeneity-prop}
The Homogeneity Conjecture holds for the CohFT  associated with an invertible polynomial
$\bw$ and the maximal group of diagonal symmetries $G_{\bw}.$
\end{prop}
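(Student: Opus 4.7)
The plan is to derive Proposition~\ref{homogeneity-prop} as an immediate combination of Lemma~\ref{Guere-lemma} and Lemma~\ref{homogeneity-lemma}, which together already supply all the required content. No new geometric or algebraic input is needed; the argument is a one-step corollary bookkeeping two previously quoted facts.

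Lemma~\ref{homogeneity-lemma} reduces the Homogeneity Conjecture for the CohFT associated with an admissible pair $(\bw, G)$ to the purely algebraic statement that, for each $\gb \in G$, the $G$-invariant Hochschild homology $\HH_*(\MF(\bw_\gb))^G$ is generated by the Chern characters of Koszul matrix factorizations. My strategy is to apply this criterion with $G = G_\bw$, the maximal diagonal symmetry group of $\bw$.

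The algebraic hypothesis demanded by Lemma~\ref{homogeneity-lemma} is then supplied verbatim by Lemma~\ref{Guere-lemma}, which asserts the Koszul-generation property for $\HH_*(\MF(\bw_\gb))^{G_\bw}$ whenever $\bw$ is invertible. Crucially, the group of invariants appearing in the conclusion of Lemma~\ref{Guere-lemma} is literally $G_\bw$, matching the hypothesis of Lemma~\ref{homogeneity-lemma} with $G = G_\bw$ without any compatibility argument or surjectivity check between $G_\bw$ and the smaller groups $G_{\bw_\gb}$. Combining the two lemmas therefore yields the Homogeneity Conjecture for $(\bw, G_\bw)$, which is exactly the content of Proposition~\ref{homogeneity-prop}.

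I do not expect any genuine obstacle: the proposition is deliberately isolated as a formal corollary of Lemma~\ref{Guere-lemma} and Lemma~\ref{homogeneity-lemma}, and its role in the paper is simply to package this consequence in a form directly usable elsewhere, most notably in the proof of the genus-zero nonvanishing criterion Proposition~\ref{nonvanishing} in Section~\ref{sec-frob-mfd}. The only work that could conceivably be hidden behind the statement lies inside Lemma~\ref{Guere-lemma} itself (whose proof is reviewed in Section~\ref{chern-basis}), but that is outside the scope of the present deduction.
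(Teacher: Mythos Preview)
Your proposal is correct and matches the paper's approach exactly: the paper states Proposition~\ref{homogeneity-prop} immediately after Lemma~\ref{Guere-lemma} with the single sentence ``As a consequence of Lemma~\ref{Guere-lemma} and Lemma~\ref{homogeneity-lemma}, we have'', offering no further argument. Your only addition is to spell out that the group $G$ in Lemma~\ref{homogeneity-lemma} is taken to be $G_\bw$, which is indeed the trivial bookkeeping step.
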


\subsubsection{Tools for computing three-point correlators}
Here we will derive from~\cite[Proposition 6.2.2]{PV16} a useful result which will help
with computations of genus-zero three-point correlators in Section~\ref{sec:gen-Jac-rel-Fr-alg} below.
First, let us introduce some notation.

Fix $\gb_1,\gb_2,\gb_3\in G$ such that $\cSr_0(\gb_1,\gb_2,\gb_3)$ is non-empty.
In particular, by~\eqref{eq:nonempty}, this means that
$$\gb_1\gb_2\gb_3=J.$$
For each $j=1,\ldots,n$, consider subsets of $\{1,2,3\}$
\begin{equation}
\label{broad-indices}
\Sigma_j:=\{i \ | 1\leq i\leq 3, \gb_i^{(j)}=0\}.
\end{equation}
and for $k=0,1,2$, define the subsets of $\{1,\ldots,n\}$
\begin{equation}
\label{broad-line-indices}
S_k=\{j\ |\ |\Sigma_j|=k \text{ and } L_j\simeq\cO(k-2)\}.
\end{equation}
Finally, for  $\gb,\gb'\in G_\bw$, let
$$
V^{\gb,\gb'}:=V^\gb\cap V^{\gb'}
$$
be the subspace of $V$ fixed by both $\gb$ and $\gb'$.

\begin{prop}
\label{P622-prop}
Assume that $\gb_1,\gb_2,\gb_3$ are such that for every $j$ with $\Sigma_j=\emptyset$, we have $\deg L_j=-1$.

For elements  $t_1,t_2,t_3$ of the torus $(\Gm)^n$, let us
consider the following subspace of
 $V^{\gb_1}\oplus V^{\gb_2}\oplus V^{\gb_3}$:
\begin{align*}
V(t_1,t_2,t_3):=&\{(x_1,x_2,x_3) \in V^{\gb_1}\oplus V^{\gb_2}\oplus V^{\gb_3}\ |\ \pi(x_1,x_2,x_3)=0,
 \ p_{12}(x_1)  = t_1 p_{12}(x_2),  \\  & \ \ p_{23}(x_2)=t_2 p_{23}(x_3), \ \ p_{13}(x_3)=t_3 p_{13}(x_1)\},
\end{align*}
where
\ $p_{ij}:V\to V^{\gb_i,\gb_j}$ \
is the coordinate projection \  and
\ $\pi:V^{\gb_1}\oplus V^{\gb_2}\oplus V^{\gb_3}\to \A^{S_1}$ \ is the composition of
the natural map $V^{\gb_1}\oplus V^{\gb_2}\oplus V^{\gb_3}\to V$ with the projection $p: V \to \A^{S_1}$.

Then there exist elements $t_1,t_2,t_3\in (\Gm)^n$ such that
$$\left( w_{\gb_1}\oplus w_{\gb_2}\oplus w_{\gb_3} \right)|_{V(t_1,t_2,t_3)}=0.$$
Furthermore, the three-point map~\eqref{PV-class}
$$
\phi_0(\gb_1,\gb_2,\gb_3):\cH_{\gb_1}\ot \cH_{\gb_2}\ot \cH_{\gb_3}\to\C$$
is induced on Hochschild homology by the functor
$R\Ga\circ \iota^*$, where
$$\iota: V(t_1,t_2,t_3)\hra V^{\gb_1}\oplus V^{\gb_2}\oplus V^{\gb_3}$$
is the natural embedding.
\end{prop}

\begin{proof}
Note that the first assumption is equivalent to the condition that
$S_0=\emptyset$ and $|\Sigma_j|\ge 1$ for each $j$ such that $\deg L_j=0$.

Now~\cite[Proposition 6.2.2.(i)]{PV16}
implies that the fundamental matrix factorization on
$V^{\gb_1}\oplus V^{\gb_2}\oplus V^{\gb_3}$
is a Koszul matrix factorization $\{\alpha,\beta\}$,
where $\alpha$ and $\beta$ are sections of the dual trivial bundles with bases $(e^*_j)$
and $(e_j)$ numbered by $S_1\sqcup S_2$.
Furthermore, the coefficients $\beta_j$ of $e_j$ in $\beta$ have the following
description. For $j\in S_1$, we have
$$\beta_j=(x_i)_j, \text{ where } \Sigma_j=\{i\}$$
(here we denote by $(x_i)_j$ the coordinates of $x_i\in V^{\gamma_i}\subset \A^n$).
For $j\in S_2$, we have
$$\beta_j=a_j(x_{i_1})_j+b_j(x_{i_2})_j, \text { where } \Sigma_j=\{i_1,i_2\},$$
for some $a_j,b_j\in \C^*$.

Note that the relation $\gb_1\gb_2\gb_3=J_\bw$ implies that $V^{\gb_1}\cap V^{\gb_2}\cap V^{\gb_3}=0$.
Thus, the functions $((x_i)_j)_{\Sigma_j=\{i\}}$ and
$((x_{i_1})_j,(x_{i_2})_j)_{\Sigma_j=\{i_1,i_2\}}$ are exactly the
coordinates on the affine space $V^{\gb_1}\oplus V^{\gb_2}\oplus V^{\gb_3}$.
It follows that the section $\beta$ is regular and its zero locus is the subspace $V(t_1,t_2,t_3)$
for some $t_1,t_2,t_3$.
Thus, the assertion follows from a known property of regular Koszul matrix factorizations
(see~\cite[Proposition 1.6.3.(ii)]{PV16}).
\end{proof}

\begin{crl}\label{P622-cor}
  In the situation of Proposition \ref{P622-prop}, assume
in addition that $V^{\gb_1}=0$ and that the homomorphism
$$G_\bw\to G_{\bw_{\gb_2,\gb_3}}$$
(where $\bw_{\gb_2,\gb_3}=\bw|_{V^{\gb_2,\gb_3}}$) is surjective.
Assume also that for $i=1,2$,
$$\bw_{\gb_i}|_{V^{\gb_i}\cap\ker(p_{S_1})}=p_{23}^*\bw_{\gb_2,\gb_3}|_{V^{\gb_i}\cap \ker(p_{S_1})}.$$
Consider, for $i=2,3$, the linear maps
$$f_i:H(\bw_{\gb_i})\to H(\bw_{\gb_2,\gb_3})$$
induced on the Hochschild homology by the composition of the functors of
the restriction to $V^{\gb_i}\cap \ker(p_{S_1})\subset V^{\gb_i}$
and the push-forward with respect to the projection to $V^{\gb_2,\gb_3}$.
Then for $h_i\in H(\bw_{\gb_i})$, $i=2,3$,
we have
$$\lan \one_{\gb_1},h_2,h_3\ran_{0,3}^{\PV}=\lan f_2(h_2),f_3(h_3)\ran_{\bw_{\gb_2,\gb_3}}.$$
\end{crl}

\begin{proof}
  By Proposition \ref{P622-prop}, there exists an element
$t_2$ of the torus $(\Gm)^n$   such that
$\phi_0(\gb_1,\gb_2,\gb_3)$ is induced by the functor $R\Ga\circ \iota^*$, where
$\iota$ is the embedding of the subspace $V(t_2)\subset V^{\gb_2}\oplus V^{\gb_3}$
consisting of $(x_2,x_3)$ such that $\pi_{S_1}(x_2,x_3)=0$ and $p_{23}(x_2)=t_2p_{23}(x_3)$.
Thus, $V(t_2)$ is the preimage of the graph of $t_2$ on $V^{\gb_2,\gb_3}$ under the surjective map
 $$q:\big(V^{\gb_2}\cap\ker(\pi_{S_1})\big)
\oplus \big(V^{\gb_3}\cap\ker(\pi_{S_1})\big)\to V^{\gb_2,\gb_3}\oplus V^{\gb_2,\gb_3}.$$
By assumption, the restriction of $\bw_{\gb_2}\oplus \bw_{\gb_3}$ to the source of this
map is equal to $q^*(\bw_{\gb_2,\gb_3}\oplus \bw_{\gb_2,\gb_3})$. It follows that
the restriction of $\bw_{\gb_2,\gb_3}\oplus \bw_{\gb_2,\gb_3}$ to the graph of $t_2$ is
zero. Hence, there exists an element $g\in G_{\bw_{\gb_2,\gb_3}}$
such that $t_2=g\zb$, where $\zb$ is the special square root~\eqref{zeta-eq} of the
grading element $J_\bw$.
Since $g$ comes from an element of $G_\bw$, we can replace $t_2$ by $\zb$, which leads to the
claimed formula.
\end{proof}

\section{Mirror Frobenius algebras}
\label{sec-frob-alg}
In this section we establish an isomorphism between two Frobenius algebras related to an
invertible polynomial $\bw$, thus proving LG mirror symmetry at the topological level
(Theorem~\ref{frob-isom}).

In Section~\ref{sec:bases-state-spaces}, we construct bases in the MF state space $\cHw$
and in the Milnor ring $\cQ_{\bw^T}$ of the dual polynomial $\bw^T$.
In Section~\ref{sec:pair-pres-mirror-map}, we
use Kreuzer's work~\cite{K94} to construct a mirror map $\theta$ from $\cQ_{\bw^T}$ to $\cHw$.
In Section \ref{sec:gen-Jac-rel-Fr-alg}, we compute the ring structure constants of $\cHw$
and complete the proof that $\theta$ is an isomorphism of Frobenius algebras.

\begin{rem}
\label{non-atomic}
In what follows we will restrict our attention to atomic polynomials~\eqref{atomic-type}.
This is sufficient, since for a disjoint sum of atomic polynomials
$\bw=\bigoplus\limits_i\bw_i$, the dual polynomial $\bw^T$, the maximal symmetry group
$G_\bw$, and the state spaces $\cH(\bw,G_\bw)$ and  $\cQ_{\bw^T}$ with their metrics decompose accordingly:
$$\bw^T=\bigoplus_i\bw^T_i, \ G_\bw\simeq\prod_iG_{\bw_i}, \
\cH(\bw,G_\bw)\simeq \bigotimes_i\cH(\bw,G_{\bw_i}), \ \mathrm{and} \ \
\cQ_{\bw^T}\simeq\bigotimes_i \cQ_{\bw_i^T}.$$
\end{rem}

\subsection{State spaces}\label{sec:bases-state-spaces}
Let $\bw$ be one of the atomic polynomials~\eqref{atomic-type}
and let $\bw^T$ be its dual~\eqref{eq:dual_poly}.
We will describe bases of the state spaces of the two CohFTs related to $\bw$:
the Milnor ring $\cQ_{\bw^T}$ of the dual polynomial $\bw^T$
and the  state space $\cHw$ for the maximal diagonal
symmetry group $G_\bw$.

\subsubsection{Standard   basis of $\cQ_{\bw^T}$}
\begin{table}[h]
  \centering
\caption{Mirror atomic polynomials}
  \label{table-basis}
\renewcommand{\arraystretch}{1}
 \begin{tabular}{|c|c|c|c|}
 \hline
Type & Fermat & Loop& Chain\\
 \hline
$\bw^T$
&$x^a$
& $\bw^T_{\rm loop}=x_nx_1^{a_1}+\sum\limits_{i=2}^{n}x_{i-1}x_i^{a_i}$
&$\bw^T_{\rm chain}=x_1^{a_1}+\sum\limits_{i=2}^{n}x_{i-1}x_i^{a_i}$\\
\hline
$E_{\bw^T}$
&
$
\begin{pmatrix}
a
\end{pmatrix}$
&$
\begin{pmatrix}
a_1 &  &  & 1 \\
1 & a_2 &  &  \\
  & \ddots &\ddots &  \\
 & & 1 & a_n
\end{pmatrix}$
&
$
\begin{pmatrix}
a_1 &  &  &  \\
1 & a_2 &  &  \\
  & \ddots &\ddots &  \\
 & & 1 & a_n
\end{pmatrix}$
\\
\hline
$\mu_{\bw^T}$
&$a-1$
&$\prod\limits_{j=1}\limits^{n}a_j$
&$\sum\limits_{k=0}\limits^{n}(-1)^{n-k}\prod\limits_{j=1}\limits^{k}a_j$
\\\hline
$\soc(\bw^T)$
&$x^{a-2}$
&$\prod\limits_{i=1}^{n}x_i^{a_i-1}$
&$x_n^{a_n-2}\prod\limits_{i=1}^{n-1}x_i^{a_i-1}$\\
\hline
$x^{\bm}$
&$x^{m}$
&$\prod\limits_{i=1}^{n}x_i^{m_i}$
&$\prod\limits_{i=0}^{k-1}x_{n-2i}^{a_{n-2i}-1}\prod\limits_{j=1}^{n-2k}x_j^{m_j}$
\\
$\fS_{\bw^T}$
 &$0\leq m \leq a-2$ & $0\leq m_i\leq a_i-1$ &
 $0\leq k \leq \lfloor{n\over 2}\rfloor,
$\\
   &&&
$0\leq m_j \leq a_j-1-\de_{j}^{n-2k}$
 \\ \hline
\end{tabular}
\end{table}

In Table~\ref{table-basis} we collect some invariants of the dual polynomial $\bw^T$
including the exponent matrix~\eqref{eq:exp-matrix} $E_{\bw^T}=E^T_{\bw}$,
the Milnor number $\mu_{\bw^T}$
(computed using \eqref{milnor-number} and \eqref{weight-dimension}), and the socle
element~\eqref{eq:hess} $\soc(\bw^T)$.

In the last line of the table we present the basis
\begin{equation}
  \label{eq:basis}
\{x^\bm \, | \, \bm\in\fS_{\bw^T}\}
\end{equation}
 of $\cQ_{\bw^T}$ constructed in~\cite{K94, Kr10}.
This basis consists of monomials $x^\bm:=x_1^{m_1}\ldots x_n^{m_n}$ whose exponent vectors
$\bm=(m_1,\ldots, m_n)$ belong to the set $\fS_{\bw^T}$  of $n$-tuples of non-negative
integers described in Table~\ref{table-basis}.
In particular,  $|\fS_{\bw^T}|=\mu_{\bw^T}$ for any atomic polynomial $\bw^T$.

In the case of the chain polynomial $\bw^T_{\rm chain}$, the set $\fS_{\bw^T_{\rm chain}}$ is partitioned
\begin{equation} \label{chain-decomposition}
  \fS_{\bw^T_{\rm chain}}= \bigsqcup_{k=0}^{\lfloor{n\over 2}\rfloor}
  \fS^k_{\bw^T_{\rm chain}},
\end{equation}
where $\fS^k_{\bw^T_{\rm chain}}$ is the set of the exponents of the monomials \
$ x^{\bm}=\prod\limits_{i=0}^{k-1}x_{n-2i}^{a_{n-2i}-1}\prod\limits_{j=1}^{n-2k}x_j^{m_j}$ \ such
that $0\leq m_j < a_j-\de_{j}^{n-2k}. $
In particular,
\begin{equation}
\label{chain-size}
|\fS^k_{\bw^T_{\rm chain}}|=
\left\{
\begin{array}{ll}
(a_{n-2k}-1)\prod\limits_{j=1}^{n-2k-1}a_j, & k< n/2;\\
1, & k=n/2.
\end{array}\right.
\end{equation}

Let us introduce some terminology related to the bases~\eqref{eq:basis}.
\begin{df}  Let $\bw^T$ be an atomic polynomial.
\begin{itemize}
\item
  The basis $\{x^{\bm}\vert\bm\in\fS_{\bw^T}\}$ is called the
\emph{standard basis} of the Milnor ring $\cQ_{\bw^T}$.
\item
 Elements of $\fS_{\bw^T}$ are called standard vectors.
\item
 The standard vector of the socle element $\soc(\bw^T)$
is called the \emph{socle vector} and
is denoted by
 $\bs(\bw^T)$. It is the maximal element of $\fS_{\bw^T}$ in the lexicographical order.
\item
For a standard vector $\bm \in \fS_{\bw^T}$,
its \emph{complementary vector}
$\ov{\bm}=(\ov{m}_1,
\ldots, \ov{m}_n)$ is
 given by
\begin{equation}\label{dual-vector}
\ov{m}_i=\left\{
\begin{array}{ll}
m_i, &\text{ if }  \bm\in\fS^{k\geq1}_{\bw^T_{\rm chain}} \text{ and }  i> n-2k;\\
s(\bw^T)_i-m_i, & \text{otherwise}.
\end{array}\right.
\end{equation}
\end{itemize}
\end{df}
The standard vectors
from the following example will be important later in the discussion of loop polynomials.
\begin{example}
\label{example-loop}
If $\bw=\sum\limits_{i=1}^{n-1}x_i^{a_i}x_{i+1}+x_n^{a_n}x_1$ is a loop polynomial and $n$ is even.
There are two special standard vectors $\bm^{\rm odd}$ and $\bm^{\rm even}$ in $\fS_{\bw^T}$, which
are complementary to each other, with the components
$$
m^{\rm odd}_i=\left\{\begin{array}{ll}
a_i-1, & i  \text{ is odd};\\
0, & i \text{ is even};
\end{array}\right.
\quad
m^{\rm even}_i=\left\{\begin{array}{ll}
0, & i  \text{ is odd};\\
a_i-1, & i \text{ is even}.
\end{array}\right.
$$
\end{example}

\subsubsection{A basis of $\cHw$ via Chern characters}
\label{chern-basis}
The following result proved by Kreuzer~\cite{K94} (see also~\cite[Section 3.3]{Kr10})
was the first
indication about mirror symmetry for invertible singularities.
\begin{prop}
\label{twisted-rank}
For each invertible polynomial $\bw$,
the dimension of the space $\cHw$ is equal to $\mu_{\bw^T}$.
\end{prop}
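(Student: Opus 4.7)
The plan is to reduce the claim to the three atomic types via multiplicativity under Sebastiani--Thom sums, and then to verify each case by a sector-by-sector count matching the explicit formulas for $\mu_{\bw^T}$ in Table~\ref{table-basis}. This is essentially the calculation of Kreuzer~\cite{K94}.

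For the reduction step, if $\bw=\bw_1\oplus \bw_2$ on disjoint variables, then the exponent matrix is block diagonal, so $\bw^T=\bw_1^T\oplus \bw_2^T$ and $\mu_{\bw^T}=\mu_{\bw_1^T}\cdot \mu_{\bw_2^T}$. On the $A$-side, $G_\bw=G_{\bw_1}\times G_{\bw_2}$, the fixed subspaces split as $V^{(\gb_1,\gb_2)}=V^{\gb_1}\oplus V^{\gb_2}$, and the invariants factor as tensor products, giving $\dim \cH(\bw,G_\bw)=\dim \cH(\bw_1,G_{\bw_1})\cdot \dim \cH(\bw_2,G_{\bw_2})$. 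Thus it suffices to treat the atomic types in \eqref{atomic-type} one by one.

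Next, decompose the state space into sectors: a narrow $\gb$ (with $V^\gb=0$) contributes $\C$, while a broad $\gb$ contributes $\dim \cQ_{\bw_\gb}^{G_\bw}$, where the $G_\bw$-action on $H(\bw_\gb)=\cQ_{\bw_\gb}\cdot \bo_\gb$ is twisted by the determinant on the top form $\bo_\gb$. For the Fermat polynomial $x^a$, only the identity is broad and the unique invariance candidate $x^{a-1}$ already lies in the Jacobian ideal, so the total is $a-1=\mu_{x^a}$. For a loop polynomial, the constraints $\gamma_{i+1}=\gamma_i^{-a_i}$ together with $\gcd(\prod a_i,\ \prod a_i+(-1)^{n+1})=1$ imply that any $\gb\in G_\bw$ with $\gamma_i=1$ for some $i$ must be the identity; hence every nonidentity sector is narrow and contributes $|G_\bw|-1=\prod a_i+(-1)^{n+1}-1$, and a direct calculation shows the identity sector contributes $1+(-1)^n$, which matches the pair of special standard vectors of Example~\ref{example-loop} for even $n$ and vanishes for odd $n$, summing to $\prod a_i=\mu_{\bw^T}$.

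The main obstacle lies in the chain case, where many broad sectors proliferate. Here the constraints $\gamma_{i+1}=\gamma_i^{-a_i}$ force the fixed index set $\{i : \gamma_i=1\}$ to be a terminal segment $\{i_0,i_0+1,\ldots,n\}$, and the restriction $\bw_\gb$ is then a shorter chain (or a Fermat if $i_0=n$) on those remaining variables. I would stratify broad sectors by the starting index $i_0$ and, using \eqref{chain-entry} for the $\rho_j^{(i)}$ together with the relations \eqref{rho-constriant} of Proposition~\ref{broad-relation}, count the $G_\bw$-invariant monomials $x^\bm\bo_\gb$ in each stratum; the determinantal twist converts this into a system of linear congruences whose solutions, after suitable reindexing, are in bijection with the subset $\fS^k_{\bw^T_{\rm chain}}\subset \fS_{\bw^T_{\rm chain}}$ of \eqref{chain-decomposition} with size \eqref{chain-size}. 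Summing over $k$, together with the narrow contribution, then reproduces the alternating-sign formula $\mu_{\bw^T_{\rm chain}}=\sum_{k=0}^n(-1)^{n-k}\prod_{j=1}^k a_j$ from Table~\ref{table-basis}.
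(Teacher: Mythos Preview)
The paper does not supply its own proof of Proposition~\ref{twisted-rank}; it is stated as Kreuzer's result~\cite{K94} (see also~\cite[Section~3.3]{Kr10}), and the surrounding text in Section~\ref{chern-basis} then constructs the mirror map $\th$ of Definition~\ref{mirror-map} sending the standard basis $\{x^\bm:\bm\in\fS_{\bw^T}\}$ bijectively onto an explicit basis of $\cH(\bw,G_\bw)$ built from narrow generators and Chern characters of Koszul matrix factorizations (equations~\eqref{chern-odd}, \eqref{chern-even}, \eqref{chern-chain}), which implicitly re-establishes the dimension count. Your sketch is exactly Kreuzer's sector-by-sector computation, so it agrees with the approach the paper invokes.

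Two places where your outline leaves real work undone are worth flagging. In the loop case the assertion that the identity sector has dimension $1+(-1)^n$ is stated but not argued; in the chain case, your stratification by terminal fixed segments $\{i_0,\ldots,n\}$ is correct, but you have not explained why only the \emph{even}-length segments survive: one must check that for odd-length segments the determinantal twist on $\bo_\gb$ forces $H(\bw_\gb)^{G_\bw}=0$, and that for each even-length segment the invariant space is exactly one-dimensional. This is the content of Kreuzer's computation and is precisely what the paper packages in Lemma~\ref{chain-broad} together with the one-dimensionality of the spans of $\Ch(K_\bm)$ in~\eqref{chern-chain}. Once that is done, the count of $\gb\in G_\bw$ with fixed segment $\{n-2k+1,\ldots,n\}$ (namely $(a_{n-2k}-1)\prod_{j<n-2k}a_j$) matches $|\fS^k_{\bw^T_{\rm chain}}|$ in~\eqref{chain-size}, and the sum over $k$ recovers $\mu_{\bw^T_{\rm chain}}$ as you indicate.
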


According to~\cite[Section 2.4]{Gue}, there exists a basis of $\cHw$ represented by the
Chern characters of Koszul matrix factorizations. To describe this basis explicitly we
introduce a map $\fI$ which assigns to an $n$-tuple $\bm$ of non-negative integers an
element in the symmetry group $G_\bw$ given by
\begin{equation}
\label{group-element}
\fI(\bm):=
\prod_{j=1}^{n}\brho_j^{m_j+1}=J_\bw\prod_{j=1}^{n}\brho_j^{m_j}
\in G_{\bw}.
\end{equation}
We have the following two special values of this map:
\begin{equation}
\label{j-and-inverse}
\fI({\bf 0})=J_\bw  \mathrm{\ \ and\ \  } \fI(\bs(\bw^T))=J_{\bw}^{-1}.
\end{equation}
For chain polynomials the  formula \eqref{chain-entry} implies the following result.
\begin{lem}
\label{chain-broad}
If $\bw=\sum\limits_{i=1}^{n-1}x_i^{a_i}x_{i+1}+x_n^{a_n}$
and $\bm\in\fS^{k}_{\bw^T_{\rm chain}}$, then the element
\begin{equation}
\label{chain-element}
\fI(\bm)=J_{\bw}\prod_{i=0}^{k-1}\brho_{n-2i}^{a_{n-2i}-1}\prod_{j=1}^{n-2k}\brho_j^{m_j}
\end{equation}
fixes the variables $x_n, x_{n-1}, \ldots, x_{n-2k+1}$.
It is narrow if $k=0$ and broad if $k\geq 1$.
\end{lem}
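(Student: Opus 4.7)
The plan is to reduce the two claims to an explicit recursion on the single scalar
$$T_i := \sum_{j=i}^{n} \rho_j^{(i)}(m_j+1),$$
whose residue class mod $\Z$ records the $i$-th component of $\fI(\bm)$. Indeed, from \eqref{group-element} and $q_i = \sum_j \rho_j^{(i)}$, the $i$-th component of $\fI(\bm)$ equals $\exp\!\bigl(2\pi i \sum_{j=1}^{n} \rho_j^{(i)}(m_j+1)\bigr)$, and the chain formula \eqref{chain-entry} kills the terms with $j<i$, yielding $T_i$. So $x_i$ is fixed by $\fI(\bm)$ if and only if $T_i\in\Z$.

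First I would establish the recursion $a_i T_i + T_{i+1} = m_i + 1$ with boundary $T_{n+1} = 0$. This follows from \eqref{chain-entry} by pulling out the diagonal term $\rho_i^{(i)} = 1/a_i$ and applying the identity $a_i\rho_j^{(i)} = -\rho_j^{(i+1)}$ (valid for $j>i$) to the remaining terms. Equivalently, one can read it off Proposition \ref{broad-relation}(i). With this recursion in hand, $T_n = (m_n+1)/a_n$ is the base case.

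For $\bm \in \fS^k_{\bw^T_{\rm chain}}$ with $k \geq 1$, the defining shape \eqref{chain-decomposition} forces $m_{n-2l} = a_{n-2l}-1$ and $m_{n-2l-1} = 0$ for $l = 0, 1, \ldots, k-1$. Descending induction via the recursion then gives $T_{n-2l} = 1$ and $T_{n-2l-1} = 0$ for the same range, all integers; consequently the indices $i = n, n-1, \ldots, n-2k+1$ all yield trivial components, which is exactly the fixed-variable statement. In particular, $\fI(\bm)$ is broad for $k\ge 1$.

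For the narrow claim ($k=0$), I would clear denominators by setting $N_i := T_i\cdot a_i a_{i+1}\cdots a_n \in \Z$, so the recursion reads $N_i = (m_i+1)(a_{i+1}\cdots a_n) - N_{i+1}$, and $T_i \in \Z$ iff $a_i\cdots a_n \mid N_i$. Since $\bm \in \fS^0_{\bw^T_{\rm chain}}$ imposes $m_n \leq a_n - 2$, the base case gives $1 \leq N_n \leq a_n - 1$, so $a_n \nmid N_n$. For the induction, reducing mod $a_{i+1}\cdots a_n$ yields $N_i \equiv -N_{i+1}$, so the nonvanishing of $N_{i+1}$ mod $a_{i+1}\cdots a_n$ propagates to $N_i$, and a fortiori $a_i a_{i+1}\cdots a_n \nmid N_i$. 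Hence $T_i \notin \Z$ for every $i$, i.e., $\fI(\bm)$ fixes no variable and is narrow. The only substantive point is spotting the recursion; once it is in place, both halves of the lemma are elementary induction and modular arithmetic, so I do not anticipate any essential obstacle.
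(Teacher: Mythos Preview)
Your proposal is correct. The paper does not give a detailed proof; it merely asserts that the lemma ``implies'' from the chain formula \eqref{chain-entry}, so your argument is a fully worked-out version of what the paper leaves to the reader. Your recursion $a_iT_i+T_{i+1}=m_i+1$ is exactly the specialization of Proposition~\ref{broad-relation}(i) (or equivalently the identity $a_i\rho_j^{(i)}=-\rho_j^{(i+1)}$ read off from \eqref{chain-entry}), and the induction for $k\ge 1$ together with the modular-arithmetic argument for $k=0$ are both sound.
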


Now we proceed with defining a basis of $\cHw$.
For each narrow $\gb\in G_\bw$, we set $\one_{\gb}$ to be the Chern character of the
trivial matrix factorization of $\bw_\gb=0$:
$$\cH_\gb:=H(\bw_\gb)^{G_{\bw}}\cong\C\{\one_{\gb}\}.$$

To construct the bases of all broad sectors $\cH_\gb$ we need to consider two cases.

\ \\
 \noindent {\sf Case} (i): $\bw=\sum\limits_{i=1}^{n-1}x_i^{a_i}x_{i+1}+x_n^{a_n}x_1$ is a loop polynomial.

\

If $n$ is odd, a direct calculation shows that there
are no broad sectors, since no broad elements can be invariant under the $G_\bw$-action
as required by~\eqref{eq:fjr} (see~\cite[Lemma 1.7]{Kr9}).
So we only need to consider the cases when $n$ is even.
In the notation of Example \ref{example-loop} we have
$$\fI(\bm^{\rm odd})=\fI(\bm^{\rm even})=1\in G_\bw.$$
We consider two Koszul matrix factorizations of $(\A^n, -\bw)$:
\begin{equation}
\label{loop-chern-sign}
\begin{split}
  K_{\rm odd}:=&\bigotimes_{\text{j is even}}\big\{-(x_j^{a_j}+x_{j+1}^{a_{j+1}}x_{j+2}), x_{j+1}\big\},
\mathrm{\ \ and}\\
K_{\rm even}:=&\bigotimes_{\text{j is odd}}\big\{-(x_j^{a_j}+x_{j+1}^{a_{j+1}}x_{j+2}), x_{j+1}\big\}.
\end{split}
\end{equation}
By the supertrace formula \eqref{super-trace}, we have
\begin{equation}\label{chern-odd}
\Ch(K_{\rm odd})=\left(\prod_{j \textit{ is odd}}x_j^{a_j-1}-\prod_{j \textit{ is
      even}}(-a_j x_j^{a_j-1})\right)\bigwedge_{j=1}^{n}dx_j
\end{equation}
and
\begin{equation}
\label{chern-even}
\Ch(K_{\rm even})=\left(\prod_{j \textit{ is odd}}(-a_j x_j^{a_j-1})-\prod_{j \textit{ is
      even}}x_j^{a_j-1}\right)\bigwedge_{j=1}^{n}dx_j\,.
\end{equation}
These Chern characters span the two-dimensional vector space
$\cH_{\gb=1}$.
By \eqref{mukai-pairing} and \eqref{PV-pairing}, we have
\begin{equation}
\label{loop-metric}
\begin{pmatrix}
\langle\Ch(K_{\rm odd}), \Ch(K_{\rm odd})\rangle&\langle\Ch(K_{\rm odd}), \Ch(K_{\rm even})\rangle\\
\langle\Ch(K_{\rm even}), \Ch(K_{\rm odd})\rangle&\langle\Ch(K_{\rm even}), \Ch(K_{\rm even})\rangle\\
\end{pmatrix}
=
\begin{pmatrix}
\prod\limits_{\text{j is even}}(-a_j)
& 1\\
1 &\prod\limits_{\text{j is odd}}(-a_j)
\end{pmatrix}
\end{equation}

\

\ \\
 \noindent {\sf Case} (ii): $\bw=\sum\limits_{i=1}^{n-1}x_i^{a_i}x_{i+1}+x_n^{a_n}$ is a chain polynomial.

\

If $\bm\in\fS^{k\geq1}_{\bw^T_{\rm chain}}$, then $\fI(\bm)\in G_\bw$
fixes
$x_{n-2k+1}, x_{n-2k+2}, \ldots, x_{n}$.
Consider a Koszul matrix factorization  of $(\A^n, -\bw)$
\begin{equation}
\label{chain-koszul}
K_\bm=\left(\bigotimes_{t=n/2-k+1}^{n/2-1}\big\{-(x_{2t-1}^{a_{2t-1}}+x_{2t}^{a_{2t}}x_{2t+1}), x_{2t}\big\}
\right)\ot\big\{(-(x_{n-1}^{a_{n-1}}+x_{n}^{a_n-1}), x_n)\big\}
\end{equation}
Using the supertrace
formula \eqref{super-trace}, the rank one vector space $H_{\fI(\bm)}$ is spanned by
\begin{equation}
\label{chern-chain}
\Ch(K_\bm)=\prod_{\substack{j>n-2k\\ 2\nmid j}} (-a_{j}x_{j}^{a_{j}-1})\bigwedge_{j=n-2k+1}^{n}dx_j\in H_{\fI(\bm)}.
\end{equation}
Again, by \eqref{mukai-pairing} and \eqref{PV-pairing}, we have
\begin{equation}
\label{chain-pairing}
\langle \Ch(K_\bm), \Ch(K_\bm)\rangle=\prod_{j>n-2k, j \textit{ is odd}} (-a_{j}).
\end{equation}

\subsection{A pairing-preserving mirror map}\label{sec:pair-pres-mirror-map}
In~\cite{K94}, a linear map $\theta:\cQ_{\bw^T}\to\cHw$ for each atomic invertible polynomial $\bw$
is constructed. We review this construction here.

Recall that the map $\fI: \N^{n}
 \to G_{\bw}$  is defined in \eqref{group-element}.
If we restrict the map to the set of standard vectors $ \fS_{\bw^T}\subset \N^{n}$, it is almost
one-to-one. The only exception happens when $\bw$ is a loop polynomial
with even number of variables, and in this case, we have
$$\fI(\bm^{\rm odd})=\fI(\bm^{\rm even}).$$
\begin{df}
\label{mirror-map}
The \emph{mirror map} is a linear map
\begin{equation}
  \label{eq:mirror-map}
\th: \cQ_{\bw^T}\longrightarrow \cHw
\end{equation}
defined as follows.
\begin{itemize}
\item If $\fI(\bm)\in G_\bw$ is narrow for $\bm\in\fS_{\bw^T}$, then $\th(x^\bm)=\one_{\fI(\bm)}$;
\item if $\bw=\sum\limits_{i=1}^{n-1}x_i^{a_i}x_{i+1}+x_n^{a_n}x_1$ is a loop polynomial with even
  number of variables, then
$$\th(x^{\bm^{\rm odd}})=\Ch(K_{\rm odd}), \quad \th(x^{\bm^{\rm even}})=\Ch(K_{\rm even});$$
\item if $\bw$ is a chain polynomial and $\bm\in\fS^{k\geq1}_{\bw^T_{\rm chain}}$, then
$$\th(x^{\bm})=\Ch(K_{\bm}).$$
\end{itemize}
\end{df}
We sometimes denote the image $\th(x^\bm)$ of $x^\bm$ by $\th(\bm)$.
We call the vector $\bm$, monomial $x^\bm$, or the value $\th(\bm)$ \emph{narrow}
if the element $\fI(\bm)\in G_\bw$ is narrow, that is, ${\rm Fix}(\fI(\bm))=\{0\}\subset \A^n.$
Otherwise, we call it {\it broad}.
The above discussion can be summarized in the following form.
\begin{lem}
\label{broad-classify}
For any atomic polynomial $\bw$, the element $\th(\bm)\in  \cHw$ is broad
in one of the following cases:
$$\bm=\bm^{\rm odd}, \ \bm=\bm^{\rm even}, \mathrm{\ or\ } \bm\in\fS^{k\geq1}_{\bw^T_{\rm chain}}.$$
\end{lem}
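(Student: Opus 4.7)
The plan is to observe that the diagonal element $\fI(\bm) \in G_\bw$ fixes the $i$-th coordinate of $\A^n$ precisely when the $i$-th phase
\[
  \alpha_i(\bm) := q_i + \sum_{j=1}^n \rho_j^{(i)} m_j
\]
is an integer, since the $i$-th component of $\fI(\bm)$ equals $e^{2\pi i \alpha_i(\bm)}$ by the definitions \eqref{group-element} and \eqref{rhoj}. Thus the lemma reduces to identifying, for each atomic type of $\bw$, those $\bm \in \fS_{\bw^T}$ for which some $\alpha_i(\bm)$ lies in $\Z$.

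The main tool will be a linear recurrence relating consecutive phases. Combining the two parts of Proposition \ref{broad-relation}, namely $\rho_{j-1}^{(i)} + a_j \rho_j^{(i)} = \delta_j^i$ and $a_i q_i + q_{i+1} = 1$ (with the convention $q_{n+1} = 0$ for chains and $q_{n+1} = q_1$ for loops), a direct computation gives
\[
  a_i \alpha_i(\bm) + \alpha_{i+1}(\bm) = 1 + m_i,
\]
read linearly for chains with $\alpha_{n+1}(\bm) := 0$, and cyclically for loops.

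The Fermat case $\bw = x^a$ is immediate, since the single phase $\alpha_1 = (1+m)/a$ with $1 \leq m+1 \leq a-1$ is never an integer. For the chain polynomial and $\bm \in \fS^0_{\bw^T_{\rm chain}}$, I would run the recurrence downward from $\alpha_n = (1+m_n)/a_n \in (0,1) \setminus \Z$ (using $m_n \leq a_n - 2$): inductively, $a_i \alpha_i = 1 + m_i - \alpha_{i+1}$ lies in the open interval $(m_i, m_i + 1)$, which has length one and so contains no multiple of $a_i \geq 2$, while the bound $m_i \leq a_i - 1$ keeps $\alpha_i \in (0, 1)$. Hence $\fI(\bm)$ fixes no coordinate, so it is narrow. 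The remaining case $\bm \in \fS^{k \geq 1}_{\bw^T_{\rm chain}}$ is already handled by Lemma \ref{chain-broad}.

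The loop case is the main obstacle, since the cyclic recurrence makes the argument global rather than inductive. Here the recurrence shows that as soon as one $\alpha_i$ is an integer, all of them are, so $\fI(\bm)$ is broad iff $\fI(\bm) = 1 \in G_\bw$. Assuming all $\alpha_i \in \Z$, a max-argument on $|\alpha_i|$ using the recurrence together with $0 \leq m_i \leq a_i - 1$ and $a_i \geq 2$ yields $|\alpha_i| \leq 2$, and then two short elimination steps rule out the values $\pm 2$ and $-1$, leaving $\alpha_i \in \{0, 1\}$. The recurrence then forces a strict alternation: $\alpha_i = 0 \Rightarrow (m_i, \alpha_{i+1}) = (0, 1)$, and $\alpha_i = 1 \Rightarrow (m_i, \alpha_{i+1}) = (a_i - 1, 0)$. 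Closing the cycle requires $n$ to be even, in which case exactly two patterns remain, matching precisely the standard vectors $\bm^{\rm odd}$ and $\bm^{\rm even}$ of Example \ref{example-loop}.
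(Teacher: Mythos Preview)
Your argument is correct. The paper itself does not give a proof of this lemma; it presents it as a summary of the basis construction in Section~\ref{chern-basis}, invoking Lemma~\ref{chain-broad} for the chain case and, for the loop case, the explicit identification $\fI(\bm^{\rm odd})=\fI(\bm^{\rm even})=1$ together with the almost-injectivity of $\fI|_{\fS_{\bw^T}}$ and the dimension count of Proposition~\ref{twisted-rank} (ultimately from Kreuzer~\cite{K94}). Your route is genuinely more elementary and self-contained: you extract the phase recurrence $a_i\alpha_i(\bm)+\alpha_{i+1}(\bm)=1+m_i$ from the identities in Proposition~\ref{broad-relation} and run a direct combinatorial classification, never appealing to Kreuzer's analysis or to a dimension count. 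The loop argument (forward propagation of integrality around the cycle, the max-bound $|\alpha_i|\le 2$, elimination of $\pm 2$ and $-1$, then forced alternation) is clean and complete. One cosmetic simplification in the chain step: since the open interval $(m_i,m_i+1)$ contains no integers at all, $a_i\alpha_i\notin\Z$ immediately, so you need not mention multiples of $a_i$.
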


\

Now define the \emph{normalized residue} $\wt\Res_{\bw^T}$ by rescaling  the residue
$\Res_{\bw^T}$~\eqref{residue-formula} so that
\begin{equation}
\label{normalize-residue}
\wt\Res_{\bw^T}({\rm soc}(\bw^T))=1,
\end{equation}
where $\soc(\bw^T)$ is the socle element~\eqref{eq:hess}.
We have a nondegenerate symmetric bilinear pairing on $\cQ_{\bw^T}$,
given by $\wt\Res_{\bw^T}(\cdot ,\cdot )$.
   The nonzero values of this pairing on the basis elements are given by
\begin{equation}
\label{chain-residue}
\begin{array}{lll}
\wt\Res_{\bw^T}(x^{\bm^{\rm even}},x^{\bm^{\rm even}})&=\prod\limits_{j \text{ is odd}}(-a_j),&
\\
\wt\Res_{\bw^T}(x^{\bm^{\rm odd}},x^{\bm^{\rm odd}})&=\prod\limits_{j \text{ is even}}(-a_j),&
\\
\wt\Res_{\bw^T}(x^{\bm}, x^{\ov{\bm}})&=
\prod\limits_{i=0}^{k-1}(-a_{n-2i-1}), &\text{ if } \bm\in\fS^{k\geq1}_{\bw^T_{\rm chain}};\\
\wt\Res_{\bw^T}(x^{\bm}, x^{\ov{\bm}})&=1, & \text{otherwise}.
\end{array}
\end{equation}

By definition~\eqref{PV-pairing} of the A-model pairing $\langle \ , \ \rangle$ on $\cHw$,
we have $\langle \th(\bm), \th(\ov{\bm}) \rangle=1$ if $\bm$ is narrow.
By comparing the A-model
calculations \eqref{loop-metric} and \eqref{chain-pairing} with the normalized residue
calculations in \eqref{normalize-residue} and \eqref{chain-residue},
we obtain the following result.

\begin{prop}
\label{pairing-preserving}
The normalized residue pairing on $\cQ_{\bw^T}$ and the pairing \eqref{PV-pairing} on $\cHw$
correspond to each other via the mirror map \eqref{eq:mirror-map}.
That is, for $\bm, \bm'\in\fS_{\bw^T}$, we have
$$\wt\Res_{\bw^T}(x^\bm, x^{\bm'})=\langle\th(\bm), \th(\bm')\rangle.$$
\end{prop}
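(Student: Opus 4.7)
The plan is to verify the identity case by case according to the atomic type of $\bw$ and the narrow/broad nature of the basis elements. The proposition is essentially a comparison of explicitly computed formulas: the MF pairing values are computed in \eqref{loop-metric} and \eqref{chain-pairing} for broad insertions, and reduce to the Mukai formula \eqref{mukai-pairing} together with the metric axiom \eqref{metric-axiom} for narrow ones, while the normalized residue values are listed in \eqref{chain-residue}.

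The first step is to show that both pairings have the same support on the bases. The MF pairing \eqref{PV-pairing} pairs $\cH_\gb$ only with $\cH_{\gb^{-1}}$, so one must verify that $\theta(\bm)$ and $\theta(\bm')$ lie in mutually inverse sectors precisely when $\wt\Res_{\bw^T}(x^\bm, x^{\bm'})$ is nonzero. For narrow $\bm$, paired with $\bm' = \ov\bm$, this follows by direct computation:
\begin{equation*}
\fI(\bm)\fI(\ov\bm) = J_\bw^2 \prod_j \rho_j^{m_j + \ov m_j} = J_\bw^2 \prod_j \rho_j^{s(\bw^T)_j} = J_\bw \cdot \fI(\bs(\bw^T)) = J_\bw \cdot J_\bw^{-1} = 1,
\end{equation*}
using \eqref{group-element} and \eqref{j-and-inverse}. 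For broad chain $\bm \in \fS^{k\geq 1}_{\bw^T_{\rm chain}}$, the analogous verification that $\fI(\ov\bm) = \fI(\bm)^{-1}$ requires using the relations $\rho_{j-1}\rho_j^{a_j} = 1$ (a consequence of \eqref{rho-constriant}) to telescope the extra factors arising from the fixed coordinates. For the broad loop case with even $n$, both $\theta(\bm^{\rm odd})$ and $\theta(\bm^{\rm even})$ lie in the untwisted sector $\cH_1$ (as noted before \eqref{chern-odd}), which pairs with itself.

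Once the supports are matched, I would compare the scalar values in each sub-case. For narrow-narrow pairings, the metric axiom \eqref{metric-axiom} gives $\lan \one_\gb, \one_{\gb^{-1}}\ran = \lan \zb_* \bo, \bo\ran_{\bw_\gb}$ on the trivial fixed subspace, which is $1$ after normalization, matching the last line of \eqref{chain-residue}. For the broad loop even-$n$ case, inserting \eqref{chern-odd} and \eqref{chern-even} into the Mukai pairing \eqref{mukai-pairing} and evaluating via \eqref{hessian-residue} reproduces the matrix \eqref{loop-metric}, which is then compared to the first two lines of \eqref{chain-residue}. The broad chain case is analogous: \eqref{chern-chain} substituted into \eqref{mukai-pairing} yields \eqref{chain-pairing}, matching the third line of \eqref{chain-residue}.

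The main obstacle is careful sign bookkeeping. The Mukai pairing carries a $(-1)^{n\choose 2}$ factor, the MF pairing involves a $\zb_*$ twist, the supertrace formulas for the Chern characters produce systematic $(-a_j)$ factors on the alternating fixed positions, and the normalized residue on $\cQ_{\bw^T}$ contributes its own signs from the Jacobian relations for $\bw^T$. Verifying that all these signs cancel correctly, in particular checking that the $\zb_*$ action on the broad Chern characters contributes exactly the phase needed to match the dual-side residue signs, is the delicate step, since both the loop even-$n$ and broad chain cases depend sensitively on the parity of the coordinates.
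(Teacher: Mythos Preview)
Your proposal is correct and follows the same approach as the paper: a case-by-case comparison of the explicitly computed pairing values on both sides. In fact the paper's own proof is a single sentence citing \eqref{loop-metric}, \eqref{chain-pairing}, \eqref{normalize-residue}, and \eqref{chain-residue}, because all the computational content (including the sign bookkeeping you flag as the ``main obstacle'') has already been absorbed into those displayed formulas; your write-up simply unpacks what that citation entails. One small correction: in the narrow--narrow case you invoke the metric axiom \eqref{metric-axiom}, but what you actually use is the pairing definition \eqref{PV-pairing} --- the metric axiom relates the pairing to a three-point correlator, which is not needed here.
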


\subsection{Generators, Jacobian relations, and Frobenius algebras}\label{sec:gen-Jac-rel-Fr-alg}

In this section we compute the Frobenius algebra structure on the state space $\cHw$ of the \PVs CohFT
and prove Theorem~\ref{frob-isom} (Theorem~\ref{ring-iso} below) verifying Mirror symmetry
at the topological level.

Let $\bm_i$'s be standard vectors in $\fS_{\bw^T}$.
Using~\eqref{eq:froben} we can express the product on $\cHw$ in this basis as follows
\begin{equation}
\label{multiplication-formula}
\th(\bm_1)\bullet\th(\bm_2)
=\sum_{\bm_3, \bm_4\in \fS_{\bw^T}}
\langle\th(\bm_1), \th(\bm_2),\th(\bm_3)\rangle_{0,3}^\PV
\cdot
\eta^{\th(\bm_3), \th(\bm_4)}
\cdot
\th(\bm_4).
\end{equation}
Here $(\eta^{i,j})$ is the inverse of the matrix of the pairing~\eqref{PV-pairing}
in the basis $\{\th(\bm)\}$, where $\bm$'s are the standard vectors in $\fS_{\bw^T}$.

Our computation of the correlator
$\langle\th(\bm_1), \th(\bm_2),\th(\bm_3)\rangle_{0,3}^\PV$ with narrow insertions $\th(\bm_1),
\th(\bm_2),$ and $\th(\bm_3)$ will rely on the Selection rule~\eqref{select},
the Concavity Axiom~\cite[Corollary 5.5.3]{PV16}, and the Index Zero Axiom~\cite[Proposition 5.7.1]{PV16}.

To compute three-point correlators with broad insertions,
we will need more tools and so we start with some preparation.

\subsubsection{Chain type reduction}

Let $\bw(x_1,\ldots,x_n)$ be a nondegenerate quasihomogeneous polynomial of the form
\begin{eqnarray*}
\bw(x_1,\ldots,x_n)&=&x_1^{a_1}x_2+x_2^{a_2}x_3+\ldots+x_{2s-1}^{a_{2s-1}}x_{2s}+x_{2s}^{a_{2s}}x_{2s+1}^m\\
&&+w_0(x_{2s+1},\ldots,x_n),
\end{eqnarray*}
where  $a_i\ge 1$, $a_{2i}\ge 2$, $m\ge 0$, and $2s\le n$
(and $m=0$ if $2s=n$).
Let us consider the Koszul matrix factorization of $w-w_0$,
$$K_0:=\{x_1^{a_1}+x_2^{a_2-1}x_3, x_2\}\ot\ldots\ot \{x_{2s-1}^{a_{2s-1}}+x_{2s}^{a_{2s}-1}x_{2s+1}^m, x_{2s}\}.$$
Note that it has a natural $G_\bw$-equivariant structure.
Set $V=\A^n$, with coordinates $x_1,\ldots,x_n$, $V_0=\A^{[2s+1,n]}$, with coordinates $x_{2s+1},\ldots,x_n$,
and let $p:V\to V_0$ be the natural projection.
For any $\gb\in G_\bw$, consider the functor
$$\Phi_{K_0,\gb}:\MF_{p(G)}(\bw_{0,p(\gb)})\to \MF_{G_\bw}(\bw_\gb): E\mapsto K_0\ot p^*E,$$
where $p:V^\gb\to V^\gb\cap \A^{[2s+1,n]}$ (resp., $(\Gm)^n\to (\Gm)^{[2s+1,n]}$) is the
coordinate projection.
Let $\phi_{K_0,\gb}$ be the induced map on Hochschild homology.

\begin{prop}\label{chain-reduction-prop}
In the above situation, assume that we are given $\gb_1,\gb_2,\gb_3\in G_\bw$, such that
$\cSr_0(\gb_1,\gb_2,\gb_3)$ is non-empty,
$$V^{\gb_1}=0, \ \ V^{\gb_2}\subset \A^{[2s+1,n]},$$
$$S_1=\{2,4,\ldots,2s\}, \ \ V^{\gb_3}=\A^{[1,2s]}\times V^{\gb_2},$$
and $\deg L_j=-1$ for every $j$ with $\Sigma_j=\emptyset$.
Then for any $h_2\in\cH_{\bw_0,p(\gb_2)}=\cH_{\bw,\gb_2}$, $h_3\in \cH_{\bw_0,p(\gb_3)}=\cH_{\bw,\gb_2}$, one has
\begin{equation}
\label{mf-chain-reduction}
\langle 1_{\gb_1}, h_2, \phi_{K_0,\gb_3}(h_3)\rangle_{0,3}^{\PV}=\prod_{i=1}^s(-a_{2i-1})\cdot
\langle h_2, h_3\rangle_{\bw_{\gb_2}}.
\end{equation}
\end{prop}

\begin{proof}
By Corollary \ref{P622-cor}, we have
$$\langle 1_{\gb_1}, h_2, \phi_{K_0,\gb_3}(h_3)\rangle_{0,3}^{\PV}
=\langle h_2,f_3(\phi_{K_0,\gb_3}(h_3))\rangle_{\bw_{\gb_2}},$$
where $f_3:H(\bw_{\gb_3})\to H(\bw_{\gb_2})$ is induced by the restriction to
the subspace $x_2=x_4=\ldots=x_{2s}=0$ followed by the push-forward with respect to the projection
$$p_0:\A^{\{1,3,\ldots,2s-1\}}\times V^{\gb_2}\to V^{\gb_2}.$$
Thus, to calculate $f_3(\phi_{K_0,\gb_3}(h_3))$ we have to calculate the endofunctor of $\MF(\bw_{\gb_2})$,
$$
E\mapsto p_{0*}(\Phi_{K_0,\gb_3}(E)|_{x_2=\ldots=x_{2s}=0}) =
p_{0*}(K_0|_{x_2=\ldots=x_{2s}=0}\boxtimes E) \simeq R\Ga(K_0|_{x_2=\ldots=x_{2s}=0})\ot E.
$$

It remains to observe that
$$K_0|_{x_2=\ldots=x_{2s}=0}=\{x_1^{a_1},0\}\ot\ldots\ot \{x_{2s-1}^{a_{2s-1}},0\}.$$
Hence, in the Grothendieck group of $\MF(\A^{\{1,3,\ldots,2s-1\}},0)$,
this is equal to $(-a_1)\ldots(-a_{2s-1})$ times the class of the stabilization of the origin.
Therefore, we get
$$f_3(\phi_{K_0,\gb_3}(h_3))=\prod_{i=1}^s(-a_{2i-1})\cdot h_3,$$
which implies our formula.
\end{proof}

\subsubsection{Generators}
Using Lemma \ref{broad-classify}, we can classify
   all broad monomials in one variable.
\begin{crl}
\label{broad-element}
Let $\bw$ be one of the atomic invertible polynomials in Table~\eqref{table-atomic}.
If $x_j^{\ell}\in\fS_{\bw^T}$, then $\th(\ell\cdot v_j)$
is broad in exactly one of the following two cases:
\begin{itemize}
\item $\bw$ is a loop polynomial in two variables and $\ell=a_j-1$; \quad or

\item $\bw$ is of the chain type and $(j,\ell)=(n,a_n-1)$.
\end{itemize}

Moreover, in these cases, the element $\fI(v_i)=J_\bw\brho_i\in G_{\bw}$ is broad if
\begin{itemize}
\item $\bw=x^{a_1}_1x_2+x_2^{a_2}x_1$ and $a_i=2$;
\item $\bw=x_1^{a_1}x_2+\ldots+x_{n-1}^{a_{n-1}}x_n+x_n^2$ and $i=n$.
\end{itemize}
\end{crl}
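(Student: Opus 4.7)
The plan is to apply Lemma~\ref{broad-classify} directly and enumerate which single-variable monomials $x_j^{\ell}$ (or equivalently, which vectors of the form $\ell\cdot v_j$) land in one of its three broadness classes $\bm^{\rm odd}$, $\bm^{\rm even}$, $\fS^{k\ge 1}_{\bw^T_{\rm chain}}$. Since a Fermat $\bw^T=x^{a}$ admits neither $\bm^{\rm odd},\bm^{\rm even}$ nor the chain decomposition, the Fermat case contributes no broad element, and I only have to analyze loops and chains.

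For the loop case, the only candidates for broadness are $\bm^{\rm odd}$ and $\bm^{\rm even}$, which are defined precisely when $n$ is even. From their explicit description in Example~\ref{example-loop}, $\bm^{\rm odd}$ has exponent $a_j-1\ge 1$ at \emph{every} odd index $j$, so it has support in a single variable exactly when there is only one odd index, i.e.\ $n=2$. In that case $\bm^{\rm odd}=(a_1-1,0)$ and $\bm^{\rm even}=(0,a_2-1)$, which fits the form $\ell=a_j-1$ of case~(1). This also resolves the second statement in the loop case: $v_i$ is a single $1$ at position $i$, and matching $v_i=\bm^{\rm odd}$ or $\bm^{\rm even}$ forces simultaneously $n=2$ and $a_i-1=1$, i.e.\ $a_i=2$, giving the two-variable loop with $a_i=2$.

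For the chain case, Lemma~\ref{broad-classify} identifies broadness with $\bm\in\fS^{k\ge 1}_{\bw^T_{\rm chain}}$. Such an $\bm$ has the shape
\[
x^{\bm}=x_n^{a_n-1}\prod_{i=1}^{k-1}x_{n-2i}^{a_{n-2i}-1}\prod_{j=1}^{n-2k}x_j^{m_j},
\]
and each of the $k$ leading factors carries exponent $a_{n-2i}-1\ge 1$ since $a_r\ge 2$ for all $r$. Therefore a single-variable monomial forces $k=1$ and all remaining $m_j=0$, leaving $x_n^{a_n-1}$, i.e.\ $(j,\ell)=(n,a_n-1)$; when $a_n=2$ this specializes to the alternative $(a_n,j,\ell)=(2,n,1)$. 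Applying the same dichotomy to $v_i$ itself: $v_i\in\fS^{k\ge 1}$ requires $k=1$, the mandatory factor $x_n^{a_n-1}$ to be the single nonzero component of $v_i$, hence $i=n$ and $a_n-1=1$, giving exactly the chain with $x_n^2$ tail and $i=n$.

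Finally, for the second assertion I observe that $\th(v_i)$ lies in the sector labeled by $\fI(v_i)=J_\bw\rho_i$, so $\fI(v_i)$ is broad iff $\th(v_i)$ is broad; combining the two bullet analyses above then yields exactly the stated alternative. The enumeration is essentially a bookkeeping argument against Table~\ref{table-basis} and the socle formulas, and I do not foresee any genuine obstacle: the only subtlety is remembering that the constraint $a_r\ge 2$ is what rules out $k\ge 2$ single-variable chain monomials and odd-$n$ (or $n\ge 4$ even) single-variable loop broad monomials.
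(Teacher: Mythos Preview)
Your proof is correct and follows exactly the paper's approach, which is simply to invoke Lemma~\ref{broad-classify} and read off which single-variable standard monomials fall into the broad classes; you have supplied the case analysis that the paper leaves implicit. One minor caveat: your reduction of the ``Moreover'' clause to the first part via $\th(v_i)$ presupposes $v_i\in\fS_{\bw^T}$, which holds in every case except the degenerate Fermat $\bw=x^2$ (where $\fI(v_1)=1$ is broad and is absorbed by reading $x^2$ as the $n=1$ chain with $a_n=2$, or checked directly).
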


Let $v_j$ be the $n$-tuple of integers whose $j$-th component is $1$ and all other
components are zero. We define
\begin{equation}
\label{j-generator}
\th_j:=\th(v_j).
\end{equation}
We want to show that the elements $\th_1,\ldots,\th_n$ generate $\cHw$ as an algebra.
This will follow immediately from the product formula \eqref{additive} that we will now prove.
 \begin{prop}
 \label{additive-relation}
If both $\bm$ and $\bm+v_j$ are standard vectors in $\fS_{\bw^T}$, then
\begin{equation}
\label{additive}
\th_j\bullet\th(\bm)=\th(\bm+v_j), \quad \ \mathrm{for\ } j=1, \ldots, n.
\end{equation}
\end{prop}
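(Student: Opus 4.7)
The strategy is to pair both sides of \eqref{additive} with an arbitrary $\th(\bm_3)$. By the Frobenius property and the pairing-preserving property of $\th$ (Proposition \ref{pairing-preserving}), the identity \eqref{additive} is equivalent to showing
$$\lan \th(v_j), \th(\bm), \th(\bm_3)\ran_{0,3}^\PV \;=\; \wt\Res_{\bw^T}\!\bigl(x^{\bm+v_j}\cdot x^{\bm_3}\bigr)$$
for every standard vector $\bm_3\in\fS_{\bw^T}$. The right-hand side is nonzero only when $\bm_3 = \ov{\bm+v_j}$, with explicit value read off from \eqref{chain-residue}. The plan is to verify this correlator identity by case analysis, according to whether the three insertions $\th(v_j)$, $\th(\bm)$, $\th(\bm_3)$ are narrow or broad.

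First I would apply the selection rule of Lemma \ref{select} to the triple $\ogamma=(\fI(v_j),\fI(\bm),\fI(\bm_3))$. Combined with the Homogeneity Conjecture (Proposition \ref{homogeneity-prop}) ensuring the correlator lives in degree zero, this restricts the list of potentially contributing $\bm_3$'s to those compatible with $\bm_3=\ov{\bm+v_j}$. When all three insertions are narrow, the fundamental matrix factorization of~\cite{PV16} makes the correlator computable by a concrete recipe: from \eqref{deg-L-eq}, either all line bundles $L_j$ on the universal curve have negative degree (allowing the Concavity Axiom) or exactly one has degree zero (allowing the Index Zero Axiom), in either case producing an explicit number that matches the corresponding residue from \eqref{chain-residue}.

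Broad insertions appear only for chain polynomials with $\bm\in\fS^{k\ge 1}_{\bw^T_{\rm chain}}$ and for loops in an even number of variables, as enumerated in Lemma \ref{broad-classify}, together with the exceptional cases of Corollary \ref{broad-element} where $\th_j$ itself is broad. For the chain case the decisive tool is the chain-reduction Proposition \ref{chain-reduction-prop}: by recognising $\th(\bm)=\Ch(K_\bm)$ as $\phi_{K_0,\gb}$ applied to a simpler class, the proposition reduces the three-point correlator to a residue pairing on the smaller polynomial obtained by removing the coordinates fixed by $\fI(\bm)$, multiplied by a product $\prod_i(-a_{2i-1})$ that matches precisely the coefficient in the third line of \eqref{chain-residue}. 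The remaining even-loop and exceptional two-variable instances are handled analogously using Corollary \ref{P622-cor}, reproducing the entries of \eqref{loop-metric}.

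The principal obstacle is the broad--broad interaction, where both $\th(\bm)$ and $\th(\bm_3)$ come from Koszul matrix factorizations. Here the naive selection rule allows more candidate $\bm_3$'s than the expected $\ov{\bm+v_j}$, and one must verify that the extra contributions either vanish or collectively reproduce the expected residue after using the Jacobian relations in $\cQ_{\bw^T}$. Making this precise requires checking the surjectivity hypothesis $G_\bw\to G_{\bw_{\gb_2,\gb_3}}$ underlying Corollary \ref{P622-cor}, together with careful tracking of the $(-1)^{n\choose 2}$ sign in \eqref{mukai-pairing} and the $\zb$-twist in \eqref{PV-pairing}, so that the scalar produced by the matrix factorization computation matches the normalized residue \eqref{normalize-residue} on the nose.
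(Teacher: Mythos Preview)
Your proposal is correct and follows essentially the same approach as the paper: reduce to three-point correlators via the pairing, handle the all-narrow case by a degree computation showing $\deg L_i=-1$ for all $i$ (so only the Concavity Axiom is needed here, not Index Zero), and treat the broad chain and even-loop cases using Proposition~\ref{chain-reduction-prop} and Corollary~\ref{P622-cor} respectively. One small correction: the residue pairing is not diagonal in the even-loop broad sector (see \eqref{loop-metric} and \eqref{chain-residue}), so for $\bm+v_j\in\{\bm^{\rm odd},\bm^{\rm even}\}$ you must compute both correlators $\langle\th_j,\th(\bm),\th(\bm^{\rm odd})\rangle$ and $\langle\th_j,\th(\bm),\th(\bm^{\rm even})\rangle$ and then invert the $2\times 2$ pairing matrix, exactly as the paper does.
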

\begin{proof}
  We calculate the product  $\th_j\bullet\th(\bm)$ using the formula \eqref{multiplication-formula}
with   $\th(\bm_1)=\th_j$ and $\th(\bm_2)=\th(\bm)$.
According to the calculation of the pairing $\langle \ , \ \rangle$ via the formula
\eqref{chain-residue} and Proposition \ref{pairing-preserving}, it suffices to compute
both $\langle\th_j, \th(\bm), \bm^{\rm even}\rangle_{0,3}^\PV$ and $\langle\th_j, \th(\bm),
\bm^{\rm odd}\rangle_{0,3}^\PV$ when $\bm+v_j$ is broad,
when $\bw$ is of the loop type,
and $\langle\th_j,  \th(\bm), \th(\ov{\bm+v_j})\rangle_{0,3}^\PV$
in all other cases.
We discuss the details by considering the following three cases, some of which may contain
several subcases.

\

\noindent {\bf Case  1.  Correlators without broad insertions.}

\

Assume first that $v_j, \bm,$ and $\bm+v_j$ are all narrow (and both $\bm$ and $\bm+v_j$ are
standard vectors in $\fS_{\bw^T}$).
We want to calculate the correlator $\langle\theta_j,  \theta(\bm),
\theta(\overline{\bm+v_j})\rangle_{0,3}^{\rm \PV}$.
The formulas \eqref{deg-L-eq} and \eqref{group-element} show that the line bundles $L_i$,
$i=1,\ldots, n$,  satisfy
\begin{equation}
\label{3-point-degree-minus-one}
\deg L_i=-2q_i-\sum_{k=1}^{n} s(\bw^T)_k \rho_{k}^{(i)}=-1.
\end{equation}
Here $s(\bw^T)_k$ is the $k$-th
component of the socle vector $\bs(\bw^T)=(s(\bw^T)_1, \ldots, s(\bw^T)_n)$.

The last equality follows from Proposition~\ref{broad-relation}(i) and
equation~\eqref{variable-weight}.
The equation~\ref{3-point-degree-minus-one} shows that our three-point correlator is concave
and thus by the Concavity Axiom~\cite[Corollary 5.5.3]{PV16} we obtain that
$$\langle\theta_j,  \theta(\bm), \theta(\overline{\bm+v_j})\rangle_{0,3}^{\rm \PV}=1.$$
This equation, together with the formulas for  the multiplication~\eqref{multiplication-formula} and
the paring~\ref{chain-residue}, imply the formula~\eqref{additive} in this case.

\

\noindent {\bf Case 2. Chain correlators with broad insertions.}

\

Now we consider all cases when $\bw$ is a chain polynomial  and at least one of the vectors
 $v_j, \bm, \bm+v_j$ is broad.
Recall that we assume that both $\bm$ and $\bm+v_j$ are standard vectors in $\fS_{\bw^T}$.
Using the description of broad elements in Lemma~\ref{broad-classify}, we will show that
$\bm+v_j$ must be broad.

By Corollary \ref{broad-element}, if $v_j$ is broad, then  we must have $j=n$ and $a_n=2$.
Then Lemma~\ref{broad-classify} implies that the standard vector $\bm+v_j=\bm+v_n$ is broad.
If $v_j$ is narrow, then by Lemma~\ref{broad-classify}
and the description of $\fS_{\bw^T}$ in Table \ref{table-basis}, both $\bm$ and $\bm+v_j$ are broad.
This means that we need to consider the following three subcases:
\begin{enumerate}[(i)]
\item $j\leq n-2k$;
\item $j>n-2k$ and $v_j$ is narrow;
\item $j>n-2k$ and $v_j$ is broad.
\end{enumerate}

By Proposition \ref{pairing-preserving} and the paring calculation~\eqref{chain-residue}, it is
enough to prove that
\begin{equation}
\label{chain-obvious}
\langle\th_j, \th(\bm), \th(\ov{\bm+v_j})\rangle_{0,3}^\PV=\prod\limits_{i=0}^{k-1}(-a_{n-2i-1}).
\end{equation}

The key is to calculate $\deg L_i$ for each $1\leq i\leq n$.
By the definition of $\fI(\bm)$
\eqref{group-element}, we have $\fI(v_j)^{(i)}=q_i+\rho_j^{(i)}$. Using \eqref{chain-entry},  we have
\begin{equation}
\label{sign-q_i}
\begin{array}{ll}
\fI(v_j)^{(i)}>q_i, & \text{ if } j\geq i \text{ and } j-i \text{ is even};\\
\fI(v_j)^{(i)}<q_i, & \text{ if } j\geq i \text{ and } j-i \text{ is odd};\\
\fI(v_j)^{(i)}=q_i, & \text{ if } j<i.\\
\end{array}
\end{equation}

Now we discuss each subcase in detail.
We set $\gb_2=\fI(\bm)$, $\gb_3=\fI(\ov{\bm+v_j})$.

\ \\
     \noindent {\sf Case}
(i):
$j\leq n-2k\leq n-2$.

Then $m_j+1\leq a_j-1$ and $\bm\in\fS^{k}_{\bw^T_{\rm chain}}$ is also broad.
Both $\bm+v_j$ and $\bm$ fix variables $\{x_{n-2k+1}, \ldots, x_n\}$.
Thus if $i>n-2k$, we have
$$\fI(\bm)^{(i)}=\fI(\ov{\bm+v_j})^{(i)}=0.$$
By \eqref{sign-q_i} and the degree formula \eqref{deg-L-eq}, we have
$$\deg L_i=0, \quad \text{if}\quad i>n-2k.$$
If $i\leq n-2k$, the calculation in \eqref{3-point-degree-minus-one} is still valid.
In conclusion, we have
$$
\deg L_i=
\left\{
\begin{array}{ll}
-1, & i\leq n-2k;\\
0, & i> n-2k.
\end{array}
\right.
$$

According to the definitions \eqref{broad-indices} and \eqref{broad-line-indices}, we see that
$$
\Sigma_{n-2k+1}=\ldots=\Sigma_n=\{2, 3\}, \quad S_0=S_1=\emptyset, \quad S_2=\{n-2k+1,
\ldots, n\}.
$$
The assumptions of Proposition \ref{chain-reduction-prop}  are satisfied with $s=0$ and
$$
\bw_{\gb_2}=\bw_{\rm chain}|_{V^{\gb_2}} =\sum\limits_{i=n-2k+1}^{n-1}x_i^{a_i}x_{i+1}+x_n^{a_n}.
$$
Thus, by Proposition \ref{chain-reduction-prop}, Proposition \ref{pairing-preserving}, and formula
\eqref{chain-residue}, we get
\begin{eqnarray*}
\langle\th_j, \th(\bm), \th(\ov{\bm+v_j})\rangle_{0,3}^\PV
&=&\langle \Ch(K_\bm), \Ch(K_{\ov{\bm+v_j}})\rangle_{\bw_{\gb_2}}\\
&=&\prod\limits_{i=0}^{k-1}(-a_{n-2i-1}).
\end{eqnarray*}

\

\ \\
  \noindent {\sf Case}  (ii):  $j>n-2k$ and $v_j$ is narrow.

 \

In this case $n-j$ must be even and $\bm\in \fS^{(n-j)/2}$.
Therefore, $n-j\leq 2k-2$ and so we actually have $j-1>n-2k$.
Thus $\bm$ fixes variables $x_{j-1}, \ldots, x_n$ and $\bm+v_j$ fixes $x_{n-2k+1}, \ldots, x_n$.

Now to calculate $\deg L_i$ we consider the following four cases.
\begin{itemize}
\item
If $i\geq j-1$, then $\deg L_i=0$  since $\fI(\bm)^{(i)}=\fI(\ov{\bm+v_j})^{(i)}=0,$ and $\fI(v_j)^{(i)}=q_i$.
 \item If $n-2k<i<j-1$ and $j-i$ is even, then $\fI(\ov{\bm+v_j})^{(i)}=0$ and we have
  $\fI(v_j)^{(i)}>q_i$ by \eqref{sign-q_i}. By the degree formula \eqref{deg-L-eq}, $\deg L_i=-1.$
 \item  If $n-2k<i<j-1$ and $j-i$ is odd, then  $\fI(\ov{\bm+v_j})^{(i)}=0$ and we have
  $\fI(v_j)^{(i)}<q_i$ by \eqref{sign-q_i}. By the degree formula \eqref{deg-L-eq}, $\deg L_i=0.$
 \item If $i\leq n-2k$, then $\deg L_i=-1$ as in \eqref{3-point-degree-minus-one}.
 \end{itemize}
 In conclusion, we have
 $$
\deg L_i=
\left\{
\begin{array}{ll}
-1, & i\leq n-2k;\\
0, & n-2k<i<j-1 \text{ and } j-i \text{ is odd};\\
-1, & n-2k<i<j-1\text{ and } j-i \text{ is even};\\
0, & i\geq j-1.
\end{array}
\right.
$$

Again by the definitions \eqref{broad-indices} and \eqref{broad-line-indices}, we see that
$$
S_0=\emptyset, \quad S_1=\{n-2k+2, n-2k+4, \ldots, j-2\}, \quad S_2=\{j-1, j, \ldots, n\}.
$$
According to \eqref{chain-koszul}, the Koszul matrix factorization $K_\bm$ and $K_{\ov{\bm+v_j}}$ are given by
$$
K_\bm
=
\left(\bigotimes_{t=j/2}^{n/2-1}\big\{-(x_{2t-1}^{a_{2t-1}}+x_{2t}^{a_{2t}}x_{2t+1}), x_{2t}\big\}
\right)\ot\big\{(-(x_{n-1}^{a_{n-1}}+x_{n}^{a_n-1}), x_n)\big\}
$$
and
$$
K_{\ov{\bm+v_j}}=
\left(\bigotimes_{t=n/2-k+1}^{n/2-1}\big\{-(x_{2t-1}^{a_{2t-1}}+x_{2t}^{a_{2t}}x_{2t+1}), x_{2t}\big\}
\right)\ot\big\{(-(x_{n-1}^{a_{n-1}}+x_{n}^{a_n-1}), x_n)\big\}
$$

Thus, assumptions similar to the ones in
Proposition \ref{chain-reduction-prop} are satisfied with
$$
S_1=\{\ell \big\vert \ell \mathrm{\ is\ even\ },
n-2k<\ell<j-1\}, \ \ V^{\gb_2}=\A^{[j-1,n]}, \ \ V^{\gb_3}=\A^{[n-2k+1,n]}.
$$
Hence, applying Proposition \ref{chain-reduction-prop}, we reduce the calculation to that of the
residue pairing for $\bw_{\gb_2}$. That is,
\begin{eqnarray*}
\langle \th_j, \th(\bm), \th(\ov{\bm+v_j})\rangle_{0,3}^\PV
&=&\prod_{\ell\in S_1}(-a_{\ell-1})\langle \Ch(K_\bm), \Ch(K_\bm)\rangle_{\bw_{\gb_2}}\\
&=&\prod_{\ell\in S_1}(-a_{\ell-1})\prod_{i=0}^{n-j\over 2}(-a_{n-2i-1})\\
&=&\prod\limits_{i=0}^{k-1}(-a_{n-2i-1}).
\end{eqnarray*}
Here the first equality uses \eqref{mf-chain-reduction} and $\Ch(K_{\ov{\bm+v_j}})=\phi_{K_0,\gb_3}(\Ch(K_\bm))$;
the second equality uses \eqref{chain-pairing}.

\

\ \\
\noindent {\sf Case}  (iii):
$j>n-2k$ and  $v_j$ is broad.

In this case, by Corollary \ref{broad-element}, we have $j=n$ and $a_n=2$. We see that
$\fI(v_j)$ fixes variables $x_{n-1}$ and $x_n$ and $\fI(\bm+v_j)$ fixes $x_{n-2k+1}, \ldots, x_n$.
Therefore, we have
$$S_0=\emptyset, \quad S_1=\{\ell \big\vert \ell \mathrm{\ is \ even, \ }
n-2k<\ell<n-1\}, \quad S_{2}=\{n-1, n\}.$$
Similar to Case (ii) above, we obtain
$$\langle \th_j,  \th(\bm), \th(\ov{\bm+v_j})\rangle_{0,3}^\PV=(-a_{n-1})
\prod_{\ell\in S_1}(-a_{\ell-1})=\prod\limits_{i=0}^{k-1}(-a_{n-2i-1}).$$

\

\noindent  {\bf Case 3. Loop correlators with broad insertions.}

\

Finally we consider the cases when $\bw$ is a loop polynomial
and at least one of the elements in $\{v_j, \bm, \bm+v_j\}$ is broad.
Using Lemma \ref{broad-classify} and Corollary \eqref{broad-element}
we see that there are three subcases:
\begin{enumerate}[(i)]
\item $\bm+v_j$ is broad, that is,  $\bm+v_j=\bm^{\rm odd}$ or  $\bm+v_j = \bm^{\rm even}$.
\item $\bm$ is broad, that is,  $\bm=\bm^{\rm odd}$ or $\bm=\bm^{\rm even}$.
\item $v_j$ is broad. This can happen only when $n=2$ and $a_j=2$.
\end{enumerate}

\

\ \\
\noindent {\sf Case}  (i):
We first assume that $\bm+v_j=\bm^{\rm odd}$.

Then $\bm=\bm^{\rm odd}-v_j$ and $j$ should be odd.
We have to compute both $\langle\th_j, \th(\bm), \th(\bm^{\rm even})\rangle_{0,3}^\PV$
and $\langle\th_j, \th(\bm), \th(\bm^{\rm odd})\rangle_{0,3}^\PV.$
For both cases, the first two insertions are narrow and the last insertion is broad.
More explicitly, we have
$$V^{\gb_1}=V^{\gb_2}=0, \ \ V^{\gb_3}=\A^n,$$
$L_{i}=\cO$ if $i$ is even and $L_{i}=\cO(-1)$ if $i$ is odd. Thus,
$$S_1=\{1, 3, \ldots, n-1\}.$$
Hence, by Corollary \ref{P622-cor}, we have
$$f_2(\th(\bm^{\rm odd}-v_j))=\one_{\gb_2},$$
and the correlators are determined by
$f_3(\th(\bm^{\rm even}))$ or $f_3(\th(\bm^{\rm odd}))$ in each case.
Here the map $f_3:H(\bw_{\gb_3})\to H(\bw_{\gb_2})$ is induced by the restriction to
the subspace $x_1=x_3=\ldots=x_{n-1}=0$ followed by the push-forward with respect to the projection
$$p_0:\A^{\{2,4,\ldots, n\}}\times V^{\gb_2}\to V^{\gb_2}.$$
Using the Koszul matrix factorizations defined in~\eqref{loop-chern-sign} and the argument similar
to the one in Proposition \ref{chain-reduction-prop}, we get
\begin{align}
\label{odd-loop}
\langle\th_j, \th(\bm^{\rm odd}-v_j), \th(\bm^{\rm even})\rangle_{0,3}^\PV &=1, \nonumber\\
\langle\th_j, \th(\bm^{\rm odd}-v_j), \th(\bm^{\rm odd})\rangle_{0,3}^\PV  &=\prod_{2\mid k}(-a_{k}).
\end{align}
Using \eqref{multiplication-formula} and \eqref{loop-metric}, the above computation gives
$$\th_j\bullet\th(\bm^{\rm odd}-v_j)=\th(\bm^{\rm odd}).$$

Now assume that $\bm+v_j=\bm^{\rm even}$. Then $\bm=\bm^{\rm even}-v_j$ and $j$ should be even.
Similarly to the previous case we compute
\begin{align*}
 \langle\th_j, \th(\bm^{\rm even}-v_j), \th(\bm^{\rm even})\rangle_{0,3}^\PV&=\prod_{2\nmid k}(-a_{k}), \\
 \langle\th_j, \th(\bm^{\rm even}-v_j), \th(\bm^{\rm odd})\rangle_{0,3}^\PV&=1,
  \end{align*}
which implies that
 $$\th_j\bullet\th(\bm^{\rm even}-v_j)=\th(\bm^{\rm even}).$$

\ \\
    \noindent {\sf Case}  (ii):
If $\bm=\bm^{\rm odd}$, then because $\bm+v_j$ is still a standard vector,
we see that $j$ must be even and $\bm+v_j=\bm^{\rm odd}+v_j$ is narrow.
Then $\ov{\bm+v_j}=\bm^{\rm even}-v_j$.
The above calculation shows that
$$\langle\th_j,  \th(\bm^{\rm odd}), \th(\bm^{\rm even}-v_j)\rangle_{0,3}^\PV=1$$
which implies
$$
\th_j\bullet\th(\bm^{\rm odd})=\theta(\bm^{\rm odd}+v_j).$$

If $\bm=\bm^{\rm even}$ a similar argument gives
$$
\th_j\bullet\th(\bm^{\rm even})=\theta(\bm^{\rm even}+v_j).$$

\ \\
     \noindent {\sf Case}  (iii): Now assume that  $v_j$ is broad.
Without loss of generality, we only need to consider the case when $v_j=\bm^{\rm odd}$. Then $j=1$
and $\bw=x_1^2x_2+x_1x_2^{a_2}$. Using the assumption, $\bm=(0, m)$ for some $m\leq a_2-2$.
We consider the correlator $\langle\th_1, \th(x_2^m),
\th(x_2^{a_1-1-m})\rangle_{0,3}^\PV$. For this correlator,
$$
\Sigma_1=\Sigma_2=\{p_1\}, \quad L_{1}=\cO, \quad L_{2}=\cO(-1), \quad
S_0=S_2=\emptyset, \quad S_1=\{2\}.
$$
This implies
$$\langle\th_1, \th(x_2^m), \th(x_2^{a_1-1-m})\rangle_{0,3}^\PV=1.$$
Thus we obtain $\th_1\bullet\th(x_2^m)=\th(x_2^{m+1})$.

This finishes the proof of Proposition~\ref{additive-relation}.

\end{proof}

\subsubsection{Verifying Jacobian relations for $\cHw$}
 \begin{prop}
 \label{injectiveness}
For each invertible polynomial $\bw$,
the generators $\th_1,\ldots,\th_n$~\eqref{j-generator} of the algebra $\cH(\bw,G_\bw)$ satisfy the
Jacobian relations (i.e.\ generators of the Jacobian ideal~\ref{eq:jac_ideal}) for the Milnor ring
$\cQ_{\bw^T}$~\eqref{eq:milnor} of the mirror polynomial $\bw^T$. In other words, one has
\begin{equation}
\label{jacobi}
 {\partial \bw^T\over \partial x_j}(\th_1, \cdots, \th_n)=0, \quad
\text{\ for\ } j=1,\ldots,n.
\end{equation}
\end{prop}
\begin{proof}
According to the classification of invertible polynomials~\cite{KS92}, it is sufficient to
verify the Jacobian relations~\eqref{jacobi} only for atomic
polynomials~\eqref{atomic-type}. This leads to the following three groups of relations.

\begin{enumerate}[(i)]
\item  For a polynomial of the Fermat type, $\bw=x_1^{a_1}$, we have
\begin{equation}
\label{fermat-relation}
\th_1^{a_1-1}=\th_1\bullet \th_1^{a_1-2}=0.
\end{equation}
\item For
$\bw$ of the chain type \ $\bw=\sum\limits_{i=1}^{n-1}x_i^{a_i}x_{i+1}+x_n^{a_n}$, \ we have
\begin{equation}
\label{chain-last}
a_n\th_{n-1}\bullet \th_n^{a_n-1}=0.
\end{equation}
\item
 For $\bw$ of the loop type, \
 $\bw=\sum\limits_{i=1}^{n-1}x_i^{a_i}x_{i+1}+x_n^{a_n}x_1$, \
 or  of the chain type  with $j\neq n$, we have
\begin{equation}
\label{loop-relation}
a_j\th_{j-1}\bullet \th_{j}^{a_j-1}+\th_{j+1}^{a_{j+1}}=0.
\end{equation}
\end{enumerate}
Here by  $\th_i^k$ we denote the $k$-th power of the generator $\th_i$ with respect to the
multiplication in $\cH(\bw,G_\bw)$.
In the chain case, we use the convention $\th_0:=\one_{J_\bw}$ when $j=1$.
For the loop case, we use the convention $\th_0:=\th_{n}$, $\th_{n+1}:=\th_1$, and $a_{n+1}:=a_1$.

For the relations (i), (ii), and the first case of (iii), we repeat the proof from~\cite{Kr10}.
In the remaining cases, which involve broad insertions, we will have to use the special properties
of the MF CohFT.

\ \\
     \noindent {\sf Case}  (i):
We have $\th_1\bullet \th_1^{a_1-2}=0$, since for $m=0,1,\ldots a_1-2$,
the Selection Rule~\eqref{select} implies
$$\langle\th(1),\th(a_1-2), \th(m)\rangle_{0,3}^\PV = 0.$$

\ \\
     \noindent {\sf Case}  (ii):
 This is the same as~\cite[Lemma 4.6]{Kr10}. To obtain \eqref{chain-last}, it is enough to show that
  $$\langle \th_{n-1},  \th_n^{a_n-1}, \th(\bm)\rangle_{0,3}^\PV=0, \mathrm{\ \ for \ \ }
     \th(\bm)\in \cHw.$$
If it is not true, then using the Selection Rule~\eqref{select} and equations~\eqref{chain-entry}
and \eqref{chain-weight}, we get
$$
\th_{n-1}^{(n)}={1\over a_n}=q_n, \quad \th_{n-1}^{(n-1)}={1\over a_{n-1}}+q_{n-1}\neq q_{n-1},
\quad (\th_n^{a_n-1})^{(n)}=(\th_n^{a_n-1})^{(n-1)}=0.
$$
This implies that
$\fI(\bm)$ fixes the variable $x_n$ but not the variable $x_{n-1}$, which contradicts Lemma~\ref{chain-broad}.

\

\ \\
  \noindent {\sf Case}  (iii):
 According to Corollary  \ref{broad-element}, there are three possible subcases:
\begin{enumerate}
\item   The  monomials
  $\th_{j-1}, \ \th_j^{a_j-1},$ and $\th_{j+1}^k$ for $1\leq k\leq a_{j+1}-1$ are all narrow.
\item $\bw$ is a
  loop polynomial $\bw=x_1^{a_1}x_2+x_1x_2^{a_2}$.
In this case   $\th_i^{a_i-1}$ is broad.
\item $\bw$ is a chain polynomial $\bw=\sum\limits_{i=1}^{n-1}x_i^{a_i}x_{i+1}+x_n^{a_n}$.
In this case $\th_n^{a_n-1}$ is broad.
\end{enumerate}

\ \\
     \noindent {\sf Case} (iii.1):
In this case, using equation~\eqref{additive}, we have
$\th_{j-1}\bullet \th_{j}^{a_j-1}=\th(\bm)$ for a standard vector $\bm$ such that
$\fI(\bm)=J_\bw\brho_{j-1}\brho_j^{a_j-1}$ and
$$m_{i}=q_i+\rho_{j-1}^{(i)}+(a_j-1)\rho_j^{(i)}=q_i-\rho_j^{(i)}+\de_j^{i}.$$
Since none of $\th_{j+1}^k$ for $1\leq k\leq a_{j+1}-1$ is broad,
we see, using Proposition~\ref{broad-relation}.(iii) and Corollary  \ref{broad-element},
that the vector $\bm$ must be narrow. The same is true for
its complementary vector $\ov{\bm}$.

Using \eqref{group-element}, we can
now find $\langle\th_{j+1},\th_{j+1}^{a_{j+1}-1}, \th(\ov{\bm})\rangle_{0,3}^\PV$. We have
\begin{eqnarray*}
\deg L_i&=&q_i-(q_i+\rho_{j+1}^{(i)})-(q_i+(a_{j+1}-1)\rho_{j+1}^{(i)})-(1-q_i-\rho_{j-1}^{(i)}-(a_j-1)\rho_j^{(i)})\\
&=&-1+(\rho_{j-1}^{(i)}+a_j\rho_j^{(i)})-(\rho_{j}^{(i)}+a_{j+1}\rho_{j+1}^{(i)})\\
&=&
\left\{\begin{array}{ll}
0, & j=i;\\
-2, & j=i-1;\\
-1, & \text{ otherwise}.
\end{array}
\right.
\end{eqnarray*}
Here the last equality follows from \eqref{rho-constraint}.
Then by Index Zero Axiom~\cite[Proposition 5.7.1]{PV16},
we obtain
$$\langle\th_{j+1},\th_{j+1}^{a_{j+1}-1},\th(\ov{\bm})\rangle_{0,3}^\PV=-a_j.$$
Now the relation \eqref{loop-relation} follows from
$$\th_{j+1}^{a_{j+1}}=\th_{j+1}\bullet\th_{j+1}^{a_{j+1}-1}
=
\langle\th_{j+1},\th_{j+1}^{a_{j+1}-1},\th(\ov{\bm})\rangle_{0,3}^\PV\ \th(\bm)
=-a_j\th_{j-1}\bullet\th_{j}^{a_j-1}.$$

\ \\
     \noindent {\sf Case} (iii.2):
Now consider the loop polynomial $\bw=x_1^{a_1}x_2+x_1x_2^{a_2}$.
 According to Corollary \ref{broad-element}, the element $\th_i^{a_i-1}=\th(x_i^{a_i-1})$ is broad.
Moreover, $\th_1^{a_1-1}=\Ch(K_{\rm odd})$ and $\th_2^{a_2-1}=\Ch(K_{\rm even})$, where
the Chern characters are given by \eqref{chern-odd} and \eqref{chern-even}.
Take $j=1$ and $\bm=\bm^{\rm even}=(0, a_2-1)$ in \eqref{additive}, we obtain
$$\theta_1\bullet\theta_2^{a_2-1}=\theta\left(x_1x_2^{a_2-1}\right).$$
According to the equation~\eqref{odd-loop}, we have
$$
\langle \th_1, \th_1^{a_1-1}, \th_1^{a_1-2}\rangle_{0,3}^\PV=
\langle \th_1, \th_1^{a_1-2}, \th_1^{a_1-1}\rangle_{0,3}^\PV=-a_2.
$$
Using \eqref{dual-vector}, we have $\ov{(a_1-2, 0)}=(1, a_2-1)$.
Using  \eqref{chain-residue}, we obtain
$$
\th_1\bullet\th_1^{a_1-1}=\langle \th_1, \th_1^{a_1-1}, \th_1^{a_1-2}\rangle_{0,3}^\PV\
\theta\left(x_1x_2^{a_2-1}\right)=-a_2\th_1\bullet\th_2^{a_2-1}.
$$
The other relation $\th_2\bullet\th_2^{a_2-1}=-a_1\th_2\bullet\th_1^{a_1-1}$ in
\eqref{loop-relation} can be obtained similarly.

\ \\
 \noindent {\sf Case} (iii.3): Recall from Table~\ref{table-basis} that the socle element
 of $\bw^T$ is equal to $x_n^{a_n-2}\prod\limits_{i=1}^{n-1}x_i^{a_i-1}$.
To obtain the relation $\th_n\bullet\th_n^{a_n-1}=-a_{n-1}\th_{n-2}\bullet\th_{n-1}^{a_{n-1}-1}$, it
is sufficient to prove the following formula for the three-point correlator
\begin{equation}
\label{jacobi-chain-broad}
\langle \th_n, \th_n^{a_2-1}, \th_n^{a_n-2}\th_{n-2}^{a_{n-2}-2}
\prod_{i=1}^{n-3}\th_{i}^{a_{i}-1}\rangle_{0,3}^\PV=-a_{n-1}.
\end{equation}
We break the discussion into three cases:
\begin{enumerate}[(a)]
\item $n=2$ and $a_2\geq 3$.
\item $n=2$ and $a_2=2$.
\item $n\geq3$ and $a_n=2$.
\end{enumerate}

\ \\
   \noindent {\sf Case} (iii.3a):
We start with $n=2$ and $a_2\geq 3$, i.e., with the chain polynomial
$\bw=x_1^{a_1}x_{2}+x_2^{a_2}.$
We notice that $\th_2^{a_2-1}$ is broad with both variable fixed.
For the correlator
$\langle\th_2,\th_2^{a_2-1}, \th_2^{a_2-2}\rangle_{0,3}^\PV$, we have
 $$\deg L_1=0,\quad \deg L_2=-1, \quad \Sigma_1=\Sigma_2=\{2\}, \quad S_0=S_2=\emptyset, \quad S_1=\{2\}.$$
Now we can apply Proposition \ref{chain-reduction-prop} to obtain \eqref{jacobi-chain-broad}, that is
$$\langle\th_2,\th_2^{a_2-1}, \th_2^{a_2-2}\rangle_{0,3}^\PV=-a_1.$$

\ \\
     \noindent {\sf Case} (iii.3b):
Now we consider the chain polynomial
$\bw=x_1^{a_1}x_{2}+x_2^2$.
Here $a_2=2$ and a direct calculation shows that
\begin{equation}
\label{2-chain-milnor}
\mu_{\bw}=2a_1-1
\end{equation}
and
\begin{equation}
\label{2-chain-hessian}
\mathrm{Hess}(\bw)=-a_1(2a_1-1)x_1^{2a_1-2}\in \cQ_{\bw}.
\end{equation}
The correlator in \eqref{jacobi-chain-broad} is
$\langle\th_2,\th_2^{a_2-1}, \one_J\rangle_{0,3}^\PV$.
Here $x_{2}$ is a broad variable because
$$\th_2^{(1)}=\th_2^{(2)}=0.$$
By \eqref{chern-chain}, we have
$$\th_2=a_{1}x_{1}^{a_{1}-1}dx_1\wedge dx_2.$$
We obtain
\begin{eqnarray*}\langle \th_2,  \th_2,  \one_J\rangle_{0,3}^\PV
&=&\langle -a_{1}x_{1}^{a_{1}-1}dx_1\wedge dx_2, a_{1}x_{1}^{a_{1}-1}dx_1\wedge dx_2\rangle_{\bw}\\
&=&\Res_\bw\left((a_1x_1^{a_1-1})^2 dx_1\wedge dx_2\right)\\
&=&{a_1\over 1-2a_1}\mu_{\bw}\\
&=&-a_1.
\end{eqnarray*}
Here the first identity is the metric axiom \eqref{metric-axiom}; the second identity
follows from the formula for the paring~\eqref{mukai-pairing};
the third identity uses the calculation \eqref{2-chain-hessian} and the residue formula
\eqref{hessian-residue}; the last identity follows from the equation~\eqref{2-chain-milnor}.
This proves the equation~\eqref{jacobi-chain-broad}.

\ \\
     \noindent {\sf Case} (iii.3c):
Finally, we compute
$\langle \th_n,  \th_n, \th_{n-2}^{a_{n-2}-2}\prod_{i=1}^{n-3} \th_{i}^{a_{i}-1}\rangle_{0,3}^\PV$
when $n\geq3$ and $a_n=2$.
Using $\th_n^{(i)}=q_i+\rho_n^{(i)}$, \eqref{chain-entry} and \eqref{chain-weight},
we get $\th_n^{(n)}=1$, $\th_n^{(0)}=0$, for all $i\leq n-2$, $q_i< 2\th_n^{(i)}< 1+q_i.$
Thus we obtain
$L_{n}=0, L_{n-1}=0, \text{ and } L_i= -1 \text{ if } i\leq n-2.$
We have
$$
\Sigma_{n-1}=\Sigma_n=\{1, 2\}, \quad S_0=S_1=\emptyset, \quad S_2=\{n-1,n\}, \quad
V^{\gb_2}=V^{\gb_3}=\A^{[n-1,n]}.
$$
Now again, we apply Proposition \ref{chain-reduction-prop} to obtain \eqref{jacobi-chain-broad}.
This completes the proof of Proposition~\ref{injectiveness}
\end{proof}
The following immediate implication of relations~\eqref{chain-last}
and~\eqref{loop-relation} will be used later.
\begin{crl}
\label{vanishing}
For a polynomial  of the chain type, $\bw=\sum\limits_{i=1}^{n-1}x_i^{a_i}x_{i+1}+x_n^{a_n}$, we have
\begin{equation}
  \label{eq:vanish}
\th_{i-1}\bullet \th_i^{a_i}=0, \  \text{\ for \ } i=2,\ldots,n.
\end{equation}
\end{crl}

\subsubsection{Mirror symmetry between Frobenius algebras}
Now we are ready to prove mirror symmetry at the Frobenius algebra level.
\begin{thm}
\label{ring-iso}
Let $\bw$ be an invertible polynomial. The mirror map $\th$ defined in Definition
\ref{mirror-map} is an isomorphism of  Frobenius algebras
$$
\th: \Big(\cQ_{\bw^T}, \wt\Res_{\bw^T}(,), \cdot\Big) \longrightarrow
  \Big(\cHw, \langle , \rangle, \bullet \Big).
$$
\end{thm}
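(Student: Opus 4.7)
The plan is to reduce the claim to atomic invertible polynomials via Thom--Sebastiani, then assemble the atomic case from the three preceding propositions of this section. For an atomic $\bw$, I would first invoke Proposition \ref{injectiveness}: since the Jacobian relations $\partial_j\bw^T(\th_1,\ldots,\th_n)=0$ are satisfied in $\cHw$, the assignment $x_j\mapsto\th_j$ descends from $\C[x_1,\ldots,x_n]$ through the ideal $\cJ_{\bw^T}$ to a well-defined ring homomorphism $\wt\th:\cQ_{\bw^T}\to\cHw$.

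Next I would verify that $\wt\th$ agrees with the mirror map $\th$ of Definition \ref{mirror-map} on every standard basis element. The argument is an induction on $|\bm|=m_1+\cdots+m_n$ built from Proposition \ref{additive-relation}: for each $\bm\in\fS_{\bw^T}$, one exhibits a path $0=\bm^{(0)},\bm^{(1)},\ldots,\bm^{(r)}=\bm$ inside $\fS_{\bw^T}$ with $\bm^{(k+1)}=\bm^{(k)}+v_{j_k}$, so that iterated application of $\th_{j_k}\bullet\th(\bm^{(k)})=\th(\bm^{(k+1)})$ yields $\wt\th(x^{\bm})=\th_1^{m_1}\bullet\cdots\bullet\th_n^{m_n}=\th(\bm)$. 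In the Fermat and loop types the path can be taken inside the ``box'' $\fS_{\bw^T}$; in the chain type one routes through $\fS^0_{\bw^T_{\rm chain}}$ and then climbs into each $\fS^{k\ge 1}$ at the moment $m_{n-2(k-1)}$ saturates to $a_{n-2(k-1)}-1$. Once $\wt\th=\th$, one concludes bijectivity: Proposition \ref{twisted-rank} gives $\dim\cHw=\mu_{\bw^T}=|\fS_{\bw^T}|$, and Proposition \ref{pairing-preserving} identifies the Gram matrix of $\{\th(\bm)\}$ under $\lan\cdot,\cdot\ran$ with that of $\{x^\bm\}$ under $\wt\Res_{\bw^T}$, which is nondegenerate. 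Hence $\{\th(\bm)\}$ is a basis, $\th$ is a linear isomorphism preserving the pairing, and combined with the ring-homomorphism property it is a Frobenius algebra isomorphism. For a general invertible polynomial $\bw=\bw_1\oplus\cdots\oplus\bw_\ell$ I would observe that both $\cQ_{\bw^T}=\bigotimes_i\cQ_{\bw_i^T}$ and $\cHw=\bigotimes_i\cH(\bw_i,G_{\bw_i})$ split as tensor products (the latter via the corresponding splitting of $\MF_{\Ga_\bw}(\bw)$), the residue and canonical pairings are multiplicative, and $\th=\bigotimes_i\th_i$, so the result for $\bw$ follows from the atomic case.

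The step I expect to require the most care is matching $\wt\th$ with $\th$ on the broad sectors of chain polynomials. The Chern-character formulas of Definition \ref{mirror-map} that define $\th(\bm)$ for $\bm\in\fS^{k\ge 1}_{\bw^T_{\rm chain}}$ look rather different from iterated products of generators, and although Proposition \ref{additive-relation} packages the content of every single multiplication step, one must exhibit a combinatorial path through $\fS_{\bw^T}$ that realizes the broad element as such a product. The delicate transitions are precisely the jumps $\fS^{k-1}\leadsto\fS^k$; verifying that the intermediate vectors stay in $\fS_{\bw^T}$ is where the broad ``Chern-character'' definition of $\th$ genuinely interlocks with the multiplicative structure forced by the Jacobian relations, and so is the locus of the main bookkeeping.
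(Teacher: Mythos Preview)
Your proposal is correct and follows essentially the same route as the paper: reduce to the atomic case via the Thom--Sebastiani tensor decomposition (the paper cites \cite[Theorem 5.8.1]{PV16} for the $A$-side splitting), then combine Propositions \ref{pairing-preserving}, \ref{additive-relation}, and \ref{injectiveness}. The paper's own proof is a single sentence invoking these three results without spelling out the glue; your write-up makes explicit the mechanism they leave implicit---namely, that \ref{injectiveness} gives a ring map $\wt\th$, \ref{additive-relation} plus a path through $\fS_{\bw^T}$ identifies $\wt\th$ with $\th$ on the standard basis, and \ref{pairing-preserving} (together with the dimension count of Proposition \ref{twisted-rank}) upgrades this to a Frobenius algebra isomorphism. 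Your concern about the chain-type transitions $\fS^{k-1}\leadsto\fS^k$ is well-placed but not an obstruction: the path you describe (build up $m_1,\ldots,m_{n-2k}$ inside $\fS^0$, then successively saturate $m_n,m_{n-2},\ldots$ to $a_{n-2i}-1$, each saturation step jumping from $\fS^i$ to $\fS^{i+1}$) stays inside $\fS_{\bw^T}$ at every step, so Proposition \ref{additive-relation} applies throughout.
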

\begin{proof}
Since the Frobenius algebras on both sides are isomorphic to tensor products
of the algebras of the corresponding atomic components (for the $A$-side this follows
from~\cite[Theorem 5.8.1]{PV16}), the statement reduces to the case when $\bw$ is atomic.

When $\bw$ is atomic, Proposition~\ref{additive-relation}
shows that the elements $\th_1,\th_2,\ldots,\th_n$ generate the algebra $\cHw$;
Proposition~\ref{injectiveness} gives that the mirror map $\th$
is an algebra homomorphism; and Proposition~\ref{pairing-preserving}
establishes that the pairings agree under the isomorphism $\th$.
\end{proof}

\section{Mirror Frobenius manifolds}
\label{sec-frob-mfd}
In this section, we prove Theorem~\ref{main-theorem}
establishing mirror symmetry in the genus-zero case
(i.e.\ isomorphism of Frobenius manifolds).

By the definition of the prepotential~\eqref{eq:prepot},
once we fix an isomorphism between the spaces $\cQ_{\bw^T}$ and $\cHw$,
all we need is to identify the corresponding correlators.
After choosing the standard basis of $\cQ_{\bw^T}$ and the isomorphism $\th$ from Theorem
\ref{ring-iso}, we will do this in three steps. First we prove a nonvanishing result
Proposition~\ref{nonvanishing}. Then, we use it and the WDVV equations to show, in
Proposition~\ref{reconstruction}, that all genus-zero correlators can be reconstructed from
the Frobenius algebra structure constants (three-point correlators) and several genus-zero
four-point correlators.  Finally, in Section~\ref{reconstruction},
we compute these correlators and match them with the corresponding correlators for the
Saito Frobenius manifold.

\subsection{Nonvanishing}
Recall that the ring $\cHw$ is generated by $\th_1, \ldots, \th_n$, so any genus-zero
$k$-point \PVs correlator can be written
as a linear combination of correlators
\begin{equation}
\label{genus0-pv}
\left\langle \prod_{i=1}^n \th_i^{e_{1,i}}, \ldots, \prod_{i=1}^n \th_i^{e_{k,i}}
\right\rangle^\PV_{0,k} \quad
\mathrm{with} \quad
e_{j,i}\in \mathbb{Z}_{\geq 0}.
\end{equation}
Sometimes we omit the superscript and subscript when the notation does not cause confusion.
From Theorem~\ref{ring-iso}, we see that
each nonzero element $\prod_{i=1}^n \th_i^{e_{j,i}}$ must belong to $\cH_{\gb_j}$ for some $\ga_j\in G_\bw$.
We label the decorations of this correlator by $\ogamma=(\ga_1,\ldots,\ga_k)$ and denote
by $\cS_0(\ogamma)$ the corresponding component of the moduli space of $\Ga_{\bw}$-spin structures.
Let ${\rm st}: \cS_{0}(\ogamma)\to \oM_{0,k}$ be the morphism forgetting the $\Ga_{\bw}$-spin structure.
For each nonzero element $v_i\in \cH_{\gb_i}$, using notation from
\eqref{degree-shifting-number} and \eqref{broad-dimension}, we define
\begin{equation}
\label{insertion-degree}
\deg v_i:={n_{\gb_i}\over 2}+\iota_{\gb_i}.
\end{equation}

We now give conditions for nonvanishing of the correlator \eqref{genus0-pv}.
\begin{prop}[Nonvanishing]
\label{nonvanishing}
If the  genus-zero $k$-point correlator \eqref{genus0-pv} is nonzero, then
\begin{equation}\label{integer-deg}
-2q_j - \sum_{\nu=1}^k \sum_{i=1}^n\rho_i^{(j)}e_{\nu,i}  \in \mathbb{Z},  \quad \text{for}\;\;j=1, \ldots, n,
\end{equation}
and
\begin{equation}
\label{homogeneity-eq}
\sum_{\nu=1}^k \deg\Bigl( \prod_{i=1}^{n} \th_i^{e_{\nu,i}}\Bigr)=\widehat{c}_{\bw}+k-3.
\end{equation}
\end{prop}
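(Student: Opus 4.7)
The plan is to derive the two conditions separately: \eqref{integer-deg} from the Selection Rule (Lemma \ref{select}), and \eqref{homogeneity-eq} from Homogeneity (Proposition \ref{homogeneity-prop}). The bridge in both cases is the identification of the sector in which the product $\prod_i \th_i^{e_{\nu,i}}$ lives, namely
$$\gb_\nu := J_\bw \prod_{i=1}^n \rho_i^{e_{\nu,i}} \in G_\bw,$$
which follows from the fact that the Frobenius product on $\cHw$ respects the $G_\bw$-grading (this compatibility is visible from the construction of the mirror map and from Proposition \ref{additive-relation}).

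For \eqref{integer-deg}, I would use \eqref{rhoj} to expand the phase coordinates of $\gb_\nu$ as
$$\theta^{(j)}_{\gb_\nu} \equiv q_j + \sum_{i=1}^n e_{\nu,i}\,\rho_i^{(j)} \pmod{\Z}.$$
Non-vanishing of the correlator forces $\cS_{0,k}(\ogamma)$ to be non-empty, and so Lemma \ref{select} (with $g=0$ and $r=k$) yields $(k-2)q_j-\sum_\nu \theta^{(j)}_{\gb_\nu}\in \Z$ for every $j$. Summing the displayed relation over $\nu$ and substituting cancels the $k q_j$ contribution and produces \eqref{integer-deg}.

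For \eqref{homogeneity-eq}, the primary correlator is the integral of $\Lambda^\PV_{0,k}(\ogamma)$ over $\oM_{0,k}$, so non-vanishing requires a component of cohomological degree $2(k-3)=2\dim_\C\oM_{0,k}$. By Proposition \ref{homogeneity-prop} the image of $\phi_0(\ogamma)$ is concentrated in $H^{2\wt D_0(\ogamma)}(\cS_{0,k}(\ogamma),\C)$, and the pushforward by ${\rm st}$ preserves cohomological degree (the forgetful morphism has relative dimension zero onto its image). Hence non-vanishing forces $\wt D_0(\ogamma)=k-3$. Specializing \eqref{twist-dim} to $g=0$ and rearranging gives
$$\sum_{\nu=1}^{k}\Bigl(\iota_{\gb_\nu}+\tfrac{1}{2}n_{\gb_\nu}\Bigr)=\hat{c}_\bw+k-3,$$
so it remains to identify each local contribution $\iota_{\gb_\nu}+\tfrac{1}{2}n_{\gb_\nu}$ with $\deg\prod_i \th_i^{e_{\nu,i}}=\sum_i e_{\nu,i}\,q_i^T$, the weighted degree with respect to $\bw^T$.

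The main obstacle is this last identification, which is essentially the claim that the natural grading on $\cHw$ is compatible (under the mirror map of Theorem \ref{ring-iso}) with the weighted grading on $\cQ_{\bw^T}$. For narrow $\gb_\nu$ one has $n_{\gb_\nu}=0$ and the identity reduces to a short computation using \eqref{rho-constriant} together with the matrix identity $E_{\bw^T}=E_\bw^T$ (which expresses $q_i^T$ in terms of the $\rho_j^{(i)}$ of $\bw$). For the isolated broad cases that arise in loop and chain types (classified by Lemma \ref{chain-broad} and Corollary \ref{broad-element}), one must account for the additional $\tfrac{1}{2}n_{\gb_\nu}$ shift, which is done by matching the explicit Chern-character generators \eqref{chern-odd}--\eqref{chern-chain} against the complementary monomial $x^{\ov{\bm}}\in\cQ_{\bw^T}$ and using the weight formulas \eqref{chain-weight} and \eqref{loop-weight}. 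Once this identification is established, combining it with the equality $\wt D_0(\ogamma)=k-3$ gives \eqref{homogeneity-eq}.
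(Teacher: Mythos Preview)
Your approach is correct and follows the same overall structure as the paper's proof: \eqref{integer-deg} via the Selection Rule (Lemma~\ref{select}) and \eqref{homogeneity-eq} via Homogeneity (Proposition~\ref{homogeneity-prop}). Your derivation of \eqref{integer-deg} is essentially identical to the paper's.

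For \eqref{homogeneity-eq} you do more work than the paper. You interpret $\deg\prod_i\th_i^{e_{\nu,i}}$ as the weighted degree $\sum_i e_{\nu,i}\,q_i^T$ in $\cQ_{\bw^T}$ and then set out to prove that this coincides with $\iota_{\gb_\nu}+\tfrac12 n_{\gb_\nu}$, treating this as the ``main obstacle''. The paper sidesteps this entirely: within the proof it simply \emph{defines}
\[
\deg v := \tfrac{1}{2}n_{\gb}+\iota_{\gb}\qquad\text{for } v\in\cH_{\gb}
\]
(this is equation~\eqref{insertion-degree}), so that the identity $2\wt D_0(\ogamma)=-2\hat c_\bw+2\sum_\nu\deg\prod_i\th_i^{e_{\nu,i}}$ is immediate from \eqref{twist-dim}, and \eqref{homogeneity-eq} follows directly from $\wt D_0(\ogamma)=k-3$. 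The grading compatibility you are proving is true and is precisely what the paper records in the sentence immediately after the proof (identifying these nonvanishing conditions with those of \cite[Lemma~5.3]{HLSW} via $\hat c_\bw=\hat c_{\bw^T}$), but it is packaged there as a remark rather than as part of the argument. What your longer route buys is that the connection to the $B$-side grading is made explicit inside the proof; what the paper's route buys is that no case analysis on narrow versus broad sectors is needed at all.
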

\begin{proof}
If the correlator~\eqref{genus0-pv} is nonzero, then the moduli space $\cS_0(\ogamma)$ is non-empty.
According to Selection rule~\eqref{select}, the degrees of
$L_j=\rho_*\cL_j$, $i=1,\ldots,n,$
must be integers. Using \eqref{deg-L-eq}, we obtain the equation \eqref{integer-deg}.

According to Proposition \ref{homogeneity-prop}, the Homogeneity Conjecture holds for
MF CohFT associated with $(\bw, G_{\bw})$. This implies that for the correlator \eqref{genus0-pv},
the image of the map $\phi_0(\ogamma)$~\eqref{PV-class} in $H^*(\cS_0(\ogamma))$ has
a pure cohomological degree $2\wt{D}_{0}(\ogamma)$.
According to \eqref{twist-dim} and \eqref{insertion-degree}, we have
$$
2\wt{D}_{0}(\ogamma)=-2\widehat{c}_{\bw}+\sum_{i=1}^{k}(n_{\gb_i}+2\iota_{\gb_i})=
-2\widehat{c}_{\bw}+\sum_{\nu=1}^k 2\deg\prod_{i=1}^{n} \th_i^{e_{\nu,i}}.
$$
Since the space $\cS_0(\ogamma)$ has real dimension $2k-6$,
if the correlator is nonzero, we must have $2\wt{D}_{0}(\ogamma)=2k-6$.
Now this is equivalent to \eqref{homogeneity-eq}.
\end{proof}
\begin{rem}
The fact that $\widehat{c}_\bw=\widehat{c}_{\bw^T}$ implies that under the mirror
map~\eqref{mirror-map} the nonvanishing conditions here are the same as the nonvanishing conditions in~\cite[Lemma 4.1]{HLSW}.
\end{rem}

\subsection{Reconstruction}
The nonvanishing condition from Proposition~\ref{nonvanishing} and the WDVV
(associtiativity) equations allow us to reconstruct all genus-zero primary \PVs
correlators from the Frobenius algebra structure on $\cHw$ given in Theorem \ref{ring-iso}
and a few genus-zero four-point correlators.
The proof of this is done along the same steps as the proof of reconstruction in~\cite[Sections 5 and 7]{HLSW}, replacing FJRW
invariants with \PVs invariants.
We will not reproduce all details of the reconstruction process from~\cite{HLSW} here.
Instead, for reader's convenience, we will give several examples demonstrating the use
of the nonvanishing conditions and the WDVV equations.
 In the Appendix, we provide a slightly simpler proof of the reconstruction for polynomials
 of the chain type, which also deals with several cases not considered in~\cite{HLSW}.

The following lemma, which is a consequence of the WDVV equations, is proved exactly as
its counterpart~\cite[Lemma 6.2.6]{FJR} in the FJRW theory.
\begin{lem}[WDVV reduction]\label{WDVV}
Genus-zero $k$-point \PVs correlators satisfy
\begin{align}
  \langle \xi_1, \ldots, \xi_{k-3}, \ga, \de, \epsilon \bullet \phi \rangle
  = &\langle \xi_1, \ldots, \xi_{k-3}, \ga,  \epsilon, \de \bullet \phi \rangle\nonumber
+ \langle \xi_1, \ldots,\xi_{k-3}, \ga \bullet \epsilon, \de, \phi \rangle \nonumber \\
-&\langle \xi_1,\ldots,\xi_{k-3}, \ga \bullet \de, \epsilon, \phi \rangle
   +S \label{reconst_eq},
\end{align}
where $S$ is a linear combination of correlators with fewer than $k$ insertions.
If $k=4$, then
$S=0$, i.e.\ there are no such terms in the equation.
\end{lem}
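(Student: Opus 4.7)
\medskip

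My plan is to derive this identity from the standard WDVV equation on $\overline{\cM}_{0,k}$ together with the splitting axiom of the MF CohFT. The statement only uses the general CohFT structure (not anything specific to matrix factorizations), so the argument parallels \cite[Lemma 6.2.6]{FJR} verbatim once one knows that $\Lambda^\PV$ is a CohFT with flat identity. I note that the product $\epsilon\bullet\phi$ in the last insertion is really defined via the three-point function \eqref{eq:froben}, so the trick is to rewrite $\langle\ldots,\ga,\de,\epsilon\bullet\phi\rangle$ as a sum over intermediate states of products of a $(k)$-point and a $3$-point correlator. Equivalently, passing to $(k+1)$-point correlators with insertions $\xi_1,\ldots,\xi_{k-3},\ga,\de,\epsilon,\phi$ and restricting to an appropriate boundary stratum of $\oM_{0,k+1}$ recovers $\langle\xi_1,\ldots,\xi_{k-3},\ga,\de,\epsilon\bullet\phi\rangle$ by the splitting/metric axiom.

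The key input is the basic identity of boundary classes on $\oM_{0,4}$: the three boundary points, labelled by the partitions $\{\ga,\de\}\sqcup\{\epsilon,\phi\}$, $\{\ga,\epsilon\}\sqcup\{\de,\phi\}$, $\{\ga,\phi\}\sqcup\{\de,\epsilon\}$, all represent the same class. Pulling this relation back under the forgetful morphism $\oM_{0,k+1}\to\oM_{0,4}$ (stabilizing the four distinguished points $\ga,\de,\epsilon,\phi$) yields an equality of divisor classes on $\oM_{0,k+1}$ between the three sums of boundary divisors that separate $\ga,\de$ from $\epsilon,\phi$, etc. I would then cap both sides with $\Lambda^\PV_{0,k+1}(\xi_1,\ldots,\xi_{k-3},\ga,\de,\epsilon,\phi)$ and apply the CohFT splitting axiom along each boundary divisor to convert each term into a sum of products of smaller correlators.

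On the divisor where nothing but $\epsilon,\phi$ bubbles off on one side, the splitting/metric axiom collapses the bubble via $\langle\mu,\epsilon,\phi\rangle_{0,3}$ and identifies the resulting sum over intermediate states with $\langle\xi_1,\ldots,\xi_{k-3},\ga,\de,\epsilon\bullet\phi\rangle$; the analogous boundary divisors from the other two partitions yield the three terms on the right-hand side of \eqref{reconst_eq}. All remaining boundary components of $\oM_{0,k+1}$ (where some $\xi_i$'s also bubble off together with a subset of the distinguished points) contribute terms that are products of correlators each with strictly fewer than $k$ insertions, and these are collected into $S$. When $k=4$, there are no $\xi_i$'s to distribute and the pulled-back relation lives on $\oM_{0,5}$; the only boundary components contributing are precisely the three $2$-$2$ splits described above, so $S=0$.

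The only subtle point, and the step I would be most careful about, is the bookkeeping of signs and of contributions from the flat identity in boundary strata where one side has only two marked points (allowed because the MF CohFT admits a flat identity, cf.\ \eqref{metric-axiom}). Since this is a purely formal consequence of the CohFT axioms and identical in structure to the argument of \cite[Lemma 6.2.6]{FJR}, it suffices to state the conclusion and remark that the proof goes through unchanged; no new ingredient from the MF construction is required.
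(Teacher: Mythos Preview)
Your proposal is correct and matches the paper's approach: the paper does not give a detailed proof but simply states that the lemma follows from the WDVV equations by the same argument as \cite[Lemma~6.2.6]{FJR}, which is exactly the derivation you outline (pull back the boundary relation from $\oM_{0,4}$ to $\oM_{0,k+1}$, cap with $\Lambda^{\PV}_{0,k+1}$, and apply the splitting axiom). One small slip: for $k=4$ there \emph{is} one $\xi_1$ to distribute, so you are on $\oM_{0,5}$ with five markings; the reason $S=0$ is that every boundary divisor on $\oM_{0,5}$ has a component with exactly two markings plus the node, hence a three-point factor that collapses to a $\bullet$-product, leaving only the four $k$-point terms you listed.
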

This lemma is a key tool in the reconstruction process.
By appropriately choosing the insertions in~\eqref{reconst_eq}, we can ensure
that the correlators in the right-hand side are simpler than the one in the left-hand side.

The following important illustration of this idea is the first step of the
reconstruction.
\begin{lem}\label{Step 1}
For $k\geq 4$,
any genus-zero $k$-point \PVs correlator $\langle \xi_1, \ldots, \xi_{k}\rangle$
can be represented
as a linear combination of special
correlators of the form
 \begin{equation}\label{C2}
X = \langle \underbrace{\th_n, \ldots, \th_n}_{\ell_n}, \underbrace{\th_{n-1}, \ldots,
  \th_{n-1}}_{\ell_{n-1}}, \ldots, \underbrace{\th_1, \ldots, \th_1}_{\ell_1}, \alpha, \beta\rangle_{0, r},
\end{equation}
where $r\leq k$, \ $\th_1,\ldots,\th_n$ are standard generators~\eqref{j-generator} of the
algebra $\cHw$, and the elements $\alpha, \beta \in \cHw$ are products of some $\th_i$'s.
\end{lem}
\begin{proof}

Since $\th_1,\ldots,\th_n$ generate the algebra $\cHw$, we may assume that each insertion
$\xi_j$ is a product of $\th_i$'s. We call the number of factors in such a product the
\emph{degree} of the corresponding insertion. Using the string equations,\footnote{The string
  equations are consequence of the existence of a flat unit in a CohFT,   see~\cite[Theorem 5.1.3]{PV16}.} we can further
assume that there are no identity elements among $\xi_1, \ldots, \xi_k$.

If there are at least three insertions, say $\xi_1, \xi_2$ and $\xi_3$, of degree greater than $1$
(i.e.\ they are products of at least two $\th_i$'s),
we factor $\xi_3$ as $\xi_3=\th_j\bullet \wt{\xi_3}$, for some $j$,
and apply~\eqref{reconst_eq} with $\epsilon=\th_j$, $\phi=\wt{\xi_3}$, $\ga=\xi_1$ and
$\de=\xi_2$.

Observe now that in each of the three correlators
in the right-hand side of~\eqref{reconst_eq}, at least one of the last three insertions
is $\th_j$ or $\wt{\xi_3}$,
whose degree is smaller than the degree of $\xi_3$.
If we start with three insertions of the largest possible degrees, we see
that this process allows to decrease the sum of the smallest $k-2$ degrees
of insertions until they all become equal to $1$. This gives the result.
\end{proof}

Next, we will show that for atomic polynomials from Table~\ref{table-basis}, any special
correlator~\eqref{C2} can be reconstructed from correlators of a simpler type.
For this we will rewrite the nonvanishing conditions of Proposition~\ref{nonvanishing} for
correlator~\eqref{C2} in a different way.
Write elements $\alpha, \beta\in \cHw$ as products of generators:
\begin{equation}\label{basis-assumption}
  \alpha=\prod_{i=1}^{n} \th_i^{P_i}, \quad \beta=\prod_{i=1}^{n} \th_i^{Q_i}
\end{equation}
with $(P_1, \ldots, P_n), (Q_1, \ldots, Q_n)
\in \fS_{\bw^T}.$
From the description of the set $\fS_{\bw^T}$ in the last row of Table~\ref{table-basis},
we see that for nonvanishing $\alpha$ and $\beta$ we have
\begin{equation}
\label{expon-limits}
P_i\leq a_i-1 \mathrm{\ and\ } Q_i\leq a_i-1, \mathrm{\ for\ } i=1,\ldots,n.
\end{equation}

Now we define rational numbers $b_i$ and $K_i$, for $i=1,\ldots, n$, by
\begin{equation}\label{b_def}
\left(\begin{array}{c}
b_1\\
\vdots\\
b_n \end{array}\right)= E_{\bw}^{-1} \left(\begin{array}{c}
\ell_1 + P_1 + Q_1 + 2\\
\vdots\\
\ell_n + P_n + Q_n + 2 \end{array}\right),
\end{equation}
and
\begin{equation}\label{ind-k}
K_i= \ell_i - b_i + 1.
\end{equation}
The nonvanishing conditions \eqref{integer-deg} and \eqref{homogeneity-eq} impose strong
constraints on these numbers.
\begin{lem}
\label{lem:sumK}
For a correlator $X$ given by \eqref{C2}, the numbers $K_i$ are integers satisfying
\begin{equation}\label{sumK}
  K_1+K_2+\ldots+K_n =1.
\end{equation}
\end{lem}
\begin{proof}
Applying \eqref{variable-weight} and \eqref{integer-deg} to the correlator $X$,
  we see that   $b_i\in \Z$, thus $K_i\in \Z$.
 Now the equation \eqref{sumK} follows from \eqref{homogeneity-eq}.
\end{proof}
For one variable Fermat polynomials, this gives us the correlators of a very simple form.
\begin{example}
In the Fermat case, $\bw=x_1^a$, equation~\eqref{sumK} gives $K_1=1$,
and~\eqref{b_def} and \eqref{ind-k} imply
$$b_1={\ell_1+P_1+Q_1+2\over a}=\ell_1.$$
Since $P_1, Q_1\leq a-2$, we obtain $\ell_1\leq 2$.
If $\ell_1=2$, then $P_1+Q_1+4=2a$ and we must have $P_1=Q_1=a-2$.
Now we see that
a nonvanishing correlator~\eqref{C2} must be either a three-point correlator or the
four-point correlator $\langle\th_1, \th_1, \th_1^{a-2}, \th_1^{a-2}\rangle$.
\end{example}
The reconstruction
processes for polynomials of chain and loop types
are much more complicated. We state the result below in Proposition \ref{reconstruction}.
For polynomials of the loop type, the same proof as the proof of the similar result in the
FJRW theory~\cite[Theorem 5.19]{HLSW} works after replacing FJRW correlators by \PVs correlators.
In the Appendix, we present our proof for polynomials of the chain type. It is slightly
shorter than the corresponding proof for the FJRW theory given in~\cite{HLSW} and also
treats some cases not considered there.

\begin{prop}
[Reconstruction]\label{reconstruction}
The Frobenius manifold structure of the \PVs CohFT for an atomic polynomial $\bw$ is
uniquely determined by its Frobenius algebra and the following genus-zero four-point
correlators
$$
\begin{array}{ll}
  \fF_1= \langle \th_1, \th_1, \th_1^{a-2}, \th_{J^{-1}}\rangle_{0,4}^\PV,
  & \text{\ for \ } \bw=x_1^a \text{ of the Fermat type};\\
  \fF_i=\langle \th_i, \th_i, \th_{i-1}\th_i^{a_i-2}, \th_{J^{-1}}\rangle_{0,4}^\PV,
 & \text{\ for \ } \bw \text{ of a loop type, \ }   i=1,\ldots,n;  \\
  \fF_n=\langle \th_n, \th_n, \th_{n-1}\th_n^{a_n-2}, \th_{J^{-1}}\rangle_{0,4}^\PV,
   & \text{\ for \ } \bw \text{ of a chain type}.
\end{array}
$$

Here $\th_{J^{-1}}:=\th(\soc(\bw^T))$,
and we use the convention $\th_{0}:=\th_n$.
\end{prop}

Note that $\th_{J^{-1}}=\one_{J^{-1}}\in \cH_{J^{-1}}$ by \eqref{j-and-inverse}.
For the correlators   $\fF_i$ from Proposition~\ref{reconstruction}, the numbers
$b_j$~\eqref{b_def} and $K_j$~\eqref{ind-k} are given by
$$(b_1, \ldots, b_{n-1}, b_n)=(1, \ldots, 1, 2)
\text{\ \ and \ \ }
(K_1, \ldots, K_{n-1}, K_n)=(0, \ldots, 0, 1).$$

\subsection{Calculations of genus-zero four-points \PVs correlators}
To prove Theorem \ref{main-theorem}, it remains to compute the correlators $\fF_i$ from
Proposition~\ref{reconstruction} and match them with the corresponding correlators from
Saito's Frobenius manifold~\cite{Sai, LLS}. The latter have been calculated
in~\cite{HLSW}, using the perturbative formula for primitive forms developed
in~\cite{LLS}. Using Proposition \ref{reconstruction} and~\cite[Proposition 6.8]{HLSW},
Theorem \ref{main-theorem} will follow from
\begin{prop}
\label{mirror-invariant-prop}
For every correlator $\fF_i$ in Proposition \ref{reconstruction}, we have
\begin{equation}
\label{mirror-invariant}
\fF_i=-q_i,
\end{equation}
where $q_i$ is the weight of the variable $x_i$ in the weighted homogeneous polynomial $\bw$.
\end{prop}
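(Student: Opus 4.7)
The plan is to compute each four-point correlator $\fF_i$ using the matrix factorization CohFT tools developed in \cite{PV16} and to match the result with $-q_i$. The approach is uniform across the three atomic types, though the techniques needed differ depending on whether broad sectors appear among the insertions.

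First I identify the decoration $\ogamma = (\gb_1,\gb_2,\gb_3,\gb_4)\in G_\bw^4$ attached to $\fF_i$ via the map $\fI$ from \eqref{group-element}, and verify the consistency $\gb_1\gb_2\gb_3\gb_4 = J_\bw^2$ using \eqref{rho-constriant} and $\fI(\bs(\bw^T))=J^{-1}$, so that $\cSr_0(\ogamma)$ is non-empty. Next I compute the line bundle degrees $\deg L_j$ via \eqref{deg-L-eq}: for the correlators with only narrow insertions, the relations \eqref{rho-constriant} and \eqref{variable-weight} give $\deg L_j \le -1$ for every $j$ with exactly one bundle of degree $-2$, so the Concavity Axiom \cite[Cor.~5.5.3]{PV16} expresses $\fF_i$ as an integral over $\oM_{0,4}\cong\mathbb{P}^1$ of the first Chern class of $R^1\pi_* L_j$. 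A direct Chern-class computation then produces $-q_i$. For all invertible polynomials without a weight-$1/2$ chain variable, the resulting MF values coincide (up to the overall sign noted in the footnote to \eqref{reduced-cohft}) with the FJRW values computed in \cite[Proposition~6.8]{HLSW}, and one may alternatively appeal to that reference for these cases.

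The main obstacle is the chain polynomial with $a_n=2$, where Corollary~\ref{broad-element} shows that $\th_n$ is broad and the correlator $\fF_n=\lan\th_n,\th_n,\th_{n-1},\th_{J^{-1}}\ran^\PV_{0,4}$ contains two broad Chern-character insertions $\Ch(K_{v_n})$ from \eqref{chern-chain}. The concavity axiom no longer applies directly. To proceed I would upgrade the three-point reductions of Proposition~\ref{chain-reduction-prop} and Corollary~\ref{P622-cor} to the four-point setting: by \cite[Prop.~6.2.2]{PV16} the fundamental matrix factorization on $\cSr_0(\ogamma)\times\bigoplus_k V^{\gb_k}$ is still Koszul, and the broad insertions should confine the computation to the subspace $\{x_{n-1}=x_n=0\}$, reducing it to a residue integral for a lower-dimensional chain singularity. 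Combined with the pairing factor $-a_{n-1}$ from \eqref{chain-pairing}, this should produce $-q_n=-1/2$. Making this four-point reduction rigorous is the technical heart of the computation and is exactly the place where the MF framework is essential and the FJRW methods of \cite{HLSW} are unavailable.
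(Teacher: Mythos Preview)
Your plan has two genuine gaps.

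First, your dichotomy ``narrow insertions $\Rightarrow$ concave, broad insertions only in the chain $a_n=2$ case'' is incorrect. The non-concave correlators $\fF_n$ are classified in the paper (Table~\ref{nonconcave}) into four types: type~(a) is a loop with $a_n=2$ and $n\ge 3$, where all insertions are narrow yet $\deg L_n=0$, so concavity fails; types~(b) and~(c) are two-variable loops $x_1^{a_1}x_2+x_2^2x_1$ where $\th_n$ (and in case~(c) also $\th_{n-1}\th_n^{a_n-2}$) is broad. Your claim that $\deg L_j\le -1$ for all $j$ in the narrow case is therefore false, and deferring the loop broad cases to \cite{HLSW} presupposes that the MF and FJRW correlators agree there, which is not established in general and is precisely what one is trying to avoid.

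Second, and more seriously, your proposed treatment of the chain $a_n=2$ case does not work as stated. Proposition~\ref{P622-prop} and Corollary~\ref{P622-cor} rest on \cite[Prop.~6.2.2]{PV16}, which is specific to $(g,r)=(0,3)$: there the moduli space is a point and the fundamental matrix factorization is a single Koszul matrix factorization on a vector space. For $(g,r)=(0,4)$ the moduli space is a curve, the line bundles $L_j$ vary in families, and there is no statement in \cite{PV16} asserting the fundamental matrix factorization is Koszul in the required sense. The paper does \emph{not} attempt such an extension. Instead it applies Gu\'er\'e's formula \cite[Thm.~3.21]{Gue} for the virtual class, which is valid in the MF setting for all invertible polynomials and handles broad insertions by twisting $\cL_j$ along the ``crossed'' markings. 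This yields (Lemma after \eqref{guere-formula})
\[
c_{\rm vir}^\PV(\fF_n)=-a_{n-1}\Ch_1(R\pi_*\cL_{n-1})+\Ch_1(R\pi_*\cL_n),
\]
and then Chiodo's formula \cite{Chi} computes each $\Ch_1$ term explicitly over $\oM_{0,4}$, giving the values in Table~\ref{table-calculation}. This single method covers all four non-concave types uniformly; the concave cases are handled exactly as in \cite[Sec.~6]{HLSW}.
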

The rest of the paper will be devoted to the proof of Proposition~\ref{mirror-invariant-prop}.

\subsubsection{Concavity}
We separate the correlators $\fF_i$ in
Proposition \ref{reconstruction} into two cases:
\begin{enumerate}[(i)]
\item The correlator is concave.
\item The correlator is not concave.
\end{enumerate}
According to the classification in~\cite[Lemma 6.5 and Lemma 6.6]{HLSW},
modulo a cyclic permutation of the variables in a polynomial of the loop type, the only
non-concave correlators in Proposition~\ref{reconstruction} is
$$\fF_n =\langle \th_n, \th_n, \th_{n-1}\th_n^{a_n-2}, \th_{J^{-1}}\rangle_{0,4}^{\PV},$$
where the polynomial $\bw$ belongs to one of the four cases listed in Table~\ref{nonconcave}.

\begin{table}[H]
\centering
\caption{Non-concave correlators $\fF_n$}
\label{nonconcave}
\renewcommand{\arraystretch}{2}
 \begin{tabular}{|c|c|c|c|c|c|}
 \hline
Type
& Polynomial $\bw$
& Constraints
&$\th_{n}$
&$\th_{n-1}\th_{n}^{a_n-2}$
&$\th_{J^{-1}}$
\\
\hline
(a)
&$\sum\limits_{i=1}^{n-1}x_i^{a_i}x_{i+1}+x_n^2x_1$
&$ n\geq3$
& narrow
& narrow
&narrow
\\
\hline
(b)
&$\sum\limits_{i=1}^{n-1}x_i^{a_i}x_{i+1}+x_n^2x_1$
&$n=2, a_1\geq 3$
&broad
&narrow
&narrow
\\
\hline
(c)
&$\sum\limits_{i=1}^{n-1}x_i^{a_i}x_{i+1}+x_n^2x_1$
& $n=2$, $a_1=2$
&broad
&broad
&narrow
\\
\hline
(d)
& $\sum\limits_{i=1}^{n-1}x_i^{a_i}x_{i+1}+x_n^2$
&none
&broad
&narrow
&narrow
\\\hline
\end{tabular}
\end{table}

More explicitly, in case (a), all the insertions are narrow but the correlator is not concave.
In the remaining three cases, there exists at least two broad insertions in the correlator.

The calculations for concave
correlators are exactly the same as the calculations of the
corresponding FJRW invariants, which was done
in~\cite[Section 6]{HLSW}.
For nonconcave correlators, we will first use Gu\'er\'e's formula~\cite{Gue} to calculate
the virtual class of the \PVs CohFT.
Indeed, Gu\'er\'e's formula can be applied to all the cases in
Proposition~\ref{reconstruction}.
In the concave case, it is just the Witten's top Chern class defined in~\cite{PV01}.

To describe and use Gu\'er\'e's formula, we need some combinatorial preparation.
Consider the correlator
$\fF_n =\langle \th_n, \th_n, \th_{n-1}\th_n^{a_n-2}, \th_{J^{-1}}\rangle_{0,4}^\PV,$
the decorations of this correlator are denoted by $\ogamma=(\gb_1, \gb_2, \gb_3, \gb_4)$.
The moduli of $\Ga_{\bw}$-spin structure is $\cS_{0,4}(\ogamma)$. The boundary strata are
labeled by the following $G_\bw$-decorated dual graphs
\begin{figure}[H]
\centering
\begin{tikzpicture}[xscale=1,yscale=1]
\draw [-] (0, 0)--(north west:1);
\draw [-] (0, 0)--(south west:1);
\draw [-] (0, 0)--(.85, 0);
\draw [-] (1.15, 0)--(2, 0);
\draw [-] (2, 0)--+(north east:1);
\draw [-] (2, 0)--+(south east:1);
\node [ left] at (north west:1) {$\gb_1$};
\node [ left] at(south west:1) {$\gb_2$};
\node [ right] at (2.7, .7) {$\gb_3$};
\node [right] at (2.7, -.7) {$\gb_4$};
\node [above] at (.5, 0){$\gb_{1,+}$};
\node [above] at (1.5, 0){$\gb_{1,-}$};
\draw [fill=black] (0, 0) circle [radius = .05];
\draw [fill=black] (2, 0) circle [radius = .05];

\begin{scope}[shift ={(5, 0)}]
\draw [-] (0, 0)--(north west:1);
\draw [-] (0, 0)--(south west:1);
\draw [-] (0, 0)--(.85, 0);
\draw [-] (1.15, 0)--(2, 0);
\draw [-] (2, 0)--+(north east:1);
\draw [-] (2, 0)--+(south east:1);
\node [ left] at (north west:1) {$\gb_1$};
\node [ left] at(south west:1) {$\gb_3$};
\node [ right] at (2.7, .7) {$\gb_2$};
\node [right] at (2.7, -.7) {$\gb_4$};
\node [above] at (.5, 0){$\gb_{2,+}$};
\node [above] at (1.5, 0){$\gb_{2,-}$};
\draw [fill=black] (0, 0) circle [radius = .05];
\draw [fill=black] (2, 0) circle [radius = .05];
\end{scope}

\begin{scope}[shift ={(10, 0)}]
\draw [-] (0, 0)--(north west:1);
\draw [-] (0, 0)--(south west:1);
\draw [-] (0, 0)--(.85, 0);
\draw [-] (1.15, 0)--(2, 0);
\draw [-] (2, 0)--+(north east:1);
\draw [-] (2, 0)--+(south east:1);
\node [ left] at (north west:1) {$\gb_1$};
\node [ left] at(south west:1) {$\gb_4$};
\node [ right] at (2.7, .7) {$\gb_2$};
\node [right] at (2.7, -.7) {$\gb_3$};
\node [above] at (.5, 0){$\gb_{3,+}$};
\node [above] at (1.5, 0){$\gb_{3,-}$};
\draw [fill=black] (0, 0) circle [radius = .05];
\draw [fill=black] (2, 0) circle [radius = .05];
\end{scope}
\end{tikzpicture}
\caption{Boundary strata on $\cS_{0,4}(\ogamma)$}
\label{boundary}
\end{figure}

Each vertex represents a genus-zero component, and each half-edge represents a marking
(labeled with decorations $\gb_j$) or a node.
The decorations $\gb_{i,\pm}\in G_\bw$ on the same node are {\em balanced}, that is
$$\gb_{i,+}^{(j)}+\gb_{i,-}^{(j)}\equiv 0\mod 1.$$
According to the Selection Rule~\eqref{select},
decorations $\gb_{i,\pm}$ are determined by the other decorations on the same component.

For $j=1,\ldots,n$, we define {\em decoration vectors}
$$
v^{(j)}(\fF_n)=\left(\gb_1^{(j)}, \ldots, \gb_4^{(j)}\right), \quad v_+^{(j)} =
\left(\gb_{1, +}^{(j)}, \gb_{2, +}^{(j)}, \gb_{3, +}^{(j)}\right).
$$
For $j=n-1$ and $j=n$, we list these vectors non-concave correlators of type (b), (c), (d)
in Table~\ref{decoration-vector}.

\begin{table}[h]
  \centering
  \caption{Decoration vectors}
  \label{decoration-vector}
  \renewcommand{\arraystretch}{2}
 \begin{tabular}{|c|c|c|c|c|}
 \hline
Type
&$v^{(n-1)}(\fF_n)$
&$v_+^{(n-1)}$
&$v^{(n)}(\fF_n)$
&$v_+^{(n)}$
\\
\hline
(b)
&$(0,0,{3\over 2a_1-1}, {2a_1-2\over 2a_1-1})$
&$1-{1\over 2a_1-1}$
&$(0,0,{a_1-2\over 2a_1-1}, {a_1\over 2a_1-1})$
&$1-{1\over 2a_1-1}$
\\
\hline
(c)
&$(0,0,0,{2\over 3})$
&$({1\over 3}, {1\over 3}, {1\over 3})$
&$(0,0,0,{2\over 3})$
&$({1\over 3}, {1\over 3}, {1\over 3})$
\\
\hline
(d)
&$(0,0,{3\over 2a_{n-1}}, 1-{1\over 2a_{n-1}})$
&$({1\over 2a_{n-1}},{a_{n-1}-1\over a_{n-1}},{a_{n-1}-1\over a_{n-1}})$
&$(0,0,{1\over 2}, {1\over 2})$
&$({1\over 2}, 0, 0)$
\\\hline
\end{tabular}
\end{table}

The Type (a) case is more complicated, we only list
$$
v^{(n-1)}(\fF_n) = \left(\sum_{j=1}^{n-2}\rho_{j}^{(n-1)}, \sum_{j=1}^{n-2}\rho_{j}^{(n-1)},
  -3\rho_{n}^{(n-1)}+\sum_{j=1}^{n-2}\rho_{j}^{(n-1)}, 1-q_{n-1}\right)
$$
and
$$
v^{(n)}(\fF_n) = \left(\rho_n^{(n)}+\sum_{j=1}^{n}\rho_{j}^{(n)},
 \rho_n^{(n)}+\sum_{j=1}^{n}\rho_{j}^{(n)}, \sum_{j=1}^{n}\rho_{j}^{(n)}+\rho_{n-1}^{(n)},   1-q_n\right).
$$
We observe that in type (b) and (d), the first two insertions are broad, in type (c), the first
three insertions are broad, and there is no broad insertion in type (a).

\subsubsection{Gu\'er\'e's formula}
Let $\bw$ be an atomic invertible polynomial.
Let
$$Y=\langle v_1,\ldots,v_r\rangle_{g,r}^{\PV} =
\int_{\oM_{g,r}}\Lambda_{g,r}^\PV(\ogamma)(v_1\ot\ldots\ot v_r)
$$
be a correlator for the \PVs CohFT~\eqref{reduced-cohft}
decorated by an $r$-tuple $\ogamma=(\gb_1, \ldots, \gb_r)$.

Choose $v_i\in\cH_{\gb_i}$ from the standard basis. If $v_i$ is broad, then $v_i$ has one of the
forms in \eqref{chern-odd}, \eqref{chern-even}, \eqref{chern-chain}.
Following~\cite{Gue}, we call a variable $x_j$ {\em crossed to $v_i$}
if $\gb_i^{(j)}=0$, and $(-a_j x_j^{a_j-1})$ is not in \eqref{chern-odd}, \eqref{chern-even},
\eqref{chern-chain}.
For example, $x_j$ is crossed to $v_i=\Ch(K_{\rm odd})$ if $j$ is odd.
Define a line bundle $\cL_j^{\fC}:=\cL_j(-\sum p_i)$ by twisting the markings $p_i$ such that $x_j$
is crossed to $v_i$. The number of such markings with be denoted by $r_j$.

Let $t(j)$ be the unique subscript
such that $x_j^{a_{j}}x_{t(j)}$ is a monomial of $\bw$.
If a correlator contains at least one variable $x_j$ such that $H^0(\cC, \cL_j^{\fC})=0$,
then we can define
$$
\lambda_k=
\left\{
\begin{array}{ll}
 \lambda_j^{-a_j}, & \text{ if } k=t(j) \text{ and } H^0(\cC, \cL_j^{\fC})\neq0;\\
 \lambda, & \text{ otherwise}.
\end{array}
\right.
$$
For the correlator $Y$, denote the image of
$\phi_{g}(\ogamma)$ (cf.~\eqref{PV-class})
by $c^\PV_{\rm vir}(Y)$. If there is some $j$ such that $H^0(\cC, \cL_j^{\fC})=0$ for any
curve $\cC$ in  $\cS_{g}(\ogamma)$,
according to Gu\'er\'e's formula in~\cite[Theorem 3.21]{Gue} and the sign convention in
\eqref{reduced-cohft},
we have
\begin{equation}
\label{guere-formula}
-c^\PV_{\rm vir}(Y)=
\lim_{\lambda\to 1}\left(\prod_{j=1}^{n}(1-\lambda_j)^{-\Ch_0(R\pi_*\cL_j)+r_j}\right)
\exp\left(\sum_{j=1}^{n} \sum_{\ell\geq1} s_{\ell}(\lambda_j)\Ch_\ell(R\pi_*\cL_j)\right).
\end{equation}
Here $\Ch_\ell$ is the degree $\ell$ term of the Chern character, and
\[
s_\ell(x) = \frac{B_\ell(0)}{\ell} +(-1)^\ell\sum_{k=1}^\ell(k-1)!\left(\frac{x}{1-x} \right)^k\ga(\ell,k),
\]
where $B_{\ell}(t)$ is the $\ell$th
Bernoulli polynomial and $\ga(\ell,k)$ is
the coefficients of $z^{\ell}$ in the Taylor expansion of $\ell!(e^z-1)^k/k!$ at $z=0$.

Using the combinatorial preparation in Table \ref{decoration-vector}, we obtain
\begin{lem}
\label{nonconcave-variable}
Consider all the correlators $\fF_n$ in Table \ref{nonconcave}.
If $j\neq n-1$, then $H^0(\cC, \cL_{j}^{\fC})=0$  for any $\cC$.
If $j=n-1$, there exists some $\cC$, such that
$H^0(\cC, \cL_{n-1})\neq0.$
\end{lem}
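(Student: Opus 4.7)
My plan is to verify the lemma by a direct case-by-case computation across the four types (a)--(d) of Table \ref{nonconcave}; both parts of the statement reduce to degree bookkeeping for the line bundles $\cL_j$ and their twists $\cL_j^{\fC}$ on the orbicurves parametrized by $\cS_{0,4}(\ogamma)$.

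The preparatory step in each type is to identify the crossed variables attached to each broad insertion of $\fF_n$ by reading them off the Chern character formulas \eqref{chern-odd}, \eqref{chern-even}, \eqref{chern-chain}; this pins down the multiplicities $r_j$ and hence $\cL_j^{\fC} = \cL_j(-\sum p_i)$. Applying the degree formula \eqref{deg-L-eq} in genus zero with four markings, one has $\deg L_j = 2q_j - \sum_{i=1}^{4} \th_{\gb_i}^{(j)}$; the decoration data listed in Table \ref{decoration-vector} (and the analogous data for $j<n-1$, obtained from $\th_{\gb_i}^{(j)} \equiv q_j + \sum_{\nu} m_{i,\nu} \rho_{\nu}^{(j)} \pmod{1}$ and the explicit formulas \eqref{chain-entry}, \eqref{loop-entry} for $\rho_{\nu}^{(j)}$) then lets me read off $\deg \cL_j^{\fC}$ on the smooth locus.

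For the first claim ($H^0(\cC, \cL_j^{\fC}) = 0$ for every $\cC$ and every $j \neq n-1$), I would verify that this degree is negative on the smooth locus and, more importantly, that on each irreducible component of every orbicurve in each of the three boundary strata of Figure \ref{boundary} the corresponding degree of $\cL_j^{\fC}$ remains negative. The nodal computations use the balancing condition $\gb_{i,+}^{(j)} + \gb_{i,-}^{(j)} \equiv 0 \pmod 1$ at each node together with Lemma \ref{select} to uniquely pin down the node decorations, after which each component is a $\mathbb{P}^1$ with orbifold markings and negativity of the degree forces vanishing of $H^0$, which then propagates to the whole nodal curve.

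For the second claim, I would exhibit a specific orbicurve---most naturally one in the $\gb_{1,\pm}$ boundary stratum separating the two broad insertions from the other two markings---on which the component carrying $\{p_1,p_2\}$ has $\cL_{n-1}$ of non-negative degree. In cases (b)--(d) both $\gb_1$ and $\gb_2$ fix $x_{n-1}$, so $\th_{\gb_1}^{(n-1)} = \th_{\gb_2}^{(n-1)} = 0$, and the three-pointed component degree $q_{n-1} - \th_{\gb_{1,+}}^{(n-1)}$ is easily non-negative for an allowed choice of $\gb_{1,+}$; in case (a) a parallel computation using the formulas for $v^{(n-1)}(\fF_n)$ given just after Table \ref{decoration-vector} produces a component of degree $\ge 0$. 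The main obstacle is not conceptual but arithmetic: each of the four types demands its own separate case analysis, and the decoration vectors involve the intricate formulas of Proposition \ref{broad-relation} and the type-specific arithmetic of $\rho_j^{(i)}$; a useful sanity check throughout is that the degrees on the two components of any nodal orbicurve, after accounting for orbifold contributions at the node, must recover the smooth degree from \eqref{deg-L-eq}.
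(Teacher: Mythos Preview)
Your plan is essentially the same approach as the paper's: compute the degree of $\cL_j^{\fC}$ on the generic curve and on each component of each boundary stratum, and deduce vanishing of $H^0$.  There is, however, a genuine gap in the nodal step.

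You assert that for $j\neq n-1$ the degree of $\cL_j^{\fC}$ \emph{remains negative on every irreducible component}, and that this negativity ``propagates to the whole nodal curve''.  The paper's own computation (for $j=n$) shows this assertion is not always true: at certain boundary strata the node is \emph{broad} for $x_j$ (i.e.\ $\gb_{i,+}^{(j)}=0$), and the unordered pair of component degrees is $(-2,0)$ rather than $(-2,-1)$.  On the degree-$0$ component one has $H^0\cong\C$, so component-wise negativity fails and your propagation argument breaks.  What rescues the conclusion is the normalization long exact sequence
\[
0\to H^0(\cC,\cL_j^{\fC})\to H^0(\cC_1,\cL_j^{\fC}|_{\cC_1})\oplus H^0(\cC_2,\cL_j^{\fC}|_{\cC_2})\to H^0(\cN,\cL_j^{\fC}|_{\cN})\to\cdots
\]
together with the observation that at a broad node $H^0(\cN)\cong\C$ and the restriction map from the degree-$0$ component is an isomorphism; hence the kernel is still zero.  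Your proposal does not mention this mechanism, and without it the argument is incomplete precisely in the broad-node cases.

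A minor point of comparison: for $j<n-1$ the paper simply quotes \cite[Lemma~6.5]{HLSW} to get $\deg\cL_j^{\fC}=\deg\cL_j=-1$ uniformly over all of $\cS_{0,4}(\ogamma)$ (these indices have $r_j=0$), so no separate boundary check is needed there.  Your plan to redo that computation from the $\rho^{(j)}_\nu$ formulas is correct but redundant.
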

\begin{proof}
 For $j<n-1$, by~\cite[Lemma 6.5]{HLSW},  $\deg\cL_j^{\fC}=\deg \cL_j=-1$ on each curve  $\cC$.
Thus $H^0(\cC, \cL_{j}^{\fC})=0$, and
\begin{equation}
\label{vanish-chern}
\Ch_1(R\pi_*\cL_j)=0, \quad j<n-1.
\end{equation}

Now we consider $j\geq n-1$.
Using the degree formula \eqref{deg-L-eq}, we see that on any smooth
$\cC$, $(\deg\cL_{n-1}, \deg\cL_{n})=(-1, 0)$ for type (a), (b), (d), and $(\deg\cL_{n-1},
\deg\cL_{n})=(0, 0)$ for type (c).
We can check that $(r_{n-1}, r_{n})=(0,2)$ for type (a), (b), (d) and $(r_{n-1}, r_{n})=(1,2)$ for
type (c).
For all the cases, we have
$\deg\cL_j^{\fC}=\deg\cL_j-r_j=-1-\de_{j}^n<0.$
Thus if $\cC$ is smooth, $H^0(\cC, \cL_j^{\fC})=0.$
Moreover,  we have a virtual degree formula
\begin{equation}
\label{virtual-recursive}
{\rm degvir} (\cL_j)
:=-\Ch_0(R\pi_*\cL_j)+r_j=\de_{j}^{n}.
\end{equation}

 If $\cC$ is singular, then it has a node $\cN$ and its normalization has two component,
 denoted by  $\cC_1$ and $\cC_2$.
 Using Table \ref{decoration-vector}, we can check that if $j\neq n-1$,
 the unordered pair $(\deg\cL_j^{\fC}|_{\cC_1}, \deg\cL_j^{\fC}|_{\cC_2})=(-2,-1)$ and the node
 $\cN$ is a narrow node with $H^0(\cN, \cL_j^{\fC}|_\cN)=0$,
 or $(\deg\cL_j^{\fC}|_{\cC_1}, \deg\cL_j^{\fC}|_{\cC_2})=(-2, 0)$
 and the node is broad with $H^0(\cN, \cL_j^{\fC}|_\cN)=\C$.
 In both cases, we have $H^0(\cC, \cL_j^{\fC})=0$, using the long exact sequence
 \begin{eqnarray*}
0\to H^0(\cC, \cL_j^{\fC})\to H^0(\cC_1, \cL_j^{\fC}|_{\cC_1})\oplus H^0(\cC_2,
   \cL_j^{\fC}|_{C_2})\to H^0(\cN, \cL_j^{\fC}|_\cN) \\
\to H^1(\cC, \cL_j^{\fC}) \to H^1(\cC_1, \cL_j^{\fC}|_{\cC_1})\oplus H^1(\cC_2, \cL_j^{\fC}|_{\cC_2})\to0
\end{eqnarray*}

 If $j\neq n-1$, the only exception happens when both $\th_n$ belong to the same component, say
 $\cC_1$, then  $(\deg\cL_j^{\fC}|_{\cC_1}, \deg\cL_j^{\fC}|_{\cC_2})=(0, -2)$ and the node $\cN$ is
 narrow.
 Then  $H^0(\cC, \cL_i^{\fC})\neq 0$.
\end{proof}

Using Lemma \ref{nonconcave-variable}, we can apply the formula \eqref{guere-formula} to the
correlator $\fF_n$ and get
\begin{lem}
For all the Type (ii) correlators $\fF_n$, the reduced CohFT is
\begin{equation}
\label{PV-4pt}
c_{\rm vir}^\PV(\fF_n)
=-a_{n-1}\Ch_1(R\pi_*\cL_{n-1})+\Ch_1(R\pi_*\cL_{n}).
\end{equation}
\end{lem}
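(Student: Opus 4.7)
The plan is to feed the data assembled so far directly into Gu\'er\'e's formula \eqref{guere-formula}. Lemma \ref{nonconcave-variable} verifies the hypothesis of that formula: for any $j\ne n-1$ and any orbicurve $\cC\in \cSr_{0,4}(\ogamma)$, $H^0(\cC, \cL_j^{\fC})=0$. In each of the four Type (ii) subcases of Table \ref{nonconcave}, the polynomial $\bw$ contains the monomial $x_{n-1}^{a_{n-1}}x_n$, so $t(n-1)=n$; this forces $\lambda_n = \lambda^{-a_{n-1}}$ while $\lambda_j = \lambda$ for every other $j$. The virtual-degree identity \eqref{virtual-recursive}, $-\Ch_0(R\pi_*\cL_j)+r_j = \delta_j^n$, then collapses the prefactor $\prod_j (1-\lambda_j)^{-\Ch_0(R\pi_*\cL_j)+r_j}$ in \eqref{guere-formula} to the single scalar factor $1-\lambda^{-a_{n-1}}$.

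Next I would strip the exponential down to the only part that can survive after integration. Since $\fF_n$ is a four-point genus-zero correlator and $\cSr_{0,4}(\ogamma)$ has complex dimension one, cohomology classes of real degree exceeding two vanish on it. Because $\Ch_\ell$ sits in degree $2\ell$, all terms with $\ell\ge 2$, as well as all quadratic and higher products of Chern characters inside $\exp\bigl(\sum_{j}\sum_{\ell\ge 1}s_\ell(\lambda_j)\Ch_\ell(R\pi_*\cL_j)\bigr)$, are irrelevant. Combining this observation with the vanishing \eqref{vanish-chern} of $\Ch_1(R\pi_*\cL_j)$ for $j<n-1$, the degree-two piece of $\exp(B(\lambda))$ reduces to $s_1(\lambda)\,\Ch_1(R\pi_*\cL_{n-1}) + s_1(\lambda^{-a_{n-1}})\,\Ch_1(R\pi_*\cL_n)$.

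Finally, I would evaluate the limit $\lambda\to 1$ using $s_1(x) = -\tfrac12 - \tfrac{x}{1-x}$ and the Taylor expansion $1-\lambda^{-a_{n-1}} = -a_{n-1}(1-\lambda)+O((1-\lambda)^2)$. A direct check gives $\lim_{\lambda\to 1}(1-\lambda^{-a_{n-1}})s_1(\lambda) = a_{n-1}$ and $\lim_{\lambda\to 1}(1-\lambda^{-a_{n-1}})s_1(\lambda^{-a_{n-1}}) = -1$, so the right-hand side of \eqref{guere-formula} equals $a_{n-1}\Ch_1(R\pi_*\cL_{n-1}) - \Ch_1(R\pi_*\cL_n)$; negating (to undo the minus sign on the left of \eqref{guere-formula}) yields \eqref{PV-4pt}. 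The main potential obstacle is ensuring that the preparatory combinatorics, namely $t(n-1)=n$, \eqref{vanish-chern}, and \eqref{virtual-recursive}, hold uniformly across all four subcases of Table \ref{nonconcave}; but this is precisely what Lemma \ref{nonconcave-variable} delivers, after which the proof reduces to the one-variable limit above.
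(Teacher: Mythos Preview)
Your proof is correct and follows essentially the same route as the paper: invoke Lemma~\ref{nonconcave-variable} to single out $j=n-1$ as the only nonconcave index, read off $\lambda_n=\lambda^{-a_{n-1}}$ and $\lambda_j=\lambda$ otherwise, use \eqref{virtual-recursive} to collapse the prefactor to $(1-\lambda_n)$, truncate the exponential to its $\Ch_1$ part, and take the limit. The only cosmetic differences are that you justify the truncation explicitly via the one-dimensionality of $\cSr_{0,4}(\ogamma)$ and spell out the Taylor computation of the two limits, whereas the paper absorbs both into the phrase ``a direct calculation.''
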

\begin{proof}
By Lemma \ref{nonconcave-variable}, we see all the variables except $x_{n-1}$ are concave.
Thus
\begin{equation}
\label{lambda-value}
\lambda_{j}=
\left\{
\begin{array}{ll}
\lambda^{-a_{n-1}},
  & j=n; \\
\lambda, & j\neq n.
\end{array}
\right.
\end{equation}

Now the formula \eqref{PV-4pt} follows from the calculation
\begin{eqnarray*}
  -c_{\rm vir}^\PV(\fF_n)
  &=&\lim_{\lambda\to 1} (1-\lambda_n)\exp\left(\sum_{j=1}^n
      \sum_{\ell\geq1} s_{\ell}(\lambda_j)\Ch_\ell(R\pi_*\cL_j)\right)\\
&=&\lim_{\lambda\to 1} (1-\lambda_n)\left(1+\sum_{j=1}^{n}\left(-{1\over 2}-{\lambda_j\over
    1-\lambda_j}\right)\Ch_1(R\pi_*\cL_j)\right)\\
  &=&-\sum_{j=1}^n \lim_{\lambda\to 1}{\lambda_j(1-\lambda_n)\over 1-\lambda_j}\Ch_1(R\pi_*\cL_j)
\\
  &=&-\lim_{\lambda\to 1}{\lambda_{n-1}(1-\lambda_n) \over 1-\lambda_{n-1}}\Ch_1(R\pi_*\cL_{n-1}) -
      \lim_{\lambda\to 1}{\lambda_n(1-\lambda_n)\over 1-\lambda_n}\Ch_1(R\pi_*\cL_n)\\
  &=&a_{n-1}\Ch_1(R\pi_*\cL_{n-1})-\Ch_1(R\pi_*\cL_{n}).
\end{eqnarray*}
Here the first equality uses \eqref{guere-formula} and \eqref{virtual-recursive}, the fourth
equality uses \eqref{vanish-chern}, and the last equality uses  \eqref{lambda-value}.
\end{proof}

\subsubsection{Calculation of Chern characters}
Using Chiodo's formula~\cite[Theorem 1.1.1]{Chi}, we have
\begin{eqnarray*}
T_j&:=&{1\over \deg ({\rm st})} \cdot ({\rm st})_* \Ch_1(R\pi_*\cL_{j})\\
&=&\int_{\oM_{0,4}}
\left(\frac{B_{2}(q_j)}{2} \kappa_1 -\sum_{i=1}^4 \frac{B_{2}(\th^{(j)}_{\gb_i})}{2}\psi_i
    +\sum_{k=1}^{3}\frac{B_{2}(\th^{(j)}_{\gb_{k,+}})}{2}[\Ga_{k}]\right).
\end{eqnarray*}
Here $\kappa_1$ is the first kappa class and $\psi_i$ is the $i$-th psi class on $\oM_{0,4}$ and
$\Ga_1,\Ga_1, \Ga_1, $ are the $G_\bw$-decorated dual graphs of the boundary strata shown in
Figure~\ref{boundary}.

 By Table \ref{decoration-vector}, case (d),  the decoration vector for $\cL_n$ is
 $v^{(n)}(\fF_n)=(0,0,{1\over 2}, {1\over 2})$ and the  decoration vector $v_+^{(n)}=({1\over 2},  0,0)$.
 Since $q_n={1\over 2}$, we have
$$
T_n={1\over 2}\left(B_2(q_n)-2B_2(0)-2B_2\Big({1\over 2}\Big)+B_2\Big({1\over 2}\Big)+2B_2(0)\right)=0.$$
Other cases are computed similarly.
Integrating \eqref{PV-4pt},
we have
$$\fF_n=-a_{n-1}T_{n-1}+T_n.$$
We list the explicit calculations in Table \ref{table-calculation}.

\begin{table}[h]
\centering
\caption{Calculation of non-concave correlators}
\label{table-calculation}
\renewcommand{\arraystretch}{2}
 \begin{tabular}{|c|c|c|c|c|}
 \hline
Type
& Polynomial $\bw$
&$T_{n-1}$
&$T_n$
&$\fF_n$
\\
\hline
(a)
&$\sum\limits_{i=1}^{n-1}x_i^{a_i}x_{i+1}+x_n^2x_1, n\geq3$
&$-q_{n-1}-2\rho_{n}^{(n-1)}$
&$-1+2\rho_n^{(n)}$
&$-q_n$
\\
\hline
(b)
&$x_1^{a_1}x_2+x_2^2x_1, a_1\geq 3$
&${1\over 2a_1-1}$
&${1\over 2a_1-1}$
&$-{a_1-1\over 2a_1-1}$
\\
\hline
(c)
&$x_1^{2}x_2+x_2^2x_1$
&${1\over 3}$
&${1\over 3}$
&$-{1\over 3}$
\\
\hline
(d)
&$\sum\limits_{i=1}^{n-1}x_i^{a_i}x_{i+1}+x_n^2$
&${1\over 2a_{n-1}}$
&$0$
&$-{1\over 2}$
\\\hline
\end{tabular}
\end{table}
Now Proposition \ref{mirror-invariant-prop} is proved.
We remark that case (a) is computed in~\cite[Appendix]{HLSW}, where
$$\fF_n=-a_{n-1}(-q_{n-1}-2\rho_{n}^{(n-1)})+(-1+2\rho_n^{(n)})=-q_n.$$

\appendix
\section{Reconstruction for polynomials of the chain type}
\label{sec-appendix}
In this section we present a proof of the reconstruction result
Proposition~\ref{reconstruction} for a polynomial of the chain type,
$\bw=\sum\limits_{i=1}^{n-1}x_i^{a_i}x_{i+1}+x_n^{a_n}$.

Consider a nonzero correlator~\eqref{C2}
$$
X=\langle \underbrace{\th_n, \ldots, \th_n}_{\ell_n}, \underbrace{\th_{n-1}, \ldots,
  \th_{n-1}}_{\ell_{n-1}}, \ldots, \underbrace{\th_1, \ldots, \th_1}_{\ell_1}, \alpha, \beta
\rangle,
$$
with $$\alpha=\prod_{i=1}^{n} \th_i^{p_i}, \  \beta=\prod_{i=1}^{n} \th_i^{q_i}\in \cHw,$$
as in~\eqref{basis-assumption} (except that here we are using lower-case letters $p_i$ and
$q_i$ which should not lead to a confusion since the weights of the variables do not
appear in the appendix).
Note that from the description of the standard basis in
Table~\ref{table-basis}, it follows that
\begin{equation}
  \label{eq:ineq}
p_i\leq a_i-1, \ q_i\leq a_i-1, \text{\ and \ }
p_i+q_i\leq 2a_i-2, \text{\ \ for\ \ } i=1,\ldots,n.
\end{equation}

Together with formula~\eqref{chain-entry} for $\rho_j^{(i)}$ and the fact that
$\ell_i\geq 0$,
this gives the following constraints between the integers $K_1, \ldots, K_n$ defined
by~\eqref{b_def} and~\eqref{ind-k}.
\begin{align}\label{mplusn_eq}
  p_i+q_i&=a_i(\ell_i-K_i+1)+(\ell_{i+1}-K_{i+1}+1)-\ell_i-2,\\
\label{pq_n} p_n+q_n&=a_n(\ell_n-K_n+1)-\ell_n-2,\\
\label{eq2}   a_iK_i+K_{i+1}&\geq (a_i-1)(\ell_i-1)+\ell_{i+1},\\
  \label{k_n}a_nK_n&\geq (a_n-1)(\ell_n-1)-1,\\
\label{key_equation}
  K_i+K_{i+1}&\geq (1-a_i)(1+K_i),
\end{align}
for all $i=1,\ldots,n-1$.
These equations further imply the following additional relations.
\begin{lem}\label{key-corollary}
We have
  \begin{itemize}
  \item
If $K_i < 0$, \ for some $i\leq n-1$, \ then $K_i+K_{i+1}\geq 0$.
\item If $K_i<0$ and $K_i+K_{i+1}= 0$, then $(K_i, K_{i+1})=(-1, 1)$. \\
  In this case  $\ell_i=\ell_{i+1}=0$,
$p_i+q_i=2a_i-2$, and $p_{i+1}+q_{i+1}=\ell_{i+2}-K_{i+2}-1$.

\item    $-1\leq K_n \leq \ell_n$ and, if $K_n=-1$, then $\ell_n=0$, $p_n+q_n=2a_n-2$.
\end{itemize}
  \end{lem}
This leads to a complete description of possible collections  $(K_1, \ldots, K_n)$ with  $K_n\geq 0$.
\begin{lem}\label{loop_condition_lemma}
If $K_n\geq 0$, then the tuple $(K_1, \ldots, K_n)$ is of one of the following kinds:
\begin{itemize}
\item A concatenation of some $(0)$'s and $(-1, 1)$'s with one $(1)$, in any order as long as it
  does not end with $(-1, 1)$;
 \item A concatenation of some $(0)$'s and $(-1, 1)$'s with one of $(1)$, $(-1, 2)$, or $(-2, 3)$
 ending with two non-negative numbers.
\end{itemize}
\end{lem}
\begin{proof}
First, we observe that $K_{n-1}\geq 0$. Indeed, the assumption $K_{n-1}\leq -1$ together with
$\ell_n\geq K_n$ contradicts to \eqref{eq2}.

Now, Lemma \ref{key-corollary} implies that if $K_i<0$ for some $i< n-1$, then $K_{i-1}\geq 0$
and $K_i+K_{i+1}\geq 0$.
Since, by Lemma~\ref{lem:sumK}, we have $K_1+\ldots+K_n=1$,  removing all pairs
$(K_i, K_{i+1})$ with $K_i<0$, will leave us with a tuple of non-negative integers
$(\wt{K_1}, \ldots, \wt{K_s})$ (recall that $K_n\geq 0$) such that $\wt{K_1}+\ldots+\wt{K_s}\leq 1$.
Therefore at most one of them can be nonzero. Also, since the removed pairs
$(K_i,K_{i+1})$ satisfy $0\le K_i+K_{i+1}\leq 1$, for all but at most one of these pairs we
have $K_i+K_{i+1}=0$, and so $(K_i, K_{i+1})= (-1, 1)$ by Lemma \ref{key-corollary}.
If $K_i+K_{i+1}=1$, then from \eqref{key_equation} it follows that $(K_i,K_{i+1})$ must be
$(-1, 2)$ or $(-2, 3)$.
\end{proof}

The correlator $X$ in \eqref{C2} can be reconstructed with the correlators of the
following much simpler form.
\begin{prop}\label{chain recon}
We can reconstruct correlators in \eqref{C2} from correlators of the form
\begin{equation}\label{C3}
  X=\<\underbrace{\th_n, \ldots, \th_n}_{\ell_n\ \text{copies}}, \alpha, \beta\>.
\end{equation}
\end{prop}
\begin{proof}
Starting with a correlator $X$ in \eqref{C2} not in \eqref{C3}, we can choose $i$ to be
the largest index with $i < n$ and $\ell_i \geq 1$.
More precisely, $X$ is of the following form:
$$X=\<\th_i, \bth_S, \alpha, \beta\>, \, i<n,$$
where $\bth_S$
is a tuple consisting of $\th_j$s with $j=n$ or $j\leq i$.
Now it is sufficient to prove that using
\eqref{reconst_eq},  $X$ can be reconstructed from correlators with fewer insertions
and correlators of the form
  $$Z=\<\th_j, \th_S, \alpha^\prime, \beta^\prime\>, \, j>i.$$
Here the set $\bth_S$ in $Z$ is the same as that in $X$, but $\alpha^\prime$,
$\beta^\prime$ can be different form $\alpha$ and $\beta$ in $X$.

Notice that
        $X = \< \th_S, \th_i, \th_n^{p_n}\widetilde{\alpha}, \beta \>$ for some $p_n\geq0$.
If $p_n\geq1,$ then we apply \eqref{reconst_eq} with $\ga = \beta$, $\de = \th_i$,
$\epsilon = \th_n$, and $\phi = \th_n^{p_n-1} \widetilde{\alpha}$.
The correlators with $\de \phi$ and $\de \ga$ are of the form:
$$
\<\th_S, \th_n,  \th_i\th_n^{p_n-1} \widetilde{\alpha}, \beta\>,\quad \<\th_S, \th_n,  \th_n^{p_n-1}
\widetilde{\alpha}, \th_i\beta\>.
$$
They are both of the form $Z$. The correlator with $\epsilon  \ga$ equals $\langle\th_S,
\th_i, \th_n^{p_n-1} \alpha, \th_n  \beta \rangle$.
By induction we can reconstruct $X$ from $Z$ and the correlator $Y = \langle  \th_S,
\th_i, \alpha_Y, \beta_Y \rangle$ where $p^Y_n=0$.

Similarly, we move all $x_{n-1}$ from $\alpha_Y$ to $\beta_Y$, and so on, until we move
all $\th_{i+1}$ from $\alpha$ to $\beta$.
Thus we reconstruct $X$ from correlators $Z$, and the correlator $Y = \langle\ldots,
\th_i, \alpha_Y, \beta_Y \rangle$ where $p^Y_{i+1} = \ldots = p^Y_n = 0$.

After reducing to the basis listed in Table \ref{table-basis}, $Y$ satisfies $p^Y_{k} +
q^Y_{k} \leq a_k-1$ for $k> i$. By Lemma \ref{key-corollary}, we have $K_n^Y\geq 0$ in
$Y$. In the following argument, we focus on the reconstruction of $Y$, and drop the
superscript $Y$ on $K$, $\bp$ and $\bq$.

\

\noindent{\bf{Case $K_n=1$:}}
In this case $\bK$ is a concatenation of (0)s and (-1,1)s, followed by $K_n=1$.

If $\bK = (\ldots, -1,1,1)$, then $\bl =( \ldots, 0, 0, *)$ and
$\bp + \bq = (\ldots, 2 a_{n-2}-2, *, *)$
by \eqref{mplusn_eq}.
Then $n-2>i$, but $p_{n-2} + q_{n-2} \geq a_{n-2}$, contradicting our assumption $p_{k} + q_{k}\leq a_k-1$  on $Y$.
Similarly, we reach a contradiction if there is $j > i$ such that $(K_j, K_{j+1}) = (-1,1)$.

Therefore, $\bK = ( \ldots, \underline{0},0, \ldots, 0,1)$ and $\bl = (\ldots, \underline{1}, 0, \ldots,0,*)$,
where the underline marks the $i^{th}$ spot and $\ell_i=1$ by \eqref{eq2}.
Possibly, $i=n-1$.
If $i \neq n-1$, then by assumption $(K_{i+1}, \ell_{i+1}) = (0,0)$ so $p_i+q_i = 2 a_i-2$ by \eqref{mplusn_eq}.
If $i=n-1$, then $(K_{i+1}, \ell_{i+1}) = (K_n,\ell_n)$ where $\ell_n \geq K_n$. Then
(\ref{mplusn_eq}) shows $p_i + q_i \geq 2 a_i-2$ so by
\eqref{eq:ineq} we know that
$p_i + q_i = 2 a_i-2$.
Thus
$\bp + \bq = ( \ldots, \underline{2 a_i-2}, *, \ldots, *, *).$
Now we have three cases. In each case we compute $\bp+\bq$ by first using \eqref{eq2} to
compute $\ell$ and then using (\ref{mplusn_eq}),
\eqref{basis-assumption}, and Lemma~\ref{key-corollary}.
\begin{enumerate}
\item
$\bK=(  \ldots, 0, \underline{0}, \ldots, 1),$
$\bp+\bq=(\ldots, a_{i-1}, \underline{M}, \ldots, *).$
\item
$\bK=(  -1, 1, \ldots, -1, 1, \underline{0}, \ldots, 1),$
$\bp+\bq=(M, 0, \ldots, M, 0, \underline{M}, \ldots, *).$
\item
$\bK=( \ldots, 0,-1, 1, \ldots, -1, 1, \underline{0}, \ldots, 1),$
$\bp+\bq=( \ldots, a_r, M, 0, \ldots, M, 0, \underline{M}, \ldots,* ).$
\end{enumerate}
Here  $M = 2 a-2$ with the appropriate subscripts.
In each case, we claim that there is $\widehat{\alpha}\in \cHw$ that satisfying
$\alpha=\th_{i+1}^{a_{i+1}}\bullet \widehat{\alpha}$.
We find a factor of $\th_{i+1}^{a_{i+1}}$ in $\alpha$ for each case as follows:
\begin{enumerate}
\item Here $\alpha$ has a factor of $\th_{i-1} \th_{i}^{a_i-1}$, which by \eqref{loop-relation}
is proportional to
  $\th_{i+1}^{a_{i+1}}$ in $\cHw$.
\item Repeatedly apply \eqref{loop-relation} starting with $a_1 \th_1^{a_1-1} = - \th_2^{a_2}$.
\item Repeatedly apply \eqref{loop-relation} starting with $a_{r+1} \th_r
  \th_{r+1}^{a_{r+1}-1} = - \th_{r+2}^{a_{r+2}}$.
\end{enumerate}

Now apply \eqref{reconst_eq} to $Y$ with
$\ga = \th_i$, $\de = \beta$, $\epsilon =
\th_{i+1}^{a_{i+1}}$, and $\phi = \widehat{\alpha}$.
Then $\ga  \epsilon=\th_i\th_{i+1}^{a_{i+1}}$ vanishes by
Corollary~\ref{vanishing}
and the other two correlators have the form
$\langle \bth_S, \th_{i+1}^{a_{i+1}}, \alpha', \beta' \rangle$.
Writing
$\th_{i+1}^{a_{i+1}}=\th_{i+1}\bullet\th_{i+1}^{a_{i+1}-1}$, and
performing reconstruction scheme similar to the one in the proof of Lemma \ref{Step 1},
we will obtain the correlator of the required form.

\

\noindent{\bf{Case $K_n=0$.}} \
In this case, \eqref{k_n} implies that there are three possibilities:
$\ell_n=0$; $\ell_n=1$; or $\ell_n = 2, a_n=2$.
Using~\eqref{pq_n} we see that the cases $\ell_n=a_n=2$ and $\ell_n=1, a_n\geq 3$
contradict our assumption that $p_n+q_n\leq a_n-1$. So it only remains to consider the
cases $\ell_n=0$ and $\ell_n=1$, $a_n=2$.

Let us first consider the case $\ell_n=1$, $a_n=2$. By \eqref{pq_n}, we have
$p_n+q_n=1=a_n-1$. If $p_{n-1}+q_{n-1}> 0$, we assume without loss of generality that
$p_n=1$ and $q_{n-1}>0$ and write $\alpha=\th_n\bullet \alpha'$.
Applying \eqref{reconst_eq} to $Y$ with $\ga = \th_i$, $\de = \beta$, $\epsilon = \th_n$
and $\phi = \alpha'$, we can obtain the required correlator, since $\th_{n-1}\th_n=0$.
If $p_{n-1}+q_{n-1}=0$, equations \eqref{mplusn_eq} and \eqref{eq2}
imply that $K_{n-1}=1$ and $\ell_{n-1}=0$.
Let $i<n-1$  be the largest subscript such that $\ell_i\neq 0$.

There are three cases:%
\footnote{In \cite{HLSW}, these three cases have not been considered.
However, this is not a serious issue, since the main result of \cite{HLSW} does not
include the case $a_n=2$.}
\begin{enumerate}
\item
$\bK=(  \ldots, 0, \underline{0}, \ldots, 1, 0),$
$\bp+\bq=(\ldots, a_{i-1}, \underline{M}, \ldots, 0, 1).$
\item
$\bK=(  -1, 1, \ldots, -1, 1, \underline{0}, \ldots, 1, 0),$
$\bp+\bq=(M, 0, \ldots, M, 0, \underline{M}, \ldots, 0, 1).$
\item
$\bK=( \ldots, 0,-1, 1, \ldots, -1, 1, \underline{0}, \ldots, 1, 0),$
$\bp+\bq=( \ldots, a_r, M, 0, \ldots, M, 0, \underline{M}, \ldots, 0, 1).$
\end{enumerate}
The discussion is similar to the case $K_n=1$.

Now assume $\ell_n=0$, and let $i$ be the largest subscript
such that $(K_i, \ell_i) \neq (0,0)$.
Since $0 \leq p_i + q_i$, equation (\ref{mplusn_eq}) shows $K_i \leq \ell_i$.
 Then \eqref{eq2} shows that $(K_{i-1}, K_i)$ cannot be $(-2,3)$, $(-1,2)$, or $(-1,1)$.
Six cases remain, and the reconstruction can be completed using the strategy analogous to the $K_n=1$ case
 (or see \cite{HLSW} page 47).
\end{proof}

Furthermore, we have
\begin{lem}
We can reconstruct correlators in \eqref{C3} from correlators of the form
\begin{equation}\label{C4}
  X=\<\th_n, \th_n, \alpha, \beta\>.
\end{equation}
\end{lem}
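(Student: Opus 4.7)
The plan is to induct on $\ell_n$. The base case $\ell_n = 2$ is tautological, since the correlator is already of the form \eqref{C4}, and the cases $\ell_n \in \{0,1\}$ give correlators with at most three insertions, which are determined by the Frobenius algebra structure of Theorem~\ref{ring-iso} together with the string equation. Assume now $\ell_n \ge 3$ and that the statement holds for every smaller value of $\ell_n$.

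The main step is to apply WDVV (Lemma~\ref{WDVV}) to remove one $\th_n$ insertion. Since $\alpha \in \cHw$ is a standard monomial in the generators, either $\alpha = \one$ (in which case the string equation immediately reduces $X$ to a correlator with one fewer insertion, handled by an outer induction on the number of insertions) or we may factor $\alpha = \th_j \bullet \alpha'$ for some index $j$. Arrange $X = \langle \th_n^{\ell_n - 1}, \th_n, \beta, \th_j \bullet \alpha'\rangle$ and apply \eqref{reconst_eq} with $\ga = \th_n$, $\de = \beta$, $\epsilon = \th_j$, $\phi = \alpha'$. This expresses $X$ as a signed sum of three correlators with $\ell_n + 2$ insertions, modulo a term $S$ involving correlators with strictly fewer insertions. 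Among the three main terms, the one with $\ga \bullet \epsilon = \th_n \bullet \th_j$ as a single insertion carries only $\ell_n - 1$ separate $\th_n$ insertions. Expanding $\th_n \bullet \th_j$ in the standard basis via Proposition~\ref{additive-relation}, this becomes a finite sum of correlators of the form \eqref{C2}; by the Corollary preceding the lemma, each such correlator can in turn be rewritten in the form \eqref{C3} with at most $\ell_n - 1$ copies of $\th_n$, which fall under the inductive hypothesis.

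The two remaining WDVV terms still carry $\ell_n$ copies of $\th_n$, together with an extra insertion ($\th_j$ or $\th_n \bullet \beta$) that prevents them from sitting directly in the form \eqref{C3}. To handle these, one invokes Proposition~\ref{chain recon} (and its Corollary) to absorb the extra generator back into $\alpha, \beta$, producing correlators of the form \eqref{C3} to which the same WDVV step can be reapplied. The principal obstacle is termination: one must ensure that the iteration WDVV $\to$ Corollary $\to$ Proposition~\ref{chain recon} actually decreases $\ell_n$ after finitely many passes rather than cycling. This requires a careful choice of the factorization $\alpha = \th_j \bullet \alpha'$ (taking instead a factor of $\beta$ when $\alpha = \one$, and preferring $j < n$ when available), and crucial use of the chain-type Jacobian relations of Proposition~\ref{injectiveness}—in particular $\th_{n-1} \bullet \th_n^{a_n - 1} = 0$ and $\th_n^{a_n} = -a_{n-1}\th_{n-2} \bullet \th_{n-1}^{a_{n-1}-1}$—to kill or combinatorially simplify the stubborn terms. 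The detailed bookkeeping parallels the reduction scheme for FJRW invariants in Section~6 of~\cite{HLSW}, with the FJRW invariants replaced throughout by MF correlators.
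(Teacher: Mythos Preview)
Your approach and the paper's diverge sharply. The paper performs no inductive WDVV reduction here at all; instead it gives a short direct argument that a nonzero correlator of the form \eqref{C3} with $\ell_n \ge 3$ cannot exist. From \eqref{k_n} with $\ell_n \ge 2$ one obtains $K_n \ge (a_n-2)/a_n$, so $K_n \ge 0$, and then Lemma~\ref{loop_condition_lemma} forces $K_n \in \{0,1\}$. Feeding this back into \eqref{k_n} bounds $\ell_n \le 2$ except for the borderline possibilities $(\ell_n,a_n) \in \{(3,3),(3,2),(4,2)\}$; in each of those one checks directly that the resulting $\bp+\bq$ forces a factor $\th_{n-1}\th_n^{a_n-1}$ in $\alpha$ or $\beta$, contradicting \eqref{basis-assumption}. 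So when $\ell_n > 2$ there is simply nothing to reconstruct.

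Your argument has a genuine gap precisely where you flag it: termination. After one WDVV move the term $\langle \th_n^{\ell_n-1}, \th_n, \th_j, \beta\bullet\alpha'\rangle$ still carries $\ell_n$ separate copies of $\th_n$ together with a stray $\th_j$; passing it back through Proposition~\ref{chain recon} and the Corollary to regain the shape \eqref{C3} can return a correlator with the same $\ell_n$, and you exhibit no monovariant that strictly decreases around this loop. Invoking \cite{HLSW} does not close the gap, since the reconstruction scheme there is organised around exactly the numerical constraints \eqref{mplusn_eq}--\eqref{key_equation} that you do not use, not around an induction on $\ell_n$. The Jacobian relation $\th_n^{a_n} = -a_{n-1}\,\th_{n-2}\bullet\th_{n-1}^{a_{n-1}-1}$ you cite acts on a \emph{product} of $\th_n$'s, not on separate insertions, so it cannot be applied directly to lower $\ell_n$; rather, it is the mechanism behind the paper's observation that large $\ell_n$ forces $\alpha$ or $\beta$ out of the standard basis.
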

\begin{proof}
Now we focus on correlator $X$ in \eqref{C3}.
From (\ref{k_n}), since $\ell_n \geq 2$, we find
\[
K_n \geq {a_n-2\over a_n}.
\]
Thus $K_n \geq 0$ and equality is possible only if $a_n=2$.

If $K_n=0$ and $a_n=2$, then \eqref{k_n} shows that $\ell_n\leq 2$.

If $K_n \neq 0$, then $K_n=1$ by Lemma \ref{loop_condition_lemma}.
Then (\ref{k_n}) shows $\ell_n \leq 2a_n/(a_n-1)$, so $\ell_N=2$ , or $\ell_n=a_n=3$, or
$a_n=2$ and $\ell_n=3$ or $4$.
We will show that in each case where $\ell_n>2$, the correlator does not satisfy
\eqref{basis-assumption}, a contradiction.

If $\ell_n=a_n=3$, then $p_n + q_n= 3 a_n - 5 = 2 a_n-2$.
Either $(K_{n-1}, \ell_{n-1})$ is $(0,0)$ or it is $(1,0)$; in each case, $p_{n-1} + q_{n-1} \geq 1$.
Without loss of generality $p_{n-1} \geq 1$, so that $\alpha$ has a factor of $\th_{n-1}
\th_n^{a_n-1}$, violating \eqref{basis-assumption}.

Similarly, if $a_n=2$ and $\ell_n=3$ or 4, we can check all possibilities for $\bK$ and
$\bl$ and show that $\bp+\bq$ violates \eqref{basis-assumption}.
\end{proof}

Finally, we only need to show
\begin{lem}
We can reconstruct correlators in \eqref{C4} from correlators of the form
\begin{equation}
  X=\<\th_n, \th_n, \th_{n-1}\th_n^{a_n-2}, \th_{J^{-1}}\>
\end{equation}
\end{lem}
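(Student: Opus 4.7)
The plan is to use the WDVV reconstruction Lemma~\ref{WDVV} to iteratively transform a general correlator $X = \langle \th_n, \th_n, \alpha, \beta\rangle$ of the form \eqref{C4} into the canonical $\fF_n = \langle \th_n, \th_n, \th_{n-1}\th_n^{a_n-2}, \th_{J^{-1}}\rangle$. Writing $\alpha = \prod_i \th_i^{p_i}$ and $\beta = \prod_i \th_i^{q_i}$ as standard basis monomials, I would first apply the nonvanishing conditions of Proposition~\ref{nonvanishing} via \eqref{b_def}--\eqref{key_equation} together with Lemma~\ref{loop_condition_lemma}. With $\ell_n = 2$, the inequality \eqref{k_n} gives $K_n = 1$ when $a_n \geq 3$ and $K_n \in \{0,1\}$ when $a_n = 2$, and a positivity check on each $p_i + q_i$ shows that the admissible $\bK$'s take the shape $(0,\ldots,0,(-1,1)^s,1)$ for some $s \geq 0$.

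In the base case $s = 0$, the constraints force $p_n = q_n = a_n - 2$, $p_i + q_i = a_i - 1$ for $i \leq n-2$, and $p_{n-1} + q_{n-1} = a_{n-1}$ with $p_{n-1}, q_{n-1} \geq 1$. Assuming without loss of generality that $p_{n-1} \leq q_{n-1}$, for each $i \leq n-2$ with $p_i > 0$ I would apply Lemma~\ref{WDVV} with $\gamma = \th_n$, $\delta = \beta$, $\epsilon = \th_i$, $\phi = \alpha/\th_i$; the three correlators on the right-hand side of \eqref{reconst_eq} all carry a strictly smaller value of $\sum_{i \leq n-2} p_i$ after rewriting any non-standard insertion via the Jacobian relations of Proposition~\ref{injectiveness}. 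An analogous step then reduces $p_{n-1}$ down to $1$, at which point the surviving nonvanishing constraints uniquely determine $\beta = \th_{J^{-1}}$, yielding $\fF_n$. Configurations with $s \geq 1$ contain maximal factors $\th_i^{a_i - 1}$ inside $\alpha$ (and $\beta$), which I would trade for $\th_{i+1}^{a_{i+1}}$ via \eqref{loop-relation}, migrating the excess toward $n$ where it is annihilated by $\th_{n-1} \bullet \th_n^{a_n - 1} = 0$ from \eqref{chain-last}. The exceptional case $K_n = 0$, $a_n = 2$ is dispatched by a single WDVV application combined with $\th_{n-1} \bullet \th_n = 0$.

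The hard part will be the case-by-case bookkeeping: at each WDVV step one must verify that the products $\th_i \bullet \alpha$, $\th_i \bullet \beta$, and $\th_n \bullet \beta$ either remain standard basis monomials or are rewritten in the standard basis using Proposition~\ref{injectiveness}, and that the chosen complexity measure (here $\sum_{i \leq n-2} p_i$, then $p_{n-1}$, then $s$) is strictly decreased. The pattern closely parallels the reconstruction for FJRW correlators carried out in~\cite[Section 6]{HLSW}, which transfers directly to our setting upon replacing FJRW invariants with MF invariants.
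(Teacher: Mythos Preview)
Your overall strategy---classify the admissible $\bK$ via the constraints \eqref{mplusn_eq}--\eqref{key_equation}, then use WDVV together with the Jacobian relations of Proposition~\ref{injectiveness}---matches the paper, and your description of the $\bK$-patterns $(0,\ldots,0,(-1,1)^s,1)$ is essentially the paper's three cases. The handling of $s\ge 1$ via \eqref{loop-relation} is also what the paper does.

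Where you diverge is in the endgame. For the base case $s=0$ you propose an \emph{iterative} WDVV scheme, peeling off one $\th_i$ at a time with $\gamma=\th_n$, $\delta=\beta$, $\epsilon=\th_i$, $\phi=\alpha/\th_i$ and tracking the measure $\sum_{i\le n-2}p_i$. The difficulty is that each such step produces correlators like $\langle\th_n,\th_n,\th_i,\beta\phi\rangle$ and $\langle\th_n,\th_n\th_i,\beta,\phi\rangle$ which are no longer of the form \eqref{C4}: one has a bare $\th_i$ insertion, the other has only a single $\th_n$. To continue you would have to feed these back through Proposition~\ref{chain recon}, and it is not clear your complexity measure survives that round trip. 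This is the termination worry you flag yourself, and it is real.

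The paper avoids this entirely. After the $\bK$-analysis (and after using \eqref{loop-relation} in the $s\ge1$ cases to rewrite $\alpha$), every nonzero correlator is shown to have the uniform shape
\[
X=\langle\th_n,\th_n,\ \th_1^{p_1}\cdots\th_{n-1}^{p_{n-1}}\th_n^{a_n-2},\ \th_1^{q_1}\cdots\th_{n-1}^{q_{n-1}}\th_n^{a_n-2}\rangle
\]
with $p_i+q_i=a_i-1$ for $i\le n-2$, $p_{n-1}+q_{n-1}=a_{n-1}$, and in particular $p_{n-1},q_{n-1}\ge 1$. Then a \emph{single} application of \eqref{reconst_eq} with $\gamma=\th_n$, $\epsilon=\th_{n-1}\th_n^{a_n-2}$, $\phi=\th_1^{p_1}\cdots\th_{n-1}^{p_{n-1}-1}$, $\delta=\beta$ finishes: both $\gamma\epsilon$ and $\gamma\delta$ acquire a factor $\th_{n-1}\th_n^{a_n-1}$ and vanish by \eqref{chain-last}, while the surviving term has $\delta\phi=\th_{J^{-1}}$, giving $\fF_n$ directly. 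The trick is to split off the \emph{block} $\th_{n-1}\th_n^{a_n-2}$ rather than a single $\th_i$; this is what makes two of the three WDVV outputs die and removes any need for a descending induction.
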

\begin{proof}
Let $X$ be a correlator in \eqref{C4}
We know $\bl = (0, \ldots, 0, 2)$. By \eqref{mplusn_eq}-\eqref{key_equation}, if $M =
2a-2$, we have three possibilities for $\bK$:
\begin{enumerate}
\item
$\bK = (0, \ldots, 0, 0, 1),$ $\bp+\bq=(a_1-1, \ldots, a_{n-2}-1, a_{n-1}, 2a_n-4)$.

\item
$\bK=(-1,1, \ldots, -1,1, 1),$ $\bp+\bq = (M, 0, \ldots, M, 0, 2a_n-4)$.

\item
$\bK=(0, \ldots, 0, 0, -1, 1, \ldots, -1, 1, 1)$, $\bp+\bq = (a_1-1, \ldots, a_{r-1}-1,
a_r, M, 0, \ldots,  M,$ $0, 2a_n-4)$.
\end{enumerate}

In all cases, if $X\neq0$, we must have
\begin{equation}\label{chain-middle}
X = \< \th_n, \th_n, \th_1^{p_1} \ldots \th_{n-1}^{p_{n-1}}\th_n^{a_n-2}, \th_1^{q_1}
\ldots \th_{n-1}^{q_{n-1}} \th_n^{a_n-2}\>
\end{equation}
where $p_i + q_i = a_i-1$ for $i\leq n-2$ and $p_{n-1} + q_{n-1} = a_{n-1}$.

In the first case, both $p_{n-1}$ and $q_{n-1}$ are at least 1. If $p_{n} = a_n-1$, then
$X=0$ by \eqref{chain-last}, since it has a factor of $\th_{n-1}\th_n^{a_n-1}$. This shows
that $p_n = q_n = a_n-2$ and \eqref{chain-middle} follows.

In the second case, $\alpha = \th_1^{a_1-1} \th_3^{a_3-1} \ldots \th_{n-2}^{a_{n-2}-1} \th_n^{p_n}$.
The relations \eqref{loop-relation} show
$$\alpha \propto \th_2^{a_2-1}\th_4^{a_4-1} \ldots \th_{n-1}^{a_{n-1}} \th_n^{p_n}.$$
If $p_n = a_n-1$, we have a factor of $\alpha$ equal to $\th_{n-1} \th_n^{a_n-1}$, and
$\alpha=0$ by \eqref{chain-last}.
Otherwise, $p_n=q_n=a_n-2$ and \eqref{chain-middle} follows.

In the last case,  $\alpha$ has a factor equal to $\th_{r}^{p_r} \th_{r+1}^{a_{r+1}-1}
\ldots \th_{n-2}^{a_{n-2}-1} \th_n^{p_n}$. As before we use the relations
\eqref{loop-relation} to rewrite it as $\th_r^{p_r-1}\th_{r+2}^{a_{r+2}-1} \ldots
\th_{n-1}^{a_{n-1}} \th_n^{p_n}$. As before, if $X\neq0$, then \eqref{chain-middle}
follows.

Finally, we apply \eqref{reconst_eq} to $X$ in \eqref{chain-middle} with $\ga = \th_n$,
$\epsilon = \th_{n-1}\th_n^{a_n-2}$, $\phi = \th_1^{p_1} \ldots \th_{n-1}^{p_{n-1}-1} $,
and $\de =  \th_1^{q_1} \ldots \th_{n-1}^{q_{n-1}} \th_n^{a_n-2} $. Then $\epsilon  \ga $
and $\ga  \de$ have a factor of $\th_{n-1} \th_n^{a_n-1}$, and hence both are 0 by
\eqref{chain-last}.
The remaining term containing $\de\phi$ is exactly $\fF_n$.
\end{proof}

\noindent{\small Department of Mathematics, Sun Yat-sen University, Guangzhou, Guangdong 510275, China}
\quad
\noindent{\small E-mail: \tt hewq@mail2.sysu.edu.cn}

\noindent{\small Department of Mathematics, University of Oregon, Eugene, OR 97403, USA;
 National Research University Higher School of Economics; and Korea Institute for  Advanced Study}

\noindent{\small E-mail: \tt apolish@uoregon.edu}

\noindent{\small Department of Mathematics, University of Oregon, Eugene, OR 97403, USA}

\noindent{\small E-mail: \tt yfshen@uoregon.edu}

\noindent{\small Department of Mathematics, University of Oregon, Eugene, OR 97403, USA}

\noindent{\small E-mail: \tt vaintrob@uoregon.edu}

\end{document}